\def\titlerunning#1{\gdef\titrun{#1}}
\def\author#1{\gdef\autrun{\def\and{\unskip, }#1}\gdef\@author{#1}}
\def\address#1{{\def\and{\\\hspace*{18pt}}\renewcommand{\thefootnote}{}%
\footnote {#1}}%
\markboth{\autrun}{\titrun}}
\def\email#1{e-mail: #1}
\def\subjclass#1{{\renewcommand{\thefootnote}{}%
\footnote{\emph{Mathematics Subject Classification (2010):} #1}}}
\def\keywords#1{\par\medskip
\noindent\textbf{Keywords.} #1}
\newtheorem{thm}{Theorem}[section]
\newtheorem{Lemma}[thm]{Lemma}
\newtheorem{Proposition}[thm]{Proposition}
\newtheorem{Theorem}[thm]{Theorem}
\newtheorem{Corollary}[thm]{Corollary}
\newtheorem*{nntheorem}{Theorem}
\newtheorem{lemma}[thm]{Lemma}
\newtheorem{proposition}[thm]{Proposition}
\newtheorem{theorem}[thm]{Theorem}
\newtheorem{corollary}[thm]{Corollary}
\theoremstyle{definition}
\newtheorem{definition}[thm]{Definition}
\newtheorem{remark}[thm]{Remark}
\newtheorem{example}[thm]{Example}
\numberwithin{equation}{section}
\newenvironment{tabsection}{}{}
\newcommand{\blabel}[1]{\setlength\fboxrule{.0em}\fbox{\ensuremath{\scriptstyle #1}}}
\newcommand{\op}[1]{\ensuremath{\operatorname{#1}}}
\newcommand{\wt}[1]{\ensuremath{\widetilde{#1}}}
\newcommand{\wh}[1]{\ensuremath{\widehat{#1}}}
\newcommand{\ol}[1]{\ensuremath{\overline{#1}}}
\newcommand{\ul}[1]{\ensuremath{\underline{#1}}}
\newcommand{\cA}{\ensuremath{\mathcal{A}}}
\newcommand{\cC}{\ensuremath{\mathcal{C}}}
\newcommand{\cE}{\ensuremath{\mathcal{E}}}
\newcommand{\cF}{\ensuremath{\mathcal{F}}}
\newcommand{\cG}{\ensuremath{\mathcal{G}}}
\newcommand{\cH}{\ensuremath{\mathcal{H}}}
\newcommand{\cK}{\ensuremath{\mathcal{K}}}
\newcommand{\cL}{\ensuremath{\mathcal{L}}}
\newcommand{\cP}{\ensuremath{\mathcal{P}}}
\newcommand{\cT}{\ensuremath{\mathcal{T}}}
\newcommand{\cU}{\ensuremath{\mathcal{U}}}
\newcommand{\cX}{\ensuremath{\mathcal{X}}}
\newcommand{\fz}{\ensuremath{\mathfrak{z}}}
\newcommand{\fg}{\ensuremath{\mathfrak{g}}}
\newcommand{\dd}{\op{\tt{d}}}
\newcommand{\R}{\ensuremath{\mathbb{R}}}
\newcommand{\N}{\ensuremath{\mathbb{N}}}
\newcommand{\Z}{\ensuremath{\mathbb{Z}}}
\newcommand{\id}{\ensuremath{\operatorname{id}}}
\newcommand{\pr}{\ensuremath{\operatorname{pr}}}
\newcommand{\ev}{\ensuremath{\operatorname{ev}}}
\newcommand{\ad}{\ensuremath{\operatorname{ad}}}
\newcommand{\cat}[1]{\ensuremath{\mathsf{\mathop{#1}}}}
\newcommand{\Aut}{\ensuremath{\operatorname{Aut}}}
\newcommand{\Hom}{\ensuremath{\operatorname{Hom}}}
\newcommand{\im}{\ensuremath{\operatorname{im}}}
\newcommand{\tx}[1]{\ensuremath{\text{#1}}}
\newcommand{\per}{\ensuremath{\operatorname{per}}}
\newcommand{\se}{\ensuremath{\nobreak\subseteq\nobreak}}
\newcommand{\from}{\ensuremath{\nobreak\colon\nobreak}}
\renewcommand{\to}{\ensuremath{\nobreak\rightarrow\nobreak}}
\newcommand{\rrarrow}{\hspace{.05cm}\mbox{\put(0,-2){
$\rightarrow$}\put(0,2){ $\rightarrow$}\hspace{.5cm}}}
\newcommand{\rrrarrow}{\hspace{.05cm}\mbox{\,\put(0,-3){
$\rightarrow$}\put(0,1){ $\rightarrow$}\put(0,5){
$\rightarrow$}\hspace{.5cm}}}
\newcommand{\rrrrarrow}{\mbox{\,\put(0,-3){ $\rightarrow$}\put(0,.5){
$\rightarrow$}\put(0,4){ $\rightarrow$}\put(0,7.5){
$\rightarrow$}\hspace{.45cm}}}
\renewcommand{\rrarrow}{\rightrightarrows}
\newcommand{\bB}{\ensuremath{\mathbf{B}}}
\newcommand{\highlight}[1]{{#1}}
\newcommand{\Per}[1]{\ensuremath{\im(\per_{#1})}}
\newcommand{\disc}[1]{\ensuremath{\underline{#1}}}
\newcommand{\locsm}[1]{\ensuremath{\wt{#1}}} \newcommand{\emptycomment}[1]{}
\newcommand{\cccomment}[1]{} 
\renewcommand{\dd}{\op{\tt{d}}_{\op{gp}}}
\newcommand{\dsimp}{\ensuremath{\op{\delta}_{\op{gp}}}}
\newcommand{\xmod}{\ensuremath{A\xrightarrow{\xmodmap}B}}
\newcommand{\xmoda}{\ensuremath{A}} \newcommand{\xmodb}{\ensuremath{B}}
\newcommand{\xmodmap}{\ensuremath{\mu}}
\newcommand{\psing}{\op{\partial}_{\op{sing}}}
\newcommand{\aone}[1]{\ensuremath{\locsm{\alpha}}\ifthenelse{\equal{#1}{}}{}{\ensuremath{({#1})}}}
\newcommand{\aoneprime}[1]{\ensuremath{\locsm{\alpha}'}\ifthenelse{\equal{#1}{}}{}{\ensuremath{({#1})}}}
\newcommand{\Aone}[1]{\ensuremath{\locsm{A}}\ifthenelse{\equal{#1}{}}{}{\ensuremath{({#1})}}}
\newcommand{\takeCAREofTHIS}{\ensuremath{\locsm{\alpha}}({i^{-1}g})}
\newcommand{\atwo}[1]{\ensuremath{\locsm{\beta}}\ifthenelse{\equal{#1}{}}{}{\ensuremath{({#1})}}}
\newcommand{\atwoprime}[1]{\ensuremath{\locsm{\beta}'}\ifthenelse{\equal{#1}{}}{}{\ensuremath{({#1})}}}
\newcommand{\cd}{\check{\delta}} \newcommand{\bt}{\mathbf{t}} %
\newcommand{\bs}{\mathbf{s}} %
\newcommand{\bid}{\mathbf{id}} %
\renewcommand{\cU}{\mathcal{U}}
\newcommand{\tgp}{Lie 2-group\xspace}
\newcommand{\tgps}{Lie 2-groups\xspace}
\newcommand{\g}{\ensuremath{\frak{g}}} 
\newcommand{\z}{\ensuremath{\frak{z}}}
\newcommand{\tphi}{\wt{\phi}}
\newcommand{\sSt}{\ensuremath{\cat{sSt}}}
\newcommand{\eSt}{\ensuremath{\cat{\acute{e}St}}}
\begin{document}

%%%%% To ease editing, add:

\baselineskip=17pt

%%%%%%%%%%%%%%%%

%% In the running head, give an abbreviation of the title. 
\titlerunning{Integrating central extensions}

\title{Integrating central extensions of Lie algebras via
  Lie 2-groups}

\author{Christoph Wockel
\and 
Chenchang Zhu}

\date{}

\maketitle

\address{C. Wockel: Fachbereich Mathematik, Universit\"at Hamburg (Germany);  \email{christoph@wockel.eu}
\and
C. Zhu: Mathematisches Institut, Universit\"at G\"ottingen (Germany); \email{chenchang.zhu@gmail.com}}

\subjclass{Primary 22E65; Secondary 58H05, 58B25, 55N20}

%%%%%%%%

\begin{abstract}
 The purpose of this paper is to show how central extensions of (possibly
 infinite-dimensional) Lie algebras integrate to central extensions of \'etale
 \tgps in the sense of \cite{Getzler09Lie-theory-for-nilpotent-Linfty-algebras, Henriques08Integrating-Lsb-infty-algebras}. In finite dimensions, central extensions of Lie algebras integrate to
 central extensions of Lie groups, a fact which is due to the vanishing of
 $\pi_{2}$ for each finite-dimensional Lie group. This fact was used by
 Cartan (in a slightly other guise) to construct the simply connected Lie
 group associated to each finite-dimensional Lie algebra.
 
 In infinite dimensions, there is an obstruction for a central extension of Lie
 algebras to integrate to a central extension of Lie groups. This obstruction
 comes from non-trivial $\pi_{2}$ for general Lie groups. We show that this
 obstruction may be overcome by integrating central extensions of Lie algebras
 not to Lie groups but to central extensions of \'etale \tgps. As an
 application, we obtain a generalization of Lie's Third Theorem to
 infinite-dimensional Lie algebras.

%% Keywords are optional
\keywords{Infinite-dimensional Lie group, central extension, smooth group cohomology, group stack, Lie 2-group, integration of cocycles, Lie's Third Theorem, 2-connected cover}
\end{abstract}

\setcounter{tocdepth}{2} \tableofcontents

\section{Introduction}\label{sect:integrating_central_extensions_of_lie_algebras}

\begin{tabsection}
 Central extensions of Lie algebras and their integrability are closely related
 to Lie's Third Theorem. In fact, one can use the integration theory of central
 extensions of Lie algebras
 \cite{Neeb02Central-extensions-of-infinite-dimensional-Lie-groups} to decide
 (under some mild requirements) whether a given Lie algebra is the Lie algebra
 of a Lie group. In finite dimensions, this is always the case due to the
 vanishing of $\pi_{2}$ for each finite-dimensional Lie group. In infinite
 dimensions, $\pi_{2}$ does not always vanish and leads to Lie algebras which
 do not integrate to Lie groups
 \cite{EstKorthagen64Non-enlargible-Lie-algebras,DouadyLazard66Espaces-fibres-en-algebres-de-Lie-et-en-groupes}.
 A similar phenomenon occurs when integrating finite-dimensional Lie algebroids
 to Lie groupoids
 \cite{Pradines68Troisieme-theoreme-de-Lie-les-groupoi-des-differentiables,CrainicFernandes03Integrability-of-Lie-brackets}.
 In this case it is $\pi_{2}$ of the leaves that restricts the integrability of a
 Lie algebroid.

 On the other hand, the theory of higher Lie group(oid)s has been much
 developed recently. Already in the early twentieth century, 2-groups were
 studied by Whitehead and his followers under various terms, such as crossed
 modules. They are also studied from the perspective of ``gr-champ'' (i.e.
 stacky groups) by Breen \cite{Breen90Bitorseurs-et-cohomologie-non-abelienne}. More recently, various versions of
 2-groups, with different strictness assumptions, have been studied as models
 for the string group
 \cite{BaezCransStevensonSchreiber07From-loop-groups-to-2-groups,Schommer-Pries10Central-Extensions-of-Smooth-2-Groups-and-a-Finite-Dimensional-String-2-Group}
 using a method of categorification, initiated by Baez’ school
 \cite{BaezLauda04Higher-dimensional-algebra.-V.-2-groups}. However to treat
 all higher group(oid)s in various categories all together, the most efficient
 method is to apply Duskin and Glenn's idea of Kan complexes
 \cite{Duskin79Higher-dimensional-torsors-and-the-cohomology-of-topoi:-the-abelian-theory,Glenn82Realization-of-cohomology-classes-in-arbitrary-exact-categories}. This method has the advantage that it easily
 gives the concept of Lie $n$-group(oid)s for all $n\in \N$ if we take our
 category to be the one of manifolds, or the concept of topological
 $n$-group(oid)s for all $n\in \N$ if we change our category to be the one of
 topological spaces (see e.g. \cite{Henriques08Integrating-Lsb-infty-algebras,
 Zhu09n-groupoids-and-stacky-groupoids} for such a treatment and the additional
 requirement of the taken categories). This allows us to treat many theories
 uniformly without repeating proofs. Moreover, starting from Getzler's work on
 integration of nilpotent $L_\infty$-algebras \cite{Getzler09Lie-theory-for-nilpotent-Linfty-algebras}, this sort of Lie
 $n$-group(oid)s have been widely used in many integration problems, for
 example the integration of general $L_\infty$-algebras of Henriques
 \cite{Henriques08Integrating-Lsb-infty-algebras}, the integration of finite dimensional Lie algebroid
 \cite{TsengZhu06Integrating-Lie-algebroids-via-stacks}, the integration of
 Courant algebroid \cite{Li-BlandSevera11Integration-of-Exact-Courant-Algebroids,MehtaTang10From-double-Lie-groupoids-to-local-Lie-2-groupoids,ShengZhu11Higher-Extensions-of-Lie-Algebroids-and-Application-to-Courant-Algebroids}. Thus people have a general belief
 that these Lie $n$-group(oid)s are the correct objects of a certain (higher)
 category that corresponds, via integration, to the one of various
 infinitesimal objects, for example, $L_\infty$-algebras,
 $L_\infty$-algebroids.

 Using this sort of Lie 2-groups, we study in this article the integration of another sort of
 infinitesimal object, namely infinite-dimensional Lie algebras and their
 central extensions. We obtain a version of Lie's Third Theorem
 asserting that each locally exponential Lie algebra (see Definition
 \ref{def:loc-exp}) with topologically split center integrates to an \'etale
 \tgp. The same question was studied in
 \cite{Wockel08Categorified-central-extensions-etale-Lie-2-groups-and-Lies-Third-Theorem-for-locally-exponential-Lie-algebras}
 by a completely different and less powerful concept of Lie 2-group, since it
 only admits a notion of smoothness ``near the identity''. Note that this
 concept of ``locally smooth'' Lie 2-group is only known to be equivalent to
 the one mentioned above in very special cases
 \cite{WagemannWockel13A-Cocycle-Model-for-Topological-and-Lie-Group-Cohomology}
 that do not govern the situation we have in this paper.
 The value of the current article is that it extends the result of
 \cite{Wockel08Categorified-central-extensions-etale-Lie-2-groups-and-Lies-Third-Theorem-for-locally-exponential-Lie-algebras}
 to a {\em global} one. One can do this because we weaken (in a certain sense)
 the category of ``smooth 2-spaces'' from
 \cite{Wockel08Categorified-central-extensions-etale-Lie-2-groups-and-Lies-Third-Theorem-for-locally-exponential-Lie-algebras},
 which is nothing but the category of Lie groupoids with strict
 morphisms, to the bicategory of smooth stacks which is equivalent to
 the one of Lie groupoids with generalized
 morphisms and 2-morphisms. To obtain
 our result we then have enhanced the approach from
 \cite{Wockel08Categorified-central-extensions-etale-Lie-2-groups-and-Lies-Third-Theorem-for-locally-exponential-Lie-algebras}
 significantly because generalized morphisms are more involved than
 the strict ones. This effort is eventually rewarded by obtaining a
 globally smooth object integrating an infinite-dimensional Lie
 algebra.
\emptycomment{One can do this because we weaken (in a certain sense)
 the concept of  ``smooth 2-space'' from
 \cite{Wockel08Categorified-central-extensions-etale-Lie-2-groups-and-Lies-Third-Theorem-for-locally-exponential-Lie-algebras},
 which is nothing but a Lie groupoid, to the one of a smooth stack. To obtain
 our result we then have enhanced the approach from
 \cite{Wockel08Categorified-central-extensions-etale-Lie-2-groups-and-Lies-Third-Theorem-for-locally-exponential-Lie-algebras}
 significantly because morphisms between smooth stacks are more involved than
 the ones of Lie groupoids. This effort is eventually rewarded by obtaining a
 globally smooth object integrating an infinite-dimensional Lie algebra.}
\\
 
 We now go into some more detail. The basic idea behind the integration
 processes from
 \cite{Neeb02Central-extensions-of-infinite-dimensional-Lie-groups} (and
 \cite{CrainicFernandes03Integrability-of-Lie-brackets}) is to integrate
 prescribed curvature 2-forms along certain triangles (respectively homotopies
 between paths). Suppose that $\g$ is the Lie algebra of a simply connected Lie
 group $G$ and that $\omega\from \fg\times\fg\to\fz$ is a continuous Lie
 algebra cocycle. If the \emph{period homomorphism}
 $\pi_2(G) \xrightarrow{\per_{\omega}} \z$ (see \eqref{eq:period}) has discrete
 image, then $\omega$ has an integrating cocycle in the locally smooth Lie
 group cohomology $H^2_{\op{loc}}(G, \z/\per_{\omega}(\pi_2(G)))$, i.e., the
 differentiation homomorphism
 \begin{equation*}
  D\from  H^2_{\op{loc}}(G, \z/\per_{\omega}(\pi_2(G)))\to H_{c}^{2}(\fg,\fz)
 \end{equation*}
 has the class of $\omega$ in its range \cite[Corollary
 6.3]{Neeb02Central-extensions-of-infinite-dimensional-Lie-groups}. This is
 shown by integrating $\omega$ along some carefully chosen triangles, an idea
 which dates back to van Est
 \cite{Est58A-group-theoretic-interpretation-of-area-in-the-elementary-geometries.}.
 The procedure then reveals the obstruction against integration as a cocycle
 condition, which may also be viewed as an associativity constraint for
 enlarging a local group to a global one (cf.\
 \cite{Smith51The-complex-of-a-group-relative-to-a-set-of-generators.-I,Smith51The-complex-of-a-group-relative-to-a-set-of-generators.-II,Est62Local-and-global-groups.-I,Est62Local-and-global-groups.-II}).
 So one is naturally pushed to non-associative structures when searching for a
 general solution of the integration problem. Our \tgps are such structures,
 which provide at the same time the next higher coherence that the problem
 naturally has (cf.\ the discussion in \cite[Section
 2]{Wockel08Categorified-central-extensions-etale-Lie-2-groups-and-Lies-Third-Theorem-for-locally-exponential-Lie-algebras}).

 In this paper, we deal with the case when $\per_{\omega}(\pi_2(G))$ is {\em
 not} discrete. In this case, $\z/\per_{\omega}(\pi_2(G))$ does not exist as a
 Lie group any more. One of the natural substitutes for it is the \tgp
 $[\pi_{2}(G)\xrightarrow{\per_{\omega}}\fz]$\footnote{Another substitute would
 be the diffeological group $\fz/\per_{\omega}(\pi_{2}(G))$, see \cite[Remark
 7.1]{Wockel08Categorified-central-extensions-etale-Lie-2-groups-and-Lies-Third-Theorem-for-locally-exponential-Lie-algebras}}
 (see Example \ref{ex:Lie-2-groups}), which exists regardless of the
 discreteness of $\per_{\omega}(\pi_2(G))$. However, if
 $\per_{\omega}(\pi_2(G))$ is discrete, then
 $\fz/\per_{\omega}(\pi_2(G))$ is equivalent to a direct factor of
 $[\pi_{2}(G)\xrightarrow{\per_{\omega}}\fz]$  and thus
 $[\pi_{2}(G)\xrightarrow{\per_{\omega}}\fz]$ is the universal object taking
 over the r\^ole of $\z/\per_{\omega}(\pi_2(G))$. Moreover, extensions of $G$
 by $[\pi_{2}(G)\xrightarrow{\per_{\omega}}\fz]$ live in the category of \tgps
 and thus permit us to incorporate the non-associativity that we mentioned above.
 In this sense our treatment is a natural extension of the 
 procedure from
 \cite{Neeb02Central-extensions-of-infinite-dimensional-Lie-groups}. The price
 to pay for this freedom is that one has to work with group objects in smooth
 stacks (aka \tgps\footnote{Notice that when $n=2$, Lie $n$-groups described
 via Kan complexes that we mentioned earlier are proven
 \cite{Zhu09n-groupoids-and-stacky-groupoids} to be equivalent to group objects
 in smooth stacks. }) instead of group objects in smooth manifolds (aka Lie
 groups). This is technically more challenging but has similar underlying
 ideas.\\
 
 Section \ref{sec:cohomology} is concerned with setting up the theory of
 central extensions of Lie groups by abelian \tgps
 $[A\xrightarrow{\xmodmap}B]$, in particular to show how such extensions can be
 obtained from certain \v{C}ech cohomology classes. This section builds heavily
 on \cite[Section
 3]{Schommer-Pries10Central-Extensions-of-Smooth-2-Groups-and-a-Finite-Dimensional-String-2-Group}.
 It provides the conceptual background for understanding the constructions in
 the next section.
 
 Section \ref{sec:geo-cocycle} then presents the refinement of the
 aforementioned idea of integration along triangles. The problem that one has
 to overcome is that the cocycle condition that one has in
 $\fz/\per_{\omega}(\pi_{2}(G))$ makes many arguments implicitly work. One
 example for this is the smoothness of the multiplication of the central
 extension of Lie groups associated to an integrating cocycle for $\omega$ as
 discussed in Section \ref{sect:interpretation_of_the_construction_i}. All
 these implications now have to be built into the choices of the triangles, and
 this is the key point of Section \ref{sec:geo-cocycle}. In a certain sense,
 the essence of this construction is subsumed in Figures \ref{fig:Dreieck} and
 \ref{fig:Kugel}.
 
 Section \ref{sec:lie-third} then provides the differentiation process which
 justifies naming the construction of the previous section ``integration''. We
 restrict in this treatment to \'etale \tgps, for which the differentiation
 leads to ordinary Lie algebras. The main result on this is the following
 \begin{nntheorem}
  If $G$ is a simply connected Lie group with Lie algebra $\fg$, $\fz$ is a
  Mackey-complete locally convex space and $\omega\from \fg\times\fg\to\fz$ is
  a continuous Lie algebra cocycle. Then the differentiation homomorphism
  \begin{equation*}
   D\from  \op{Ext}(\ul{G},[\pi_{2}(G)\xrightarrow{\per_{\omega}}\fz])\to \op{Ext}(\fg,\fz)\cong H_{c}^{2}(\fg,\fz)
  \end{equation*}
  has $[\omega]$ in its image. Here $\ul{G}$ is the Lie group $G$ viewed as a
  Lie 2-group (see Example \ref{ex:Lie-2-groups}).
 \end{nntheorem}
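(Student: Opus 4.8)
My plan is to prove the statement constructively: I will exhibit an explicit element of $\op{Ext}(\ul{G},[\pi_{2}(G)\xrightarrow{\per_{\omega}}\fz])$ obtained by integrating $\omega$ along triangles in $G$, and then verify that its image under $D$ equals $[\omega]$. The first step is to pass from the cocycle $\omega$ to the left-invariant $\fz$-valued $2$-form $\Omega$ on $G$ with $\Omega_{e}=\omega$. The Lie algebra cocycle identity for $\omega$ is then equivalent to closedness $d\Omega=0$, which is precisely what makes the integrals below depend on the auxiliary choices only through $\pi_{2}(G)$. Here Mackey-completeness of $\fz$ is used so that the $\fz$-valued integrals are well defined, and simple connectivity of $G$ guarantees that a path from $e$ to $g$ is unique up to homotopy, so that the only defect arising below is governed by $\pi_{2}(G)$.

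For the construction I would choose, locally smoothly over a suitable open cover of $G$, a path from $e$ to each $g$, and for each pair $(g,h)$ a singular triangle $\tau_{g,h}$ with vertices $e,g,gh$ built from these paths (the left-translate $g\cdot(\text{path to }h)$ closing the third side); set $f(g,h)=\int_{\tau_{g,h}}\Omega\in\fz$. By left-invariance of $\Omega$ and Stokes' theorem, the group coboundary $(\delta f)(g,h,k)$ equals the integral of $\Omega$ over the boundary of the tetrahedron with vertices $e,g,gh,ghk$; as $\Omega$ is closed this integral depends only on the homotopy class of the sphere that the four faces sweep out, giving a locally constant $\theta\from G^{3}\to\pi_{2}(G)$ with $\delta f=\per_{\omega}\circ\theta$, while the five tetrahedra in a $4$-simplex force $\delta\theta=0$. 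The pair $(f,\theta)$ is exactly the \v{C}ech-type cocycle datum that Section \ref{sec:cohomology} converts into a central extension of $\ul{G}$ by the abelian \tgp $[\pi_{2}(G)\xrightarrow{\per_{\omega}}\fz]$ (see Example \ref{ex:Lie-2-groups}); its class is the element I feed into $D$, independently of the path and triangle choices up to the equivalence making $\op{Ext}$ well defined. The geometry of these choices is recorded in Figures \ref{fig:Dreieck} and \ref{fig:Kugel}.

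To compute $D$ I note that the Lie algebra of $[\pi_{2}(G)\xrightarrow{\per_{\omega}}\fz]$ is $\fz$, since the discrete group $\pi_{2}(G)$ contributes nothing infinitesimally; hence $D$ does land in $\op{Ext}(\fg,\fz)\cong H_{c}^{2}(\fg,\fz)$. To see $D(f,\theta)=[\omega]$, I would differentiate $f$ to second order along one-parameter subgroups: substituting $g=\exp(sX)$, $h=\exp(tY)$ into $f(g,h)=\int_{\tau_{g,h}}\Omega$ and extracting the $st$-coefficient recovers $\omega(X,Y)$ up to a symmetric term and a coboundary, which is the infinitesimal form of the van Est argument underlying \cite[Cor.~6.3]{Neeb02Central-extensions-of-infinite-dimensional-Lie-groups}. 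This identifies the differentiated class with $[\omega]$.

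The hard part will be Section \ref{sec:geo-cocycle}: all of the above choices must be arranged so that $f$ is genuinely locally smooth on $G^{2}$ and $\theta$ locally constant on $G^{3}$, coherently across the open cover and compatibly with multiplication, \emph{without} the option of absorbing ambiguities into a quotient, as one could when $\Per{\omega}$ is discrete and $\fz/\per_{\omega}(\pi_{2}(G))$ is a Lie group. Every coherence that the quotient construction hides must instead be carried explicitly by $(f,\theta)$. Verifying that this datum assembles into an honest extension of group \emph{stacks} — not merely a locally smooth $2$-group cocycle — is where the framework of Section \ref{sec:cohomology} does the decisive work, and I expect this smoothness-and-coherence bookkeeping, rather than the differentiation computation, to be the principal obstacle.
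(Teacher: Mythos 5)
Your overall strategy coincides with the paper's: integrate $\omega^{l}$ over triangles to obtain a $(\pi_{2}(G)\xrightarrow{\per_{\omega}}\fz)$-valued datum, feed it into the machinery of Section \ref{sec:cohomology}, and differentiate. But there is a genuine gap at the step you yourself flag as ``the principal obstacle'', and it is not mere bookkeeping. The pair $(f,\theta)$ you construct is the \emph{locally smooth} group cocycle $(\locsm{F},\locsm{\Theta})$ of Section \ref{sect:locally_smooth_cocycles}; it is \emph{not} the datum that Section \ref{sec:cohomology} converts into a central extension. That machinery requires a five-component differentiable \v{C}ech $3$-cocycle $(\gamma_{i,j},\eta_{i,j,l},F_{i,j},\Phi_{(i,j),(i',j')},\Theta_{i,j,l})$ on a simplicial cover of $BG_{\bullet}$, and the passage from $(\locsm{F},\locsm{\Theta})$ to such a cocycle is exactly where the non-discreteness of $\Per{\omega}$ bites. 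The natural candidate for the local multiplication data on the translated cover $V_{i}=i\cdot V$, namely $\locsm{F}(i,i^{-1}g)+\locsm{F}(j,j^{-1}h)+\locsm{F}(g,h)-\locsm{F}(ij,(ij)^{-1}gh)$, is not visibly smooth: its smoothness in the classical setting is a \emph{consequence} of the group structure one is trying to build, and when $\Per{\omega}$ is non-discrete one cannot absorb the ambiguity into a quotient.

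The missing idea is the paper's replacement of that formula: for each pair $(i,j)$ one constructs a \emph{smooth} family $\alpha_{i,j}\from V_{i,j}\to C^{\infty}_{pw}(\Sigma,G)$ of piecewise-smooth filling $2$-chains with prescribed boundary \eqref{eqn:boundary_of_alpha_ij} (Lemma \ref{lem:defining_properties_of_alpha_ij}, assembled from explicit homotopies using the conjugation-adapted neighborhoods $W_{j}$), and sets $F_{i,j}:=\int_{\alpha_{i,j}}\omega^{l}$, which is smooth by construction; the discrepancy with the naive formula lies in $\Per{\omega}$ and is recorded by the $\pi_{2}(G)$-valued components $\Phi_{(i,j),(i',j')}$ and $\Theta_{i,j,l}$ (see Section \ref{sect:interpretation_of_the_construction_i}). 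Without supplying this construction your proposal does not actually produce an element of $\op{Ext}(\ul{G},[\pi_{2}(G)\xrightarrow{\per_{\omega}}\fz])$. Your differentiation step is essentially the paper's (the local Lie group underlying $\wh{G}_{\phi}$ has Lie algebra $\fz\oplus_{\omega}\fg$), though note that defining $D$ at all requires first replacing the extension by an equivalent one with \'etale total $2$-group and constructing the Lie functor on \'etale \tgps, which your sketch passes over.
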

 This then implies readily the following generalization of Lie's Third Theorem
 that our construction allows for.
 \begin{nntheorem}
  If $\fg$ is a locally convex locally exponential Lie algebra such that
  $\fz:=\fz(\fg)\se\fg$ is a complemented and Mackey-complete subspace, then
  there exists an \'etale \tgp $G$ with $L(G)\cong \fg$.
 \end{nntheorem}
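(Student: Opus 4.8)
The plan is to present $\fg$ as a topologically split central extension of a Lie algebra that \emph{does} integrate to a simply connected Lie group, and then to feed the resulting cocycle into the first Theorem above. Write $\ol\fg:=\fg/\fz$ for the quotient by the center. Since $\fz=\fz(\fg)$ is complemented, I would choose a continuous linear splitting $\fg\cong\fz\oplus\ol\fg$; transporting the bracket through this splitting exhibits $\fg$ as a central extension
\begin{equation*}
 0\to\fz\to\fg\to\ol\fg\to 0
\end{equation*}
classified by a continuous Lie algebra cocycle $\omega\from\ol\fg\times\ol\fg\to\fz$, i.e.\ by a class $[\omega]\in H^2_c(\ol\fg,\fz)$. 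Complementedness of $\fz$ is exactly what guarantees that this cocycle is globally defined and continuous, so that $[\omega]$ represents $\fg$ under $\op{Ext}(\ol\fg,\fz)\cong H^2_c(\ol\fg,\fz)$. Being complemented, $\fz$ is closed and hence inherits Mackey completeness from $\fg$, and one checks (using the continuous splitting) that $\ol\fg$ is again locally exponential; these are the standing hypotheses needed below.

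The key input is that $\ol\fg=\fg/\fz(\fg)$ integrates to a \emph{simply connected} Lie group $G$ with $L(G)\cong\ol\fg$, and this is where I expect the main difficulty to lie, since $\ol\fg$ may itself have a nontrivial center and is therefore not obviously enlargible. The mechanism I would use is the adjoint representation, which induces an injection $\ol\fg\hookrightarrow\mathfrak{der}(\fg)$ onto the inner derivations; integrating this to the group of inner automorphisms and passing to its universal cover produces a simply connected $G$ integrating $\ol\fg$. Heuristically, the obstruction to enlarging $\fg$ itself is concentrated in the central (period) direction that we have just divided out, so the center-free model $\ol\fg$ should no longer see it. Making this precise — and verifying that the inner automorphism group is a genuine locally exponential Lie group with the expected Lie algebra — is the technical heart of the argument and relies on the enlargibility machinery for locally exponential Lie algebras rather than on the Theorem above.

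With such a $G$ in hand the remaining steps are formal. I would apply the first Theorem to $G$, $\fz$ and $\omega$: it guarantees a class $[E]\in\op{Ext}(\ul G,[\pi_2(G)\xrightarrow{\per_\omega}\fz])$ with $D([E])=[\omega]$. Since $\pi_2(G)$ is discrete and $\ul G$ is an honest Lie group, the extending abelian \tgp $[\pi_2(G)\xrightarrow{\per_\omega}\fz]$ is étale and hence so is $E$. Finally I would differentiate: $\ul G$ differentiates to $\ol\fg$, while the discrete group $\pi_2(G)$ contributes nothing to the Lie algebra, so $[\pi_2(G)\xrightarrow{\per_\omega}\fz]$ differentiates to $\fz$. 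Thus $L(E)$ sits in a central extension $0\to\fz\to L(E)\to\ol\fg\to 0$ whose class is $D([E])=[\omega]$; by construction $[\omega]$ is the class of $\fg$, whence $L(E)\cong\fg$, and $E$ is the desired étale \tgp. The only genuinely new work beyond the Theorem is therefore the enlargibility of $\ol\fg$ together with the bookkeeping that identifies the differentiated extension with the original $\omega$.
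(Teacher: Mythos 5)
Your proposal follows essentially the same route as the paper: it quotients by the complemented center to get a continuous cocycle $\omega$ on $\fg_{\op{ad}}=\fg/\fz(\fg)$, integrates $\fg_{\op{ad}}$ via the adjoint group inside $\Aut(\fg)$ (the paper cites Neeb's Theorem IV.3.8 for exactly the enlargibility step you flag as the technical heart), and then applies the integration theorem for $\omega$ together with the identification $L(\wh{G}_{\phi})\cong\fz\oplus_{\omega}\fg_{\op{ad}}\cong\fg$. The argument is correct and matches the paper's proof.
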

 In the end, we provide some background on infinite-dimensional manifolds and
 their derived concepts of Lie groups, Lie groupoids and smooth stacks.

\end{tabsection}

\emptycomment{ In the setting of this paper, his results boil down to an exact
sequence
\begin{equation*}
 0\to H^{2}_{loc}(G,Z)\xrightarrow{D} H^{2}_{c}(\fg,\fz)\xrightarrow{[\omega]\mapsto
 (q{\circ}\per_{\omega})}\Hom(\pi_{2}(G),Z)
\end{equation*}
In particular, this shows that if we are interested in whether there exists a
locally smooth cocycle $\locsm{f}$ (for some choice of $\Gamma\se\fz$) with
$D[\wt{f}]=[\omega]$, then this is the case if and only if
$\Per{\omega}=\per_{\omega}(\pi_{2}(G))\se \fz$ is discrete. How to construct
such a cocycle in the case of discrete $\Per{\omega}$ will be explained in more
detail in the end Section \ref{sect:locally_smooth_cocycles}.}

\section*{Conventions}

\begin{tabsection}
 Unless stated otherwise, $G$ denotes throughout a 1-connected Lie group,
 modeled on a locally convex space, and $\fg$ denotes its Lie algebra.
 Moreover, $\fz$ stands for a Mackey-complete locally convex vector
 space\footnote{Mackey-complete locally convex spaces also run under the name
 convenient vector space, in particular each complete locally convex space is
 of this type, cf.\ \cite[Theorem
 I.2.14]{KrieglMichor97The-Convenient-Setting-of-Global-Analysis}},
 $\Gamma\se\fz$ is a discrete subgroup so that $Z:=\fz/\Gamma$ is a Lie group
 with universal covering morphism $q\from\fz\to Z$. In addition,
 $\omega \from \fg\times\fg\to\fz$ will always denote a continuous Lie algebra
 cocycle. Associated to this data is the \emph{period homomorphism}
 \begin{equation} \label{eq:period} \per_{\omega}\from
  \pi_{2}(G)\to \fz,\quad [\sigma]\mapsto \int_{\sigma}\omega^{l},
 \end{equation}
 where $\omega^{l}$ is the left-invariant 2-form on $G$ with
 $\omega^{l}(e)=\omega$ (cf.\ \cite[Section
 5]{Neeb02Central-extensions-of-infinite-dimensional-Lie-groups}). Note that
 under our assumptions $\pi_{2}(G)\cong H_{2}(G)$ by the Hurewicz homomorphism
 and we identify $\pi_{2}(G)$ with $H_{2}(G)$ throughout.
 
 We denote by $\Delta^{(n)}\se\R^{n}$ the standard $n$-simplex, viewed as a
 manifold with corners. By $C^{\infty}(\Delta^{(n)},G)$ we mean the manifold of
 smooth $n$-simplices in $G$ (see also Proposition
 \ref{prop:smoothness_of_integration}) and by $C^{\infty}_{*}(\Delta^{(n)},G)$
 those smooth $n$-simplices that are base-point preserving maps, where the
 base-point of $\Delta^{(n)}$ is $0$ and the base-point of $G$ is the identity.
 For a simplicial complex $\Sigma$ we will denote by
 $C^{\infty}_{pw}(\Sigma,G)$ the piece-wise smooth maps (cf.\ Remark
 \ref{rem:piece-wise-smooth-maps}). The simplicial manifold that shall play an
 important rôle in this paper is the classifying simplicial space
 $\bB G_{\bullet}=(G^{i})_{i\in \N_{0}}$ (with the product smooth structure and
 the convention $G^{0}:=*$) and the standard simplicial maps
 $p_{i}^{(n)}\from \bB G^{(n)}\to \bB G^{(n-1)}$ (cf.\ Example \ref{ex:nerve}).
 Moreover, $\mu\from A\to B$ is always a morphism between the abelian Lie
 groups $A$ and $B$.
\end{tabsection}

\section{Differentiable hypercohomology and its geometric correspondence}
\label{sec:cohomology}

\subsection{Differentiable hypercohomology}
\label{sect:differentiable_hypercohomology}

\begin{tabsection}
 The hypercohomology of complexes of sheaves on manifolds, action groupoids and
 complex stacks is explicitly studied for instance in
 \cite{Brylinski93Loop-spaces-characteristic-classes-and-geometric-quantization,
 Gomi05Equivariant-smooth-Deligne-cohomology} and \cite[\S
 A.2]{FelderHenriquesRossiZhu08A-gerbe-for-the-elliptic-gamma-function}. Here
 we extend it to the category of simplicial manifolds and relate it to our
 construction using a suitable covering constructed in Section
 \ref{sec:geo-cocycle}. We emphasize the \v{C}ech approach to differentiable
 hypercohomology and we are mostly interested in the simplicial manifold
 $\bB G_\bullet$ (i.e., the nerve of the Lie groupoid $G\rrarrow *$, see Example
 \ref{ex:nerve}).
 
 Recall that a {\em simplicial manifold} $X_\bullet$ is a functor
 $\Delta^{\op{op}}\to \cat{Man}$, where $\Delta$ is the standard simplex
 category of finite ordinal numbers $[n]$ and non-decreasing maps $[n]\to [m]$.
 This has the alternative description as a collection of manifolds $X_n$ and
 structure maps
 \begin{equation}\label{eq:fd}
  d^n_k: X_n \to X_{n-1} \;\text{(face maps)}\quad s^n_k: X_n \to X_{n+1} \; \text{(degeneracy maps)},\;\; k\in \{0, 1, 2,..., n\}
 \end{equation}
 that satisfy the usual coherence conditions (see for instance \cite[Chapter
 I.1]{GoerssJardine99Simplicial-homotopy-theory}).
\end{tabsection}

\begin{example}\label{ex:nerve}
 \begin{itemize}
  \item [a)] We can interpret each manifold as \emph{constant simplicial space}
        with $X_{n}=M$ for all $n$ and all structure maps to be the identity.
  \item [b)] Given a Lie groupoid\footnote{For a Lie groupoid we require source
        and target map to be surjective submersions in the sense of Appendix
        \ref{sec:grothendieck-pretop}}
        $\cG:= (G_1 \underset{\bt}{\overset{\bs}{\rrarrow}} G_0) $, we complete
        it to a simplicial manifold $\bB\cG_\bullet$, with
        \[ 
         \bB\cG_n = G_1 \times_{\bs, G_0, \bt} G_1 \times_{ \bs, G_0, \bt} \dots
         \times_{ \bs, G_0, \bt} G_1, \quad \text{($n$ copies of $G_1$)},
        \]
        for $n\ge 1$ and $\bB G_0=G_0$. The face maps, for $n\ge 2$, are given by
        \begin{equation*}
         d_k^{n}(g_1,{\dots},g_n ) =  
         \begin{cases}
         (g_2, {\dots}, g_n) & k=0 \\
         (g_1, {\dots}, g_k g_{k+1}, {\dots}, g_n) & 0<k<n \\
         (g_1, {\dots}, g_{n-1})  & k=n,
         \end{cases}
        \end{equation*}
        and the degeneracy maps, for $n \ge 1$, by
        \begin{equation*}
         s_k^{n}(g_1, \dots, g_n) =
         \begin{cases} (1_{\bt(g_1)}, g_1.
         \dots, g_n) & k=0 \\
         (g_1, \dots, g_k,  1_{\bt(g_{k+1})}, g_{k+1}, \dots, g_n) & 0<k<n, \\
         (g_1, \dots, g_n, 1_{\bs(g_n)}) & k=n.
         \end{cases}
        \end{equation*}
        Moreover, $d^1_0(g)=\bs(g)$, $d^1_1(g)=\bt (g)$, $s_0^0(x)=e(x)$, where
        $e: G_0\to G_1$ is the identity embedding.
        This construction is known as the \emph{nerve} of the Lie groupoid
        $\cG$. We call it $\bB\cG_\bullet$ because its geometric realization is
        the classifying space of $\cG$
        \cite{Segal68Classifying-spaces-and-spectral-sequences}. If
        $\cG=(G\rrarrow *)$, then we also denote $\bB\cG_{\bullet}$ by
        $\bB G_{\bullet}$.
 \end{itemize}
\end{example}

\begin{tabsection}
 For a simplicial manifold $X_\bullet$, a sheaf $\cF^\bullet$ on $X_\bullet$
 consists of sheaves $\cF^n$ on $X_n$ for all $n$ and morphisms
 $F^{\bullet}(\alpha)\from X_{\bullet}(\alpha)^{*}\cF^{n}\to \cF^{m}$ for each
 $\alpha\from [n]\to [m]$ such that
 $F^{\bullet}(\alpha {\circ} \beta)=F^{\bullet}(\alpha){\circ} F^{\bullet}(\beta)$
 \cite[\S 5.1.6]{Deligne74Theorie-de-Hodge.-III}. Alternatively these morphisms
 can also be described by morphisms
 $D^{n}_{k}\from (d_{k}^{n})^{*}\cF^{n-1}\to\cF^{n}$ and
 $S_{k}^{n}\from (s_{k}^{n})^{*}\cF^{n+1}\to\cF^{n}$ satisfying the
 corresponding compatibility conditions. Likewise, we define morphisms of
 sheaves as in \cite[\S 5.1.6]{Deligne74Theorie-de-Hodge.-III}. This then leads
 to the notion of a (bounded below) complex of sheaves on $X_{\bullet}$
 \begin{equation}\label{eq:sheaf-cx}
  \cF^{\bullet}_{*}=((\cF^{\bullet}_{n})_{n\in \N_{0}}, \cF_{n}^{\bullet}\xrightarrow{d_{n}}\cF^{\bullet}_{n+1}),
 \end{equation}
 (see also \cite[\S 3.2]{Gomi05Equivariant-smooth-Deligne-cohomology}).
\end{tabsection}

\begin{tabsection}
 A covering $\cU$ of a simplicial manifold $X_\bullet$ consists of a simplicial
 set $I^{\bullet}$ and a covering $(U^{(n)}_{i})_{i\in I^{(n)}}$ of $X_{n}$
 such that
 $X_{\bullet}(\alpha)(U^{(m)}_{i})\se U^{(n)}_{I^{\bullet}(\alpha)(i)}$ for
 each $\alpha\from [n]\to [m]$. One can demand less structure for a covering of
 a simplicial space, see
 \cite{Schommer-Pries10Central-Extensions-of-Smooth-2-Groups-and-a-Finite-Dimensional-String-2-Group}
 or
 \cite{WagemannWockel13A-Cocycle-Model-for-Topological-and-Lie-Group-Cohomology}.
 We demand all this structure for making the normalization arguments later on
 work. In particular, a covering induces another simplicial space
 $\cU_{\bullet}$ with $\cU_{n}:=\coprod_{i\in I^{n}}U^{(n)}_{i}$ and the
 inclusions induce a simplicial map $\cU_{\bullet}\to X_{\bullet}$. The
 following lemma shows that one can always extend coverings of $X_{n}$ to
 coverings of $X_{\bullet}$.
\end{tabsection}
	
\begin{lemma}\label{lem:refinements_to_simplicial_covers}
 If $X_{\bullet}$ is a simplicial manifold and $(U_{j})_{j\in J}$ is a covering of
 $X_{m}$, then there exists a covering $\cU$ of $X_{\bullet}$ such that
 $(U^{(m)}_{i\in I^{(m)}})$ is a refinement of $(U_{j})_{j\in J}$.
\end{lemma}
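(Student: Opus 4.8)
The plan is to build $\cU$ by pulling the given cover $(U_j)_{j\in J}$ of $X_m$ back along \emph{every} simplicial operator that targets $X_m$ and then intersecting. To avoid a clash with the fixed level $m$, I write $p,q$ for generic levels. For $\beta\from[m]\to[n]$ in $\Delta$ the functor $X_\bullet$ gives a smooth map $X_\bullet(\beta)\from X_n\to X_m$, and I would index the cover of $X_n$ by
\begin{equation*}
 I^{(n)}:=\op{Fun}\big(\Delta([m],[n]),J\big),
\end{equation*}
the set of functions $\phi$ sending each $\beta\from[m]\to[n]$ to an element $\phi(\beta)\in J$, and set
\begin{equation*}
 U^{(n)}_{\phi}:=\bigcap_{\beta\in\Delta([m],[n])}X_\bullet(\beta)^{-1}\big(U_{\phi(\beta)}\big).
\end{equation*}
Because $\Delta([m],[n])$ is finite and non-empty, this is a finite intersection of open sets and hence open.

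First I would verify that the $U^{(n)}_{\phi}$ cover $X_n$: for $x\in X_n$ and each $\beta$ the point $X_\bullet(\beta)(x)$ lies in some $U_j$, and picking one such $j=:\phi(\beta)$ for every $\beta$ yields a $\phi$ with $x\in U^{(n)}_{\phi}$. Next I would promote $I^{\bullet}=\op{Fun}(\Delta([m],-),J)$ to a simplicial set: since $\Delta([m],-)$ is covariant by post-composition, $\op{Fun}(\Delta([m],-),J)$ is contravariant, with $I^{\bullet}(\alpha)\from I^{(q)}\to I^{(p)}$ for $\alpha\from[p]\to[q]$ given by $\big(I^{\bullet}(\alpha)(\phi)\big)(\beta)=\phi(\alpha\circ\beta)$.

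The decisive compatibility then drops out of functoriality of $X_\bullet$. Fix $\alpha\from[p]\to[q]$ and $\phi\in I^{(q)}$. For $x\in U^{(q)}_{\phi}$ and any $\beta\from[m]\to[p]$, contravariance gives $X_\bullet(\beta)\circ X_\bullet(\alpha)=X_\bullet(\alpha\circ\beta)$, so
\begin{equation*}
 X_\bullet(\beta)\big(X_\bullet(\alpha)(x)\big)=X_\bullet(\alpha\circ\beta)(x)\in U_{\phi(\alpha\circ\beta)}=U_{(I^{\bullet}(\alpha)(\phi))(\beta)}.
\end{equation*}
As this holds for every such $\beta$, we get $X_\bullet(\alpha)(x)\in U^{(p)}_{I^{\bullet}(\alpha)(\phi)}$, i.e.\ $X_\bullet(\alpha)\big(U^{(q)}_{\phi}\big)\se U^{(p)}_{I^{\bullet}(\alpha)(\phi)}$, which is exactly the defining condition for $\cU$ to be a covering of $X_\bullet$. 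The refinement claim is immediate: taking $\beta=\id_{[m]}$ in the intersection defining $U^{(m)}_{\phi}$ gives $U^{(m)}_{\phi}\se U_{\phi(\id_{[m]})}$, so $\phi\mapsto\phi(\id_{[m]})$ realizes $(U^{(m)}_{\phi})_{\phi\in I^{(m)}}$ as a refinement of $(U_j)_{j\in J}$.

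I expect the genuinely routine parts to be the covering property and the simplicial identities for $I^{\bullet}$; the only conceptual point is that the cover of each $X_n$ must be compatible with \emph{all} face and degeneracy operators at once, and the ``pull back along everything and intersect'' recipe is precisely what forces this, collapsing the whole verification to the single functoriality identity above. The one remaining check is that these finite-intersection preimage families are admissible coverings for the Grothendieck pretopology used throughout (as set up in the appendix), which holds because preimages under the structure maps and finite intersections of covering families are again covering families.
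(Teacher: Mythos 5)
Your construction is exactly the one the paper uses: index the cover of $X_n$ by functions $\Delta([m],[n])\to J$, define each $U^{(n)}_{\phi}$ as the finite intersection of the pullbacks $X_\bullet(\beta)^{-1}(U_{\phi(\beta)})$, check compatibility via functoriality of $X_\bullet$, and realize the refinement through $\phi\mapsto\phi(\id_{[m]})$. The argument is correct and matches the paper's proof in all essentials.
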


\begin{proof}
 We denote $\Delta(m,n):=\Hom_{\Delta}([m],[n])$ (note that this is a finite
 set). We first observe that $J$ determines a simplicial set $I^{\bullet}$ with
 $I^{(n)}:=J^{\Delta(m,n)}$ and with $\alpha\from [n]\to [n']$ getting mapped
 to
 \begin{equation*}
  \alpha^{\Delta}\from J^{\Delta(m,n')} \to J^{\Delta(m,n)},\quad 
  \alpha^{\Delta}((j_{f})_{f\in \Delta(m,n')})_{g}= j_{\alpha{\circ} g}.
 \end{equation*}
 Now for each $f\in \Delta(m,n)$ we pull back the covering $(U_{j})_{j\in J}$
 of $X_{m}$ to a covering of $X_{n}$ and take as a covering of $X_{n}$ the
 coarsest common refinement $(U_{i}^{(n)})_{i\in I^{(n)}}$ with
 \begin{equation*}
  U_{i}^{(n)}:= X_{\bullet}(f_{0})^{-1}(U_{j_{f_{0}}})\cap ...\cap X_{\bullet}(f_{s})^{-1}(U_{j_{f_{s}}})
 \end{equation*}
 (where $i=(j_{f_{0}},...,j_{f_{s}})$ with $s=|\Delta(m,n)|$) of all such
 coverings. To check that this is indeed a covering we have to show that for
 each $\alpha\from [n']\to [n]$ and each $i=(j_{f})_{f\in \Delta([m],[n])}$ we
 have
 \begin{align*}
  &X_{\bullet}(\alpha)(U^{(n)}_{i})\se U^{(n')}_{\alpha^{\Delta}(i)}\\\Leftrightarrow~ 
  &X_{\bullet}(\alpha)\big( X_{\bullet}(f_{0})^{-1}(U_{j_{f_{0}}})\cap\dots\cap X_{\bullet}(f_{s})^{-1}(U_{j_{f_{s}}}) \big)\se 
  X_{\bullet}(f)^{-1}(U_{j_{\alpha {\circ} f}}) \;\forall f\in \Delta(m,n')\\\Leftrightarrow~
  &X_{\bullet}(\alpha {\circ} f)\big( X_{\bullet}(f_{0})^{-1}(U_{j_{f_{0}}})\cap\dots \cap X_{\bullet}(f_{s})^{-1}(U_{j_{f_{s}}}) \big)\se
  U_{j_{\alpha {\circ} f}} \;\forall f\in \Delta(m,n').
 \end{align*}
 The latter is true since for each $f\in \Delta(m,n')$ we have that
 $\alpha {\circ} f =f_{i}$ for some $i$. To complete the proof we observe
 that the canonical map $I^{(m)}\to J$,
 $(j_{f})_{f\in \Delta(m,m)}\mapsto j_{\id_{[m]}}$ induces the corresponding
 refinement.
\end{proof}

\begin{tabsection}
 Let now $\cU$ be a covering of $X_{\bullet}$. Then we set
 \begin{equation}\label{hyper-check-tri-cx}
  {\check{C}}^{p, q, r}:= \prod_{i_{0},...,i_{q}\in I^{(p)}} \cF_{r}^{p}(U^{(p)}_{i_{0}}\cap...\cap U^{(p)}_{i_{q}}).
 \end{equation}
 We have $\dsimp(:=d_{1,0,0}): \check{C}^{p-1,q,r}\to \check{C}^{p, q,
   r}$ \cccomment{instead of $\check{C}^{p,q,r}\to \check{C}^{p+1, q,
     r}$ this causes the sign later on for $\phi_{3,0,-1}$.}
 defined by
 \begin{equation} \label{eq:delta}
  (\dsimp f)_{i_{0},...,i_{q}}= \sum_{k=0}^{p}(-1)^{k+p} D_{k}^{p}(\underbrace{f_{d_{k}^{p}(i_{0}),...,d_{k}^{p}(i_{q})}{\circ} d_{k}^{p}}_{\in (d_{k}^{p})^{*}\cF^{p-1}})
 \end{equation}
 and $\cd(:=d_{0,1,0}): \check{C}^{p,q,r} \to \check{C}^{p, q+1, r}$ defined by
 the \v{C}ech
 differential\begin{equation}\label{de:check-delta}
  (\check{\delta} f)_{i_{0},...,i_{q+1}}(x)=
  \sum_{l=0}^{q+1}(-1)^{l}f_{i_{0},...,\wh{i_{l}},...,i_{q+1}}(x).
 \end{equation}
 
 There is another differential
 $d(:=d_{0,0,1}): \check{C}^{p,q,r} \to \check{C}^{p,q,r+1}$, induced by the
 differential $d_{r}\from\cF_r \to \cF_{r+1}$ in the sheaf complex. Then
 $(\check{C}^{p,q,r}, \dsimp, \cd, d)$ is a triple complex and the total
 complex is $\check{C}^{N} :=\oplus_{N=p+q+r} \check{C}^{p,q,r}$ with the total
 differential $D_3= \dsimp + (-1)^{p} \cd + (-1)^{p+q} d$.
\end{tabsection}

\begin{definition}\label{def:hypercohomology}
 The \v Cech hypercohomology ${\check{H}}^n_\cU(X_\bullet, \cF^{\bullet}_{*})$
 of the complex of sheaves $\cF^{\bullet}_{*}$ on $X_{\bullet}$ with respect to
 the covering $\cU$ is the cohomology of the total complex of
 $(\check{C}^{p,q,r}, \dsimp, \cd, d)$. The group of $n$-cocycles of this
 triple complex is denoted
 $\check{Z}^{n}_{\cU}(X_{\bullet},\cF^{\bullet}_{*})$
 and its elements are called \emph{differentiable cocycles}.
\end{definition}

\begin{tabsection}
 On the other hand, one can define sheaf hypercohomology
 $H^n_{sh} (X_\bullet,\cF^{\bullet}_{*})$ to be the hyper derived functor of
 the functor
 \begin{equation*}
  \cF^{\bullet}_{*}\mapsto H^{0}\big( \Gamma(\cF^{\bullet}_{*}) \big),
 \end{equation*}
 where the section functor $\Gamma$ for a sheaf $\cF^{\bullet}$ on $X_{\bullet}$ is
 given by
 $\cF^{\bullet}\mapsto \ker(D_{0}-D_{1}\from \cF^{0}(X_{0})\to\cF^{1}(X_{1}))$.
 The hyper derived functor is then given by the usual construction using
 injective resolutions as in \cite[Chapter
 2]{Friedlander82Etale-homotopy-of-simplicial-schemes}, see also
 \cite{Deligne74Theorie-de-Hodge.-III,Conrad03Cohomological-Descent}. The
 following proposition generalizes the case of the relation between \v Cech
 cohomology and sheaf cohomology. Recall that a covering $\cU$ of $X_\bullet$
 is called $\cF$-{\em acyclic} if
 $H^{\geq 1} (U^{(n)}_{i_{0}}\cap...\cap U^{(n)}_{i_{q}}, \cF^n_r)=0$ for all
 finite subsets $\{i_{0},...,i_{q}\}\se I^{(n)}$ and all $n, r$. The reasoning
 of \cite[\S
 A.2]{FelderHenriquesRossiZhu08A-gerbe-for-the-elliptic-gamma-function} then
 carries over to show
\end{tabsection}

\begin{Proposition}\label{prop:deligne-cech}
 In the above setup, there is a morphism of abelian groups
 \[
  {\check{H}}^n_\cU(X_\bullet, \cF^{\bullet}_{*}) \to H^n_{sh}(X_\bullet, \cF^{\bullet}_{*}).
 \]
 In particular, if $\cU$ is an $\cF_{r}$-acyclic covering of $\cX$ for each $r$, then
 ${\check{H}}^n_\cU(\cX, \cF^{\bullet}_{*}) \cong H^n_{sh}(\cX, \cF^{\bullet}_{*})$.
\end{Proposition}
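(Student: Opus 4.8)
The plan is to follow the classical pattern relating \v{C}ech cohomology to (hyper)derived-functor sheaf cohomology, but carried out in the triple-complex setting of $\check{C}^{p,q,r}$. The comparison morphism ${\check{H}}^n_\cU \to H^n_{sh}$ should come from the fact that the \v{C}ech construction with respect to $\cU$ computes a presheaf-level approximation to the derived functor, and there is always a canonical map from a \v{C}ech-type complex to the honest resolution. Concretely, I would first fix injective resolutions $\cF^p_r \to \cI^{p,\bullet}_r$ of each sheaf on each $X_n$, compatible with the structure morphisms $D^n_k$ and $S^n_k$, so as to build a resolution of the whole complex of sheaves on $X_\bullet$; taking global sections of this injective resolution computes $H^n_{sh}$ by definition. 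The \v{C}ech complex $\check{C}^{p,q,r}$ with its three differentials $\dsimp,\cd,d$ maps into the totalization of the double complex formed by these sections by the standard ``sheafification/augmentation'' map, and one checks it is a morphism of complexes with respect to $D_3$. This yields the first assertion.

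For the second, acyclicity statement, the argument is a spectral-sequence collapse. I would set up the spectral sequence of the triple complex in which one first takes cohomology in the \v{C}ech ($\cd$, the $q$-direction) and sheaf-resolution directions. The hypothesis that $\cU$ is $\cF_r$-acyclic, i.e.\ $H^{\geq 1}(U^{(n)}_{i_0}\cap\dots\cap U^{(n)}_{i_q},\cF^n_r)=0$, is exactly what is needed so that on each intersection the \v{C}ech resolution computes the correct sections: by a Leray/\v{C}ech-to-derived-functor comparison on each fixed $X_n$ (the classical statement that a \v{C}ech complex with respect to an acyclic cover computes sheaf cohomology), the $q$-graded pieces degenerate to the global sections $\cF^n_r(X_n)$ in $q$-degree $0$ and vanish in higher $q$-degree. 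This collapses the relevant page so that the comparison morphism becomes an isomorphism. Since this is precisely ``the reasoning of \cite[\S A.2]{FelderHenriquesRossiZhu08A-gerbe-for-the-elliptic-gamma-function} carried over,'' I would invoke that reference for the detailed spectral-sequence bookkeeping and only indicate what changes in passing from a single manifold (or action groupoid) to a general simplicial manifold $X_\bullet$.

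The main obstacle I expect is the simplicial ($p$-)direction and its interaction with the comparison. On a single space the \v{C}ech-to-derived-functor comparison is textbook, but here the sheaves $\cF^n_r$ live on the different spaces $X_n$ and are glued by the morphisms $D^n_k,S^n_k$; one must verify that the chosen injective resolutions can be made simplicially compatible so that $\dsimp$ lifts to the resolution, and that the comparison map commutes with $\dsimp$ up to the sign conventions built into $D_3 = \dsimp + (-1)^p\cd + (-1)^{p+q}d$. A clean way around constructing globally compatible injectives by hand is to use the Godement/canonical resolution, which is functorial and hence automatically carries the simplicial structure; this is likely how \cite{FelderHenriquesRossiZhu08A-gerbe-for-the-elliptic-gamma-function} proceeds, and I would adopt the same device. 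The acyclicity hypothesis is used \emph{only} in the $q$ (\v{C}ech) direction, so the $p$-direction plays no role in the degeneration argument itself; its only cost is the more elaborate sign and compatibility bookkeeping, which is where I would be most careful but which contributes no genuine difficulty beyond the single-space case.
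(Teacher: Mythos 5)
Your proposal is correct and follows essentially the same route as the paper, which itself gives no independent argument but simply asserts that ``the reasoning of [FHRZ08, \S A.2] carries over'': a canonical comparison map from the \v{C}ech triple complex to the global sections of a functorial (e.g.\ Godement-type) injective resolution, followed by a spectral-sequence collapse in the \v{C}ech direction under the acyclicity hypothesis. Your additional remarks on using a functorial resolution to handle simplicial compatibility in the $p$-direction are exactly the right way to make the ``carries over'' precise.
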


\begin{tabsection}
 Thus the \v Cech hypercohomology of $\cF^{\bullet}_{*}$, defined as the direct
 limit
 \begin{equation*}
  \check{H}^n(X_\bullet, \cF^{\bullet}_{*}):=  
  \underset{\longrightarrow}{\lim} \check{H}^n_\cU(X_\bullet,\cF^{\bullet}_{*}),
 \end{equation*}
 is isomorphic to the sheaf cohomology $H^n_{sh}(X_\bullet,\cF^{\bullet}_{*})$
 if each covering admits an $\cF$-acyclic refinement. When the existence of
 acyclic coverings is not guaranteed, one needs to take the limit over all
 hyper-coverings but not only coverings of $X_\bullet$ as explained in
 \cite{Friedlander82Etale-homotopy-of-simplicial-schemes}. Then the same result
 holds. However, each covering is in particular a hyper-covering and all our
 constructions will yield cocycles on usual coverings. In addition, the
 equivalences of cocycles that we construct will also live on usual coverings.
 Thus our constructions will lead to well-defined classes in \v{C}ech
 cohomology and thus also in sheaf cohomology.\\
 
 There is one additional condition on \v{C}ech cocycles, that simplifies
 computations a lot, which is the assumption that they are normalized. For this
 we consider
 \begin{equation*}
  \cF_{r,0}^{p}(U^{(p)}_{i_{0}}\cap...\cap U^{(p)}_{i_{q}}):=
  \bigcap _{l=0}^{q}  \ker((\check{s}^{q}_{l})^{*}),
 \end{equation*}
 with
 \begin{equation*}
  \check{s}^{q}_{l}\from U_{i_{0}}^{(p)}\cap...\cap U_{i_{q}}^{(p)}\xrightarrow{\cong} U^{(p)}_{i_{0}}\cap... \cap U_{i_{l}}\cap U_{i_{l}}\cap   ...\cap U^{(p)}_{i_{q+1}},
  \quad
  x_{i_{0},...,i_{q}}\mapsto x_{i_{0},...,i_{l-1},i_{l},i_{l},i_{l+1},...,i_{q}}
 \end{equation*}
 the standard \v{C}ech degeneracy maps and
 \begin{equation*}
  \check{C}_{0}^{p,q,r}:=\bigcap_{k=0}^{p-1}\ker(\sigma_{k})\cap \prod _{i_{0},...,i_{q}\in I^{(p)}}\cF_{r,0}^{p}(U^{(p)}_{i_{0}}\cap...\cap U^{(p)}_{i_{q}})
 \end{equation*}
 with
 \begin{equation*}
  \sigma_{k}\from \check{C}^{p,q,r} \to \check{C}^{p-1,q,r},\quad
  \sigma_k( f)_{i_{0},...,i_{q}}= S_{k}^{p-1}(f_{s_{k}^{p-1}(i_{0}),...,s_{k}^{p-1}(i_{q})} {\circ} s_{k}^{p-1} )
 \end{equation*}
 the degeneracy map induced by the degeneracies
 $s_{k}^{p}\from X_{p}\to X_{p+1}$ and
 $S_{k}^{p-1}\from (s_{k}^{p-1})^{*}\cF^{p}\to\cF^{p-1}$. It is clear that
 $(\check{C}_{0}^{p,q,r},\dsimp,\check{\delta},d)$ is a sub triple complex of
 $(\check{C}^{p,q,r},\dsimp,\check{\delta},d)$ and thus the cohomology of the
 associated total complex
 \begin{equation*}
  \check{H}^{n}_{0,\cU}(X_{\bullet},\cF^{\bullet}_{*})
 \end{equation*}
 comes equipped with a natural morphism
 \begin{equation*}
  \check{H}^{n}_{0}(X_{\bullet},\cF^{\bullet}_{*}):=\underset{\longrightarrow}{\lim}
  \check{H}^{n}_{0,\cU}(X_{\bullet},\cF^{\bullet}_{*})\to
  \check{H}^{n}(X_{\bullet},\cF^{\bullet}_{*})
 \end{equation*}
 from the normalized to the ordinary \v{C}ech cohomology. The normalized
 \v{C}ech cohomology is what we will work with in this article. For its
 conceptual interpretation we will first show that it actually agrees with the
 non-normalized \v{C}ech cohomology and thus with sheaf cohomology in many
 interesting cases.
\end{tabsection}

\begin{proposition}
 Suppose that $\cA=$
 \begin{equation*}
  \vcenter{\xymatrix{
  && A^{p,q+1}
  \ar@<1ex>[d]^{{\ol{i}_{k}^{p,q}}}
  \\
  A^{p-1,q} 
  \ar@<1ex>[rr]^{d^{p,q}_{k}} && 
  A^{p,q} 
  \ar@<1ex>[u]^{\ol{d}_{k}^{p,q+1}}
  \ar@<1ex>[d]^{{\ol{i}_{k}^{p,q-1}}}
  \ar@<1ex>[ll]^{i_{k}^{p-1,q}} 
  \ar@<1ex>[rr]^{d^{p+1,q}_{k}} && 
  A^{p+1,q}
  \ar@<1ex>[ll]^{i_{k}^{p,q}}
  \\
  && A^{p,q-1}
  \ar@<1ex>[u]^{\ol{d}_{k}^{p,q}}
  }}
 \end{equation*}
 is a bi-cosimplicial abelian group. Then the normalized cochains
 \begin{equation*}
  A_{0}^{p,q}=\bigcap_{k=0}^{p-1}\ker(i_{k}^{p-1,q})\cap\bigcap_{k=0}^{q-1}\ker{\ol{i}_{k}^{p,q-1}}
 \end{equation*}
 form a sub bi-cosimplicial abelian group $\cA_{0}$ and the inclusion of the
 associated total complexes $\op{Tot}(\cA_{0})\to \op{Tot}(\cA)$ induces an
 isomorphism in cohomology.
\end{proposition}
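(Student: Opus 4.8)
The plan is to recognize this as the normalization theorem for bi-cosimplicial abelian groups and to reduce it to the classical one-variable normalization theorem, applied once in each direction. First I would dispose of the structural claim by reading ``$\cA_0$ is a sub bi-cosimplicial abelian group'' as the assertion that the two alternating-sum coface differentials $\partial=\sum_k(-1)^k d_k$ and $\ol{\partial}=\sum_k(-1)^k\ol{d}_k$ restrict to $\cA_0$, so that $\op{Tot}(\cA_0)$ is a subcomplex of $\op{Tot}(\cA)$. (The individual cofaces need not preserve $A_0^{p,q}$, but the alternating sums do.) Concretely, for $x\in A_0^{p,q}$ one checks $i_k(\partial x)=0$ and $\ol{i}_k(\partial x)=0$ (and likewise for $\ol{\partial}$); the first uses the cosimplicial identities within the $p$-direction, and the second uses that horizontal and vertical structure maps of a bi-cosimplicial group commute. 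This is the standard opening move of the normalization theorem and I would not belabor it.

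The substance is the isomorphism on cohomology. Here I would recall the classical one-variable statement: for a cosimplicial abelian group the inclusion of the normalized cochain complex $\bigcap_k\ker(s^k)$ into the full cochain complex is a chain homotopy equivalence, where the contracting homotopy is built functorially out of the cofaces and codegeneracies (Dold--Kan / Dold--Puppe, dual to the Moore-complex theorem; see Weibel, \S 8.3). What I want to extract from this is not merely the quasi-isomorphism but the naturality of that homotopy, since it is naturality that lets the argument propagate through the second variable.

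Then I would normalize one direction at a time. Introduce the partially normalized object $\cA'$ with ${A'}^{p,q}=\bigcap_{k}\ker(i^{p-1,q}_k)$, normalized in $p$ only; because the $q$-operators commute with the $p$-codegeneracies, $\cA'$ is again a bi-cosimplicial abelian group, now carrying the full $q$-structure and the residual $p$-structure. For each fixed $q$ the inclusion ${A'}^{\bullet,q}\hookrightarrow A^{\bullet,q}$ is exactly the one-variable normalization in the $p$-direction, hence a chain homotopy equivalence with a natural homotopy $h$. Since $h$ is assembled from horizontal operators, it commutes with $\ol{\partial}$, and a short computation with the total differential $D=\partial+(-1)^p\ol{\partial}$ shows that the cross terms cancel, so the column-wise homotopies combine into a genuine homotopy on $\op{Tot}$; thus $\op{Tot}(\cA')\to\op{Tot}(\cA)$ is a quasi-isomorphism. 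Repeating the argument for the inclusion $\cA_0\hookrightarrow\cA'$ in the $q$-direction and composing yields $\op{Tot}(\cA_0)\xrightarrow{\ \cong\ }\op{Tot}(\cA)$. An equivalent route, avoiding explicit homotopies, is the spectral-sequence comparison: filtering by columns, the inclusion induces on the $E_1$-page precisely the one-variable normalization isomorphism in each row, and since $\cA$ is a first-quadrant bicomplex (each total degree $p+q$ involves only finitely many summands) both spectral sequences converge and the comparison theorem forces an isomorphism on the abutment.

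The only genuine obstacle is that everything rests on the commutativity of horizontal and vertical structure maps, and this must be used twice: once to guarantee that the partially normalized $\cA'$ is still bi-cosimplicial (so the one-variable theorem can be iterated in the $q$-direction), and once to guarantee that the homotopy for one direction commutes with the differential of the other (so the slice-wise homotopies assemble on $\op{Tot}$). Granting this commutativity, the Koszul signs of the total differential cancel in the cross terms automatically, so no delicate sign analysis is required; and the iteration is legitimate precisely because normalizing the second variable does not disturb the first normalization. The spectral-sequence formulation simply repackages the same commutativity as the compatibility of the two filtrations, with convergence coming for free from the first-quadrant shape.
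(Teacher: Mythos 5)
Your proposal is correct and follows essentially the same route as the paper: the paper also normalizes one cosimplicial direction at a time (introducing the vertically normalized $\cA_{v}$, observing $\cA_{0}=(\cA_{v})_{h}$) and invokes the dual Dold--Kan correspondence \cite[Corollary 8.4.3]{Weibel94An-introduction-to-homological-algebra} in each variable. The only difference is that you spell out why the column-wise quasi-isomorphisms pass to the total complexes (via assembled homotopies or the column filtration spectral sequence), a step the paper leaves implicit.
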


\begin{proof}
 We first define the vertically normalized cochains $\cA_{v}$ to be
 $A_{v}^{p,q}=\cap_{k=0}^{p-1}\ker(i_{k}^{p-1,q})$, which is also a sub
 bi-cosimplicial abelian group of $\cA$. Likewise, we define the horizontally
 normalized cochains $\cA_{h}$ to be given by
 $A_{h}^{p,q}=\cap_{k=0}^{q-1}\ker(\ol{i}_{k}^{p,q-1})$. Observe that
 $\cA_{0}=(\cA_{v})_{h}$. By the dual Dold-Kan correspondence \cite[Corollary
 8.4.3]{Weibel94An-introduction-to-homological-algebra} the cochain complex of
 a cosimplicial abelian group has the same cohomology as  its
 normalized sub complex. Thus $\op{Tot}(\cA_{v})\to \op{Tot}(\cA)$ and
 $\op{Tot}((\cA_{v})_{h})\to \op{Tot}(\cA_{v})$ induce isomorphisms in
 cohomology.
\end{proof}
 
\begin{Corollary}\label{cor:normalization}
 If $X_{\bullet}$ is a simplicial manifold and $\cF^{\bullet}_{*}$ is a complex
 of sheaves on $X_{\bullet}$, then the canonical morphism
 \begin{equation}\label{eqn:morphism_of_delta_functos}
  \check{H}^{n}_{0}(X_{\bullet},\cF^{\bullet}_{*})\to   
  \check{H}^{n}(X_{\bullet},\cF^{\bullet}_{*})
 \end{equation}
 is an isomorphism.
\end{Corollary}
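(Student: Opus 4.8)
The plan is to deduce the Corollary from the preceding Proposition on bi-cosimplicial abelian groups, by isolating the sheaf-complex index $r$ as a separate direction in which we do \emph{not} normalize. Everything takes place for a fixed covering $\cU$; the passage to the direct limit over coverings is harmless since direct limits are exact and thus turn the per-covering isomorphisms $\check{H}^{n}_{0,\cU}(X_\bullet,\cF^\bullet_*)\to\check{H}^{n}_{\cU}(X_\bullet,\cF^\bullet_*)$ into the asserted isomorphism \eqref{eqn:morphism_of_delta_functos}.

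First I would fix $r$ and exhibit $(\check{C}^{p,q,r})_{p,q}$ as a bi-cosimplicial abelian group in the sense of the Proposition. The cosimplicial structure in the simplicial direction $p$ is induced by the structure maps of $X_\bullet$: the face maps $d^p_k$ together with the comparison morphisms $D^p_k$ provide the cofaces, while the degeneracies $s^p_k$ together with $S^{p-1}_k$ provide the codegeneracies, which are exactly the maps $\sigma_k$ used to define $\check{C}^{p,q,r}_0$. The cosimplicial structure in the \v{C}ech direction $q$ is the usual \v{C}ech nerve, whose codegeneracies are the standard maps $\check{s}^q_l$ whose kernels cut out $\cF^p_{r,0}$. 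By construction the differentials $\dsimp$ and $\cd$ are precisely the alternating-sum differentials of these two cosimplicial structures, so for fixed $r$ the totalization $\op{Tot}_{p,q}$ computes the cohomology in the $(p,q)$-directions, and the normalized subobject $\check{C}^{p,q,r}_0$ coincides with the double normalization $\cA_0$ of the Proposition. Consequently the Proposition applies at each level $r$ and yields that the inclusion $\op{Tot}_{p,q}(\check{C}^{\bullet,\bullet,r}_0)\hookrightarrow\op{Tot}_{p,q}(\check{C}^{\bullet,\bullet,r})$ is an isomorphism in cohomology.

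It then remains to promote this levelwise quasi-isomorphism to the full total complex $\check{C}^{N}=\bigoplus_{N=p+q+r}\check{C}^{p,q,r}$. I would regard the triple complex as a double complex, one axis being the totalized $(p,q)$-complex with differential $\dsimp+(-1)^p\cd$ and the other being the sheaf-complex direction $r$ with differential $d$; the inclusion of normalized cochains is then a morphism of double complexes which, by the previous paragraph, is a quasi-isomorphism on every $r$-column. Because $\cF^\bullet_*$ is bounded below and $p,q\ge 0$, the whole triple complex is supported in a region bounded below in all three gradings, so the spectral sequence obtained by filtering in the $r$-direction converges. Comparing the two spectral sequences, the columnwise isomorphism gives an isomorphism on the $E_1$-page and hence on the abutment (equivalently, one may apply the acyclic-assembly lemma to the mapping cone, whose $(p,q)$-totalization is acyclic at each $r$).

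The genuinely content-bearing step is the first one, namely checking that the maps $\sigma_k$ and $\check{s}^q_l$ satisfy the cosimplicial identities and agree with the codegeneracies $i_k$, $\ol{i}_k$ of the Proposition, so that $\check{C}^{p,q,r}_0$ really is the bi-cosimplicial normalization $\cA_0$; this is a bookkeeping of signs and indices that is routine but error-prone. Once the bi-cosimplicial identification is in place, the remaining convergence argument is standard homological algebra, so I expect the matching of the normalization data to be the main obstacle rather than the cohomological comparison itself.
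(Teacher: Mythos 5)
Your proposal is correct and follows the same route the paper intends: the Corollary is stated as an immediate consequence of the preceding Proposition, applied for each fixed $r$ to the bi-cosimplicial abelian group in the $(p,q)$-directions (with $\sigma_k$ and $\check{s}^q_l$ as the two families of codegeneracies), followed by the standard passage from a levelwise quasi-isomorphism to the total complex and the exact direct limit over coverings. The details you supply (the spectral-sequence comparison in the $r$-direction and the verification of the cosimplicial identities) are exactly the bookkeeping the paper leaves implicit.
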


\begin{remark}\label{rem:explicit_form_of_differentiable_cocycles}
 We spell out the definition of \v{C}ech hypercohomology from Definition
 \ref{def:hypercohomology} in the case that we are interested in for
 convenience and later reference. First note that for the constant simplicial
 manifold associated to a manifold $M$ (cf.\ Example \ref{ex:nerve}) the
 \v{C}ech hypercohomology is what is also called the non-abelian cohomology
 $\check{H}^2(M , A \xrightarrow{\xmodmap} B)$ with coefficients in the crossed
 module $\xmodmap\from A\to B$ (see for instance
 \cite{NikolausWaldorf11Four-Equivalent-Versions-of-Non-Abelian-Gerbes,Wockel09Principal-2-bundles-and-their-gauge-2-groups,BaezSchreiber07Higher-gauge-theory,Bartels06Higher-gauge-theory-I:-2-Bundles,Breen94On-the-classification-of-2-gerbes-and-2-stacks,Giraud71Cohomologie-non-abelienne,Dedecker60Sur-la-cohomologie-non-abelienne.-I}).

 In this article we will mainly be interested in the simplicial manifold
 $\bB G_\bullet$ the nerve of the infinite-dimensional Lie groupoid
 $G\rrarrow *$\footnote{If $\dim(G)<\infty$, then $\pi_{2}(G)=0$ and the
 integration procedure we consider here is covered by
 \cite{Neeb02Central-extensions-of-infinite-dimensional-Lie-groups}.} with the
 2-term complex of sheaves of germs of smooth functions with value in $A$ and
 $B$, where $A\xrightarrow{\xmodmap}B$ is a morphism of abelian Lie groups.
 
 Then our triple complex is
 \begin{equation}\label{eqn:triple-complex}
  \check{C}^{p,q,-1}= \prod_{i_{0},...,i_{q}\in I^{(p)}} C^\infty (U_{i_{0}}^{(p)}\cap\dots\cap U_{i_{q}}^{(p)}, A)\quad\text{ and }\quad 
  \check{C}^{p,q,0}= \prod_{i_{0},...,i_{q}\in I^{(p)}} C^\infty (U_{i_{0}}^{(p)}\cap\dots\cap U_{i_{q}}^{(p)}, B),
 \end{equation}
 with $d=\xmodmap_*$, i.e.,
 \begin{equation*}
  (\xmodmap_{*}f)_{i_{0},...,i_{q}}(x)=
  \xmodmap (f_{i_{0},...,i_{q}}(x)).
 \end{equation*}
 
 Then a 2-cocycle of the total differential in this triple complex is given by
 maps (we also write down the slightly more intuitive names of them that we
 will use in the geometric construction later on):
 \begin{align*}
  & \phi^{1,1,0}(=\gamma=(\gamma_{i,j}))\from U^{(1)}_{i}\cap
  U^{(1)}_{j}\to B, \forall {i,j\in I^{(1)}}\\
  & \phi^{1,2,-1}(=\eta=(\eta_{i,j,k}))\from U^{(1)}_{i}\cap
  U^{(1)}_{j}\cap U^{(1)}_{k}\to A, \forall {i,j,k\in I^{(1)}}\\
  & \phi^{2,0,0}(=F=(F_{i}))\from U^{(2)}_{i}\to B, \forall {i\in I^{(2)}}\\
  & \phi^{2,1,-1}(=\Phi=(\Phi_{i,j}))\from U^{(2)}_{i}\cap
  U^{(2)}_{j}\to A, \forall {i,j\in I^{(2)}}\\
  & \phi^{3,0,-1}(=\Theta=(\Theta_{i}))\from U^{(3)}_{i}\to A,
  \forall {i\in I^{(3)}}
 \end{align*}
 such that (when $r=0$)
 \begin{align} 
  d_{0,1,0}(\phi^{1,2,-1})&(=\check{\delta}(\eta))=0 \label{eq:120} \\
  d_{1,0,0}(\phi^{1,2,-1})+d_{0,1,0}(\phi^{2,1,-1})&(=\dsimp(\eta)\highlight{+}\check{\delta}(\Phi))=0\label{eq:210-120}\\
  d_{1,0,0}(\phi^{2,1,-1})-d_{0,1,0}(\phi^{3,0,-1})&(=\dsimp(\Phi)-\check{\delta}(\Theta))=0\label{eq:300-210}\\
  d_{1,0,0}(\phi^{3,0,-1})&(=\dsimp(\Theta))=0\label{eq:300}
 \end{align}
 and (involving mixings of $r=0$ and $r=1$)
 \begin{align} %
  \highlight{-}d_{0,1,0}(\phi^{1,1,0})-d_{0,0,1}(\phi^{1,2,-1})&(=\highlight{-}\check{\delta}(\gamma)-\xmodmap_{*}(\eta))=0 \label{eq:111-120}\\
  d_{1,0,0}(\phi^{1,1,0})+d_{0,1,0}(\phi^{2,0,0})-d_{0,0,1}(\phi^{2,1,-1})&(=\dsimp(\gamma)+\check{\delta}(F)-\xmodmap_{*}(\Phi))=0 \label{eq:201-210-111}\\
  d_{1,0,0} (\phi^{2,0,0})-  d_{0,0,1}(\phi^{3,0,-1})&(=\dsimp (F)-\xmodmap_{*}(\Theta))=0 \label{eq:300-201}
 \end{align}
 
 Two differentiable 2-cocycles
 $(\phi^{1,1,0},\phi^{1,2,-1},\phi^{2,0,0},\phi^{2,1,-1},\phi^{3,0,-1})$ and
 $(\tphi^{1,1,0},$ $\tphi^{1,2,-1},$ $\tphi^{2,0,0},$ $\tphi^{2,1,-1},$
 $\tphi^{3,0,-1})$ are called \emph{equivalent} if they differ by a coboundary,
 that is, there exists
 \begin{align*}
  &\psi^{1,0,0}(=\xi=(\xi_{i}))\from (U'')_{i}^{(1)}\to B\\
  &\psi^{1,1,-1}(=\rho=(\rho_{i,j}))\from (U'')_{i}^{(1)}\cap (U'')_{j}^{(1)}\to A\\
  &\psi^{2,0,-1}(=\sigma=(\sigma_{i})\from (U'')_{i}^{(2)}\to A
 \end{align*}
 such that
 \begin{align}
  -d_{0,1,0}(\psi^{1,1,-1})&=\phi^{1,2,-1}-\tphi^{1,2,-1}(=-\check{\delta}(\rho)=\eta-\eta')\label{110}\\
  d_{1,0,0}(\psi^{1,1,-1})+d_{0,1,0}(\psi^{2,0,-1})&=\phi^{2,1,-1}-\tphi^{2,1,-1}(=\dsimp(\rho)+\check{\delta}(\sigma)=\Phi-\Phi')\label{110-200}\\
  d_{1,0,0}(\psi^{2,0,-1})&=\phi^{3,0,-1}-\tphi^{3,0,-1}(=\dsimp(\sigma)=\Theta-\Theta')\label{200}\\
  -d_{0,1,0}(\psi^{1,0,0})+d_{0,0,1}(\psi^{1,1,-1})&=\phi^{1,1,0}-\tphi^{1,1,0}(=-\check{\delta}(\xi)+\xmodmap_{*}(\rho)=\gamma-\gamma') \label{101-110}\\
  d_{1,0,0}(\psi^{1,0,0})+d_{0,0,1}(\psi^{2,0,-1})&=\phi^{2,0,0}-\tphi^{2,0,0}(=\dsimp(\xi)+\xmodmap_{*}(\sigma)=F-F') \label{101-200}
 \end{align}
 holds on a common refinement\emptycomment{\footnote{To be precise, we need to take the
 common refinement to be a hyper-covering, that is
 $(U'')_{i}^{(1)}\cap (U'')_{j}^{(1)}$ needs to be replaced by a cover of it.
 However in our application in Section \ref{sec:geo-cocycle}, we can obtain a
 coboundary in this stricter sense. Thus for the application in the case of
 geometric cocycles and to avoid clumsy notations, we avoid taking a further
 refinement here.}} $\cU''$ of the two simplicial covers
 $(U_{i}^{(n)})_{i\in I^{(n)}}$ and $({U'}_{i}^{(n)})_{i\in (I')^{(n)}}$.
 \emptycomment{\cccomment{The collection
 $(\psi^{1,0,1},\psi^{1,1,0},\psi^{2,0,0})$ is then also called a
 \emph{coboundary} between
 $(\phi^{1,1,1},\phi^{1,2,0},\phi^{2,0,1},\phi^{2,1,0},\phi^{3,0,0})$ and
 $(\tphi^{1,1,1},\tphi^{1,2,0},\tphi^{2,0,1},\tphi^{2,1,0},\tphi^{3,0,0})$.
 Only $D(\psi^{1,0,1},\psi^{1,1,0},\psi^{2,0,0})$ is the coboundary. }}

\end{remark}

\subsection{From differentiable hypercohomology to Lie 2-groups}\label{sect:from_differentiable_hypercohomology_to_group_stacks}

\begin{tabsection}
 In this section we will describe how to construct Lie 2-groups form
 differentiable cocycles, similar to \cite[Theorem
 99]{Schommer-Pries10Central-Extensions-of-Smooth-2-Groups-and-a-Finite-Dimensional-String-2-Group}.
 
 We first introduce the concept of a group object in a bicategory and
 afterwards the corresponding notions of extensions and central extensions of
 \tgps. We will be brief on this, our main reference is
 \cite{Schommer-Pries10Central-Extensions-of-Smooth-2-Groups-and-a-Finite-Dimensional-String-2-Group}.
\end{tabsection}

\begin{definition}\label{defi:2gp}
 Let $\cat{C}$ be a bicategory with finite products. A {\em group object in
 $\cat{C}$} (or {\em $\cat{C}$-group,} for brevity) consists of the following
 data:
 \begin{itemize}
  \item an object $G$ in $\cat{C}$
  \item a list of 1-morphisms
        \begin{itemize}
         \item[] \makebox [100pt] [l] {$m: G \times G \to G$} (the {\em
               multiplication})
         \item[] \makebox [100pt] [l] {$u: * \to G$} (the {\em unit})
        \end{itemize}
        such that
        \begin{equation} \label{eq:inverse}
         (pr_1,m) : G \times G \to G \times G
        \end{equation}
        is an equivalence in $\cat{C}$.
  \item a list of invertible 2-morphisms
        \begin{align*}
         &a: m \circ (m \times \id) \Rightarrow m \circ (\id \times m) & & \quad \text{(the {\em associator})}\\
         \begin{split}
         &\ell: m \circ (u \times\id) \Rightarrow \id\\
         &r: m \circ (\id \times u) \Rightarrow \id
         \end{split} \Biggr\} & & \quad \text{(the {\em left,} resp.\ {\em
         right unit constraint})}
        \end{align*}
 \end{itemize}
 subject to the requirement that certain coherence conditions hold. A
 $\cat{ C}$-group is {\emph{strict}} if all the 2-morphisms above are identity
 2-morphisms.
 
 A {\em 1-morphism} $G\to H$ of $\cat{C}$-groups consists of a morphism
 $F\from G\to H$ in $\cat{C}$ and invertible 2-morphisms
 $F_{2}\from m_{H}{\circ}(F\times F)\Rightarrow F {\circ} m_{G}$ and
 $F_{0}\from F {\circ} u_{G}\Rightarrow u_{H}$ satisfying the corresponding
 coherence conditions. Likewise, a 2-morphism between 1-morphisms of
 $\cat{C}$-groups consists of a 2-morphism between the underlying 1-morphisms
 in $\cat{C}$ satisfying a certain coherence condition.
 
 We refer to \cite[Definition 41, 42,
 43]{Schommer-Pries10Central-Extensions-of-Smooth-2-Groups-and-a-Finite-Dimensional-String-2-Group},
 \cite[\S 4.3]{Blohmann08Stacky-Lie-groups} and \cite[p.\
 37]{BaezLauda04Higher-dimensional-algebra.-V.-2-groups} (the latter in the
 case that $\cat{C}$ is actually a strict 2-category) for the various coherence
 conditions mentioned above.
\end{definition}

\begin{definition}\label{def:Lie_2-group}
 Let $\sSt$ (respectively $\eSt$) be the bicategory of (respectively \'etale)
 Lie groupoids, i.e., objects are (respectively \'etale) Lie groupoids,
 1-morphisms are generalized morphisms and 2-morphisms are morphisms between
 generalized morphisms (see Appendix \ref{sec:grothendieck-pretop} for
 details). Then a group object in $\sSt$ is also called a \emph{\tgp}. The
 corresponding bicategory is denoted $\cat{\tgps}$. A Lie 2-group is \'etale if
 it is further a group object in $\eSt$.
\end{definition}

Notice that our notion of a Lie 2-group is equivalent to the notion of \tgp
 from \cite{Getzler09Lie-theory-for-nilpotent-Linfty-algebras,Henriques08Integrating-Lsb-infty-algebras}, defined by pointed simplicial
manifolds satisfying Kan conditions $Kan(n, j)$ for all $0\le j \le n$ and $Kan!(n,
j)$ for all $0\le j \le n \ge 3$. This has been proven in
 \cite{Zhu09n-groupoids-and-stacky-groupoids}.

\begin{definition}
 An abelian $\cat{C}$-group in a bicategory $\cat{C}$ with finite products
 is a group object $(G,m,u,a,l,r)$, together with an invertible 2-morphism
 $\beta\from m\Rightarrow m {\circ} T$, where
 $T\from G\times G\to G\times G$ is the canonical flip automorphism, such that
 the corresponding coherence conditions \cite[Definition
 47]{Schommer-Pries10Central-Extensions-of-Smooth-2-Groups-and-a-Finite-Dimensional-String-2-Group}
 are fulfilled.
 
 A 1-morphism of abelian 2-groups consists of a 1-morphism of the underlying
 $\cat{C}$-groups making the diagram from \cite[Definition
 48]{Schommer-Pries10Central-Extensions-of-Smooth-2-Groups-and-a-Finite-Dimensional-String-2-Group}
 commute. A 2-homomorphism of abelian 2-groups consists of an arbitrary
 2-morphism between 1-morphisms of abelian $\cat{C}$-groups.
\end{definition}

\begin{example}\label{ex:Lie-2-groups}
 \begin{itemize}
  \item [a)] If $G$ is an arbitrary Lie group, then the Lie groupoid with
        objects and morphisms equal to $G$ and structure maps equal to
        $\id_{G}$ gets a strict group object in $\sSt$ if we take the
        multiplication to be induced my the multiplication morphism on $G$ and
        the uni to be the inclusion of the unit element (and all 2-morphisms to
        be trivial). Since the inclusion $\cat{Man}\to \sSt$ preserves products
        this is just the image in $\sSt$ of the group object $G$ in
        $\cat{Man}$. We will denote this \tgp by $\disc{G}$.
  \item [b)] If $\xmodmap\from A\to B$ is a morphism of abelian Lie groups,
        then we get a Lie groupoid $(A\times B\rrarrow B)$ with 
        $s(a,b)=b$, $t(a,b)=\xmodmap(a)b$, $i_{b}=(e_{A},b)$ and
        $(a',\xmodmap(a)b){\circ} (a,b)=(a'a,b)$. This inherits the
        structure of a strict group object in $\sSt$ from the group
        multiplication on $A\times B$ and $B$ (thus $m$ is a honest morphism
        of Lie groupoids), which is abelian (where we may choose $\beta$ to be
        the identity). We will denote this abelian \tgp by
        $[A\xrightarrow{\xmodmap}B]$.
 \end{itemize}
\end{example}

\begin{definition}
 An \emph{extension} of $\disc{G}$ by $[A\xrightarrow{\xmodmap}B]$ consists of
 \begin{itemize}
  \item a Lie 2-group $\wh{G}$
  \item 1-morphisms $[A\xrightarrow{\xmodmap}B] \xrightarrow{p} \wh{G}$ and
        $\wh{G}\xrightarrow{q} G$
 \end{itemize}
 such that their composition is equal\footnote{Note that equality is the only
 sensible thing here since $\disc{G}$ is a discrete groupoid.} to the canonical
 1-morphism $[A\xrightarrow{\xmodmap}B]\to * \to \disc{G}$ and that
 $\wh{G}\to \disc{G}$ is a principal
 $[A\xrightarrow{\xmodmap}B]$-2-bundle (cf.\
 \cite{Schommer-Pries10Central-Extensions-of-Smooth-2-Groups-and-a-Finite-Dimensional-String-2-Group,
 NikolausWaldorf11Four-Equivalent-Versions-of-Non-Abelian-Gerbes,
 Wockel09Principal-2-bundles-and-their-gauge-2-groups}\footnote{Unlike in
 \cite{Wockel09Principal-2-bundles-and-their-gauge-2-groups}, we do here allow
 arbitrary morphisms of smooth stacks as local trivializations, not only those
 which are represented by smooth functors, cf.\
 \cite{Schommer-Pries10Central-Extensions-of-Smooth-2-Groups-and-a-Finite-Dimensional-String-2-Group,
 NikolausWaldorf11Four-Equivalent-Versions-of-Non-Abelian-Gerbes}.}).
 Two extensions
 $[A\xrightarrow{\xmodmap}B]\xrightarrow{p} \wh{G}\xrightarrow{q}\disc{G}$ and
 $[A\xrightarrow{\xmodmap}B]\xrightarrow{p'} \wh{G}'\xrightarrow{q'}\disc{G}$
 are \emph{equivalent} if there exist a 1-morphism $f\from\wh{G}\to \wh{G}'$
 and a 2-morphisms $\lambda\from f {\circ} p\Rightarrow p'$
 \begin{equation*}
  \vcenter{\xymatrix@=1em{
  &\wh{G}\ar[drr]|{\blabel{q}}="t"\\
  [A\xrightarrow{\xmodmap}B]&&&\disc{G}\\
  &\wh{G}'\ar[urr]|{\blabel{q'}}
  \ar "2,1"+<.5em,.75em>;   "1,2" |(.4){\blabel{p}}
  \ar "2,1"+<.5em,-.75em>;  "3,2" |(.4){\blabel{p}'}="s"
  \ar "1,2"-<.1em,1em>; "3,2"+<-.1em,1em> |{\blabel{f}}
  \ar@{=>} "1,2"-<.5em,1em>; "s"+<.125em,.75em>|{\blabel{\lambda}}
  \ar@{=} "t"; "3,2"
  }}
 \end{equation*}
 such that $q= q' \circ f$.
 In this case we also call $(f,\lambda)$ an \emph{equivalence} of central
 extensions.
\end{definition}

\begin{example}\label{ex:2-connected-cover}
 Suppose $G$ is a 1-connected Lie group. The space
 \begin{equation*}
  P_{e}G:=\{\gamma\in C([0,1],G)\mid \gamma(0)=e\}
 \end{equation*}
 of continuous pointed paths in $G$ is again a Lie group
 \cite{GlocknerNeeb14Infinite-dimensional-Lie-groups} with respect to the
 topology of uniform convergence and point-wise multiplication. Thus the
 evaluation map $\ev\from P_{e}G\to G$, $\gamma\mapsto \gamma(1)$ is a smooth
 group homomorphism and has a smooth section $\sigma\from U\to P_{e}G$ on some
 identity neighborhood $U\se G$. We extend this to a (in general
 non-continuous) section $\sigma\from G\to P_{e}G$. It follows from the
 existence of a smooth local section that $\ev$ is a submersion \cite[Appendix
 B]{NikolausSachseWockel13A-smooth-model-for-the-string-group}. The kernel
 $\ker(\ev)$ is the pointed (continuous) loop group $\Omega G$, which has the
 universal covering
 \begin{equation*}
  \pi_{2}(G)\to \wt{\Omega G}\to \Omega G.
 \end{equation*}
 Since continuous group automorphisms of $\Omega G$ lift in a unique way to
 group automorphisms of $\wt{\Omega G}$, we have that $P_{e}G$ acts by a lift
 of the conjugation action (from the right) on $\wt{\Omega G}$, which is smooth
 since $\pi_{2}(G)$ is discrete. Thus this action, along with the canonical map
 $\tau\from\wt{\Omega G}\to P_{e} G$ is a smooth crossed module and thus
 determines a \tgp $\Pi_{2}(G)$ \cite[Example
 4.3]{NikolausSachseWockel13A-smooth-model-for-the-string-group}, \cite[Remark
 2.4]{Wockel09Principal-2-bundles-and-their-gauge-2-groups},
 \cite{Porst08Strict-2-Groups-are-Crossed-Modules,Forrester-Barker02Group-Objects-and-Internal-Categories,BrownSpencer76G-groupoids-crossed-modules-and-the-fundamental-groupoid-of-a-topological-group}.
 It is of a quite simple nature, since the multiplication and inversion
 morphisms are represented by smooth functors on the action groupoid $\Gamma$
 of the action of $\wt{\Omega G}$ on $P_{e}G$ induced by $\tau$.
 
 Now $\Pi_{2}(G)$ comes along with a homomorphism of \tgps into $G$, induced by $\ev$.
 Moreover, $[\pi_{2}(G)\to 0]$ embeds canonically into $\Pi_{2}(G)$ if we
 consider $\pi_{2}(G)$ as a subgroup of $\wt{\Omega G}$. We thus obtain a
 sequence of \tgps
 \begin{equation*}
  [\pi_{2}(G)\to 0]\to \Pi_{2}(G)\to \disc{G}.
 \end{equation*}
 
 That this is in particular an extension we have to check that
 $\Pi_{2}(G)\to \disc{G}$ is a principal 2-bundle. For this it suffices to
 observe that over $U_{g}:=g\cdot U$ we have the smooth section
 $\sigma_{g}( x)=\sigma(g)\cdot \sigma(g^{-1}\cdot x) $ of $\ev$ and that this
 induces a smooth functor
 \begin{equation*}
  [\pi_{2}(G)\rightarrow 0]\times\disc{U_{g}}\to \left.\Pi_{2}(G)\right|_{U_{g}},\quad
  (a,\id_{x})\mapsto (a,\sigma_{g}(x)).
 \end{equation*}
 The latter can easily be shown to be a weak equivalence.
\end{example}

\begin{Lemma}
 \cite[Lemma
 82]{Schommer-Pries10Central-Extensions-of-Smooth-2-Groups-and-a-Finite-Dimensional-String-2-Group}
 Let $[A\xrightarrow{\xmodmap}B]\to \wh{G}\to\disc{G}$ be an extension. Then
 there exists a 1-morphism $\disc{G}\to Aut([A\xrightarrow{\xmodmap}B])$ of
 2-groups\footnote{2-groups are understood as group objects in the 2-category
 of categories.}, unique up to unique 2-morphism.
\end{Lemma}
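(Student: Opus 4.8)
The plan is to realize the sought 1-morphism as the conjugation action of $\wh{G}$ on its normal sub-2-group $p([A\xrightarrow{\xmodmap}B])=\ker(q)$, and then to show that this action descends along $q\from\wh{G}\to\disc{G}$ precisely because the fibre is abelian. Conjugation in $\wh{G}$ assigns to each element an inner automorphism of $\wh{G}$ that preserves the normal sub-2-group, hence an automorphism of $[A\xrightarrow{\xmodmap}B]$; packaged together this gives a 1-morphism $c\from\wh{G}\to\Aut([A\xrightarrow{\xmodmap}B])$ of 2-groups, whose multiplicativity 2-morphisms $c_{2}$ and $c_{0}$ are built from the associator $a$ and the unit constraints $\ell,r$ of $\wh{G}$. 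The whole content of the lemma is then that $c$ factors, up to coherent 2-morphism, through $q$.

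First I would make precise the statement that $c$ is trivial on the fibre. The key computation is that conjugation by a fibre element $p(x)$, i.e.\ the automorphism $b\mapsto p(x)\,b\,p(x)^{-1}$ of $[A\xrightarrow{\xmodmap}B]$, is canonically $2$-isomorphic to $\id$: since $[A\xrightarrow{\xmodmap}B]$ is abelian, the braiding $\beta\from m\Rightarrow m{\circ}T$ together with the inversion $2$-morphisms furnishes an invertible $2$-morphism from $c_{p(x)}$ to the identity automorphism. Chasing the coherence of $\beta$ against the associator $a$ shows that these trivializations are themselves coherent, so that $c{\circ}p$ is $2$-isomorphic to the trivial homomorphism $[A\xrightarrow{\xmodmap}B]\to*\to\Aut([A\xrightarrow{\xmodmap}B])$.

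Next I would descend $c$ using the principal 2-bundle structure of the extension. Because $\wh{G}\to\disc{G}$ is a principal $[A\xrightarrow{\xmodmap}B]$-2-bundle, $G$ carries an open cover $(U_{i})$ with local trivializing sections $s_{i}\from\disc{U_{i}}\to\wh{G}$, and I would set $\alpha_{i}:=c{\circ}s_{i}$. On each overlap the two sections differ by a fibre-valued transition, so by the triviality established in the previous step the local homomorphisms $\alpha_{i}$ and $\alpha_{j}$ agree up to a canonical invertible $2$-morphism $\lambda_{ij}$. The cocycle identity for the $\lambda_{ij}$ — again a consequence of the coherence of $a$ and $\beta$ — provides descent data gluing the $\alpha_{i}$ into a single 1-morphism $\disc{G}\to\Aut([A\xrightarrow{\xmodmap}B])$, with its multiplicativity data inherited from $c_{2}$ and $c_{0}$.

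Finally, uniqueness up to unique $2$-morphism follows because $q$ is essentially surjective (a principal 2-bundle projection is an epimorphism of stacks): any two 1-morphisms $\disc{G}\to\Aut([A\xrightarrow{\xmodmap}B])$ that pull back along $q$ to $2$-isomorphic homomorphisms on $\wh{G}$ are related by a unique $2$-morphism. The hard part will be the coherence bookkeeping in the middle two steps, namely verifying that the fibrewise trivializations and the descent $2$-morphisms $\lambda_{ij}$ are simultaneously compatible with the pentagon and unit coherence required of a homomorphism of 2-groups, rather than merely existing object by object.
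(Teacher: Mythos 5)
The paper does not prove this lemma at all --- it is imported verbatim from \cite[Lemma 82]{Schommer-Pries10Central-Extensions-of-Smooth-2-Groups-and-a-Finite-Dimensional-String-2-Group} --- so there is no internal proof to compare against; measured against the argument in the cited source, your sketch is the standard and correct one (conjugation of $\wh{G}$ on its kernel, canonical trivialization of the restriction to the fibre via the symmetry $\beta$ of $[A\xrightarrow{\xmodmap}B]$, descent along the principal $[A\xrightarrow{\xmodmap}B]$-2-bundle $\wh{G}\to\disc{G}$). Two places where your outline is thinner than what is actually needed: first, gluing the local maps $\alpha_{i}=c{\circ} s_{i}$ via the $\lambda_{ij}$ only produces a 1-morphism of \emph{stacks} $\disc{G}\to \Aut([A\xrightarrow{\xmodmap}B])$, and the multiplicativity data cannot simply be ``inherited from $c_{2}$, $c_{0}$'' --- the sections $s_{i}$ are not group homomorphisms, so one must compare $s_{i}(g)s_{j}(h)$ with $s_{k}(gh)$ through a fibre element and invoke the fibrewise trivialization again to build the descended $F_{2}$; second, note that conjugation in a group object of a bicategory is itself only defined up to coherent 2-morphism (inversion comes from the equivalence $(\pr_{1},m)$, not from a strict inverse map), which adds to, but does not obstruct, the coherence bookkeeping you already flag as the hard part. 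Neither point is a gap in the strategy, only in the level of detail.
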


\begin{definition}
 An extension $[A\xrightarrow{\xmodmap}B]\to \wh{G}\to\disc{G}$ is
 \emph{central} if the 1-morphism from the preceding Lemma is 2-equivalent to
 the trivial one.
\end{definition}

\begin{remark}\label{rk:inverse}
 If $A$ is a Lie group and $G$ a Lie group, then we define a Lie monoid
 extension of $G$ by $A$ to be a principal $A$-bundle $\wh{G}\to G$ which is a
 Lie monoid such that $\wh{G}\to G$ is a homomorphism of monoids, as well as
 the inclusion $A\to \wh{G}$, $a\mapsto e_{\wh{G}}.a$.%
 
 We will now see that $\wh{G}\to G$ is already an extension of Lie
 \emph{groups}, i.e., that
 $pr_1 \times m \from\wh{G}\times\wh{G}\to\wh{G}\times \wh{G}$ is a
 diffeomorphism. In fact, consider the factorization of
 $\pr\times m_{\wh{G}}=p \circ \beta$ through the canonical maps to and from
 the pull-back:
 \begin{equation*}
  \vcenter{\xymatrix{
  A\times A 
  \ar[rr]^{\pr\times m_{A}} \ar[d] &&
  A \times A
  \ar[d]^{\alpha} \ar@{=}[rr] &&
  A\times A
  \ar[d]\\
  \wh{G}\times\wh{G}
  \ar[rr]^(.4){\beta} \ar[d] &&
  (\pr\times m_{G})^{*}(G\times G)
  \ar[d]^{q}\ar[rr]^(.6){p} &&
  \wh{G}\times\wh{G}
  \ar[d]\\
  G \times G
  \ar@{=}[rr] &&
  G\times G 
  \ar[rr]^{\pr\times m_{G}} &&
  G \times G}},
 \end{equation*}
 where $p$ and $q$ are the canonical maps and $\alpha$ and $\beta$ are induced
 maps into the pull-back. Since $\pr\times m_{G}$ is a diffeomorphism and the
 pull-back is functorial it follows that $p$ is a diffeomorphism. Since
 $\pr\times m_{A}$ is an isomorphism of Lie groups we also have that $\beta$ is
 invertible.
\end{remark}

The remainder of this section will be devoted to the proof of the following
Theorem.

\begin{Theorem}\label{thm:from_Cech_Cohomology_to_Extensions}
 Given a morphism $A \xrightarrow{\xmodmap} B$ of abelian Lie groups and an
 arbitrary Lie group $G$ we have a well-defined map
 \begin{equation*}
  \check{H}^{3}(\bB G_{\bullet},A\xrightarrow{\xmodmap}B)\to
  \op{Ext}(\ul{G},[A\xrightarrow{\xmodmap}B]),
 \end{equation*}
 where $\op{Ext}(\ul{G},[A\xrightarrow{\xmodmap}B])$ denotes the equivalence classes
 of central extensions of $\disc{G}$ by $[A\xrightarrow{\xmodmap}B]$.
\end{Theorem}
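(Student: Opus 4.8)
The plan is to read off each piece of the central extension from the corresponding simplicial degree of a cocycle representative, following the blueprint of \cite[Theorem 99]{Schommer-Pries10Central-Extensions-of-Smooth-2-Groups-and-a-Finite-Dimensional-String-2-Group}. By Corollary~\ref{cor:normalization} I may represent a class by a \emph{normalized} $3$-cocycle $(\gamma,\eta,F,\Phi,\Theta)$ on some covering $\cU$; normalization will render the unit morphism and the unit constraints canonically trivial and will cut the coherence data down to exactly the pieces listed above. The degree-$1$ part $(\gamma,\eta)$, living over $BG_1=G$, encodes a principal $[A\xrightarrow{\xmodmap}B]$-$2$-bundle $\wh{G}\to\disc{G}$: I present $\wh{G}$ by the Lie groupoid with object manifold $\coprod_{i\in I^{(1)}}U^{(1)}_i\times B$ and morphisms generated by the fibrewise $\xmodmap$-action of $A$ and by the transition isomorphisms built from the $B$-valued $\gamma_{ij}$, with composition twisted by the $A$-valued $\eta_{ijk}$. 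Here \eqref{eq:111-120} is precisely the statement that $\check{\delta}(\gamma)$ is $\xmodmap$-exact with witness $\eta$ (so that the transitions close up to the central $A$-factor), while \eqref{eq:120} is the identity $\check{\delta}(\eta)=0$ making the twisted composition associative; together they guarantee that this groupoid is well defined and that $\wh{G}\to\disc{G}$ is a principal $2$-bundle, trivialized over each $U^{(1)}_i$.

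Next I build the group structure. The multiplication $m\from\wh{G}\times\wh{G}\to\wh{G}$ is a generalized morphism covering $m_G$, and its data sit over $BG_2=G\times G$: the $B$-valued $F_i$ trivialize the comparison of $\wh{G}$ pulled back along the three face maps $d^2_0,d^2_1,d^2_2$, and the $A$-valued $\Phi_{ij}$ supply the coherence of this trivialization over double overlaps. Equation \eqref{eq:201-210-111}, namely $\dsimp(\gamma)+\check{\delta}(F)-\xmodmap_*(\Phi)=0$, is exactly the condition that $m$ is a well-defined morphism of stacks, and \eqref{eq:210-120} matches its $A$-coherence with $\eta$. The associator $a\from m{\circ}(m\times\id)\Rightarrow m{\circ}(\id\times m)$ lives over $BG_3=G^3$: since $G$ is associative both composites cover the same map $G^3\to G$, and the invertible $2$-morphism between them is given by the $A$-valued $\Theta_i$. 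Equations \eqref{eq:300-210} and \eqref{eq:300-201} say that $\Theta$ is compatible over overlaps and with $\xmodmap$, so that it defines a genuine $2$-morphism, and the top cocycle equation \eqref{eq:300}, $\dsimp(\Theta)=0$ over $BG_4=G^4$, is precisely the pentagon identity for $a$.

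It then remains to check that $(\wh{G},m,u,a,\ell,r)$ satisfies the axioms of Definition~\ref{defi:2gp} and that the extension is central. The invertibility of $(\pr_1,m)$ required by \eqref{eq:inverse} follows from the corresponding property of $m_G$ together with the principal-bundle structure of $\wh{G}$ in the manner of Remark~\ref{rk:inverse}; the unit $u$ and the constraints $\ell,r$ are the trivial ones thanks to normalization, and their coherence is again forced by normalization. Centrality is automatic: the transition and multiplication data take values in the \emph{abelian} groups $B$ and (centrally) $A$, so the induced $1$-morphism $\disc{G}\to\Aut([A\xrightarrow{\xmodmap}B])$ of the preceding Lemma is $2$-equivalent to the trivial one, and hence $[A\xrightarrow{\xmodmap}B]\to\wh{G}\to\disc{G}$ is a central extension.

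Finally, to see that the assignment descends to cohomology and is independent of the covering, I produce an equivalence of extensions from a coboundary. A normalized coboundary is recorded by $(\xi,\rho,\sigma)$ as in \eqref{110}--\eqref{101-200}: the $B$-valued $\xi$ defines a $1$-morphism $f\from\wh{G}\to\wh{G}'$ at the bundle level, $\rho$ supplies its coherence over overlaps via \eqref{101-110}, and $\sigma$ together with \eqref{110-200} and \eqref{101-200} makes $f$ a $1$-morphism of $2$-groups compatible with the multiplications; the required $2$-morphism $\lambda\from f{\circ}p\Rightarrow p'$ is read off from these same data. Passing to a common refinement of two coverings induces an equivalence by the same recipe, so the construction factors through the direct limit defining $\check{H}^3(BG_\bullet,A\xrightarrow{\xmodmap}B)$. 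I expect the main obstacle to be the bookkeeping around $m$ and $a$: because $1$-morphisms in $\sSt$ are generalized morphisms rather than honest functors, writing $m$ and its associator down explicitly and verifying the bicategorical coherence of Definition~\ref{defi:2gp} against the cocycle identities is delicate, and it is here that the normalization hypothesis and the careful combinatorial structure of the covering $\cU$ do the essential work.
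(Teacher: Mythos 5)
Your proposal follows essentially the same route as the paper: extract the principal $2$-bundle from the degree-$1$ data $(\gamma,\eta)$, build the multiplication on a refinement from $(F,\Phi)$ and the associator from $\Theta$, match each cocycle equation to the corresponding coherence condition (including the pentagon from $\dsimp(\Theta)=0$), use normalization to trivialize the unit constraints, and turn a coboundary $(\xi,\rho,\sigma)$ into an equivalence of extensions. The only substantive detail you leave implicit --- that $m$ and $a$ are realized as honest functors on the essentially equivalent restricted groupoids $\Gamma^{2}[\phi]$ and $\Gamma^{3}[\phi]$ rather than on the products themselves --- is exactly the bookkeeping you flag at the end, and it is handled in the paper just as you anticipate.
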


\begin{tabsection}
 The proof of this theorem will be finished in the end of Section
 \ref{sect:cohomologous_cocycles}. We first warm up with the following
 construction which also gives a geometric interpretation of the \v{C}ech
 cohomology of a constant simplicial manifold. Note that we will throughout
 assume that the occurring \v{C}ech cocycles are normalized, which is justified
 by Corollary \ref{cor:normalization}.
\end{tabsection}

\subsubsection{The principal bundle structure} \label{sec:principal-bundle}

\begin{tabsection}
 For a morphism $A \xrightarrow{\xmodmap} B$ of abelian Lie groups and a
 manifold $M$, viewed as a constant simplicial manifold, let $(\eta, \gamma)$
 be a 2-cocycle representing an element in the hypercohomology
 $\check{H}^2(M , A \xrightarrow{\xmodmap} B)$ (cf.\ Remark
 \ref{rem:explicit_form_of_differentiable_cocycles}), that is, there is an open
 cover $(U_i)_{i\in I}$ on $M$ with smooth maps
 \begin{equation*}
  \eta: U_{[2]}\to A, \quad \gamma:U_{[1]} \to B,
 \end{equation*}
 satisfying $D_3(\eta, \gamma)=0$, i.e.,
 $\xmodmap {\circ} \eta=\check{\delta} (\gamma)$ and
 $\check{\delta} (\eta) =0$. Here, to simplify the notation, we refer for
 $q\geq 0$ to $U_{[q]}$ as the disjoint union
 \begin{equation} \label{eq:lower-index}
  U_{[q]}:=\coprod_{i_{0},...,i_{q}\in I}U_{i_{0}}\cap \dots\cap U_{i_{q}}
 \end{equation}
 of $(q+1)$-fold intersections. The cocycle $(\gamma,\eta)$ defines a principal
 2-bundle $\cP_{(\gamma,\eta)}\to M$ (cf.\
 \cite{NikolausWaldorf11Four-Equivalent-Versions-of-Non-Abelian-Gerbes,
 Wockel09Principal-2-bundles-and-their-gauge-2-groups}). It is presented by a
 Lie groupoid denoted by
 $U_{[1]} \times_{(\eta, \gamma)} B \times A \rrarrow  U_{[0]} \times
 B $. \cccomment{there was a typo on lower index.}  The
 structure maps are given by
 \[ \bt(x_{ij}, b, a)=(x_i, b), \quad \bs(x_{ij}, b, a)=
  (x_j, b + \xmodmap(a) +\gamma_{ij}(x) ), \quad \forall (x_{ij}, b, a)
  \in U_{ij} \times B \times A,
 \]
 and
 \begin{equation}\label{eqn:groupoid_multiplication}
  (x_{ij}, b, a) \cdot ( x_{jk}, b', a') = (x_{ik}, b, a +
  a' \highlight{-} \eta_{ijk}(x)),
 \end{equation}
 \[ e(x_i, b) = (x_{ii}, b, 0),
 \]
 \[ (x_{ij}, b, a)^{-1} = (x_{ji}, b+\highlight{\xmodmap(a) } + \gamma_{ij}(x),
  -a).
 \]
 Then $D_3(\eta, \gamma)=0$ implies that \eqref{eqn:groupoid_multiplication} is
 compatible with the source and target maps and is associative.
 
 For the description of bundle morphisms in local coordinates it is often
 necessary to change the open covers describing a principal 2-bundle. Unlike in
 the case of principal bundles, different choice of $\eta, \gamma$ representing
 the same class in $H^2(M , A \xrightarrow{\xmodmap} B)$ do not give isomorphic
 principal 2-bundles, but only essentially equivalent groupoids with isomorphic
 principal bundle structure:
\end{tabsection}

\begin{Theorem}(\cite[Theorem
 2.22]{Wockel09Principal-2-bundles-and-their-gauge-2-groups}) Given a strict
 2-group $A\xrightarrow{\xmodmap} B$, the equivalence classes of
 principal $[A\to B]$-2-bundles $\cP \to M$ are classified by
 $\check{H}^2(M, A\xrightarrow{\xmodmap} B)$.
\end{Theorem}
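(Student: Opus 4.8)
The plan is to exhibit mutually inverse maps between $\check{H}^{2}(M,A\xrightarrow{\xmodmap}B)$ and the set of equivalence classes of principal $[A\xrightarrow{\xmodmap}B]$-2-bundles over $M$. One direction is essentially the construction $\cP_{(\gamma,\eta)}$ recalled just above: to a normalized $2$-cocycle $(\eta,\gamma)$, with $\eta\from U_{[2]}\to A$ and $\gamma\from U_{[1]}\to B$, one assigns the principal $2$-bundle presented by $U_{[2]}\times_{(\eta,\gamma)}B\times A\rrarrow U_{[1]}\times B$, and the identities $\cd(\eta)=0$ and $\xmodmap{\circ}\eta=\cd(\gamma)$ are exactly what make this groupoid associative and compatible with source and target. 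First I would check that this assignment descends to cohomology: a coboundary is given by maps $\xi\from U_{[0]}\to B$ and $\rho\from U_{[1]}\to A$ with $\eta-\eta'=-\cd(\rho)$ and $\gamma-\gamma'=-\cd(\xi)+\xmodmap{\circ}\rho$, and from such data one writes down an explicit smooth functor between the two presenting groupoids, together with a natural transformation, realizing $\cP_{(\gamma,\eta)}$ and $\cP_{(\gamma',\eta')}$ as equivalent $2$-bundles. This makes the forward map well defined.

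For the inverse map I would begin with an arbitrary principal $[A\xrightarrow{\xmodmap}B]$-2-bundle $\cP\to M$ and use local triviality to choose an open cover $(U_{i})_{i\in I}$ together with trivializations $t_{i}$ of $\cP|_{U_{i}}$, i.e.\ equivalences of $2$-bundles with $[A\xrightarrow{\xmodmap}B]\times\disc{U_{i}}$. On each double overlap the comparison $t_{i}^{-1}{\circ}t_{j}$ is an automorphism of the trivial bundle; since the structure $2$-group is abelian, such an automorphism is recorded by a smooth map $\gamma_{ij}\from U_{ij}\to B$. On triple overlaps the two induced $1$-morphisms agree only up to a $2$-morphism, and the groupoid $A\times B\rrarrow B$ encodes such a $2$-morphism by a smooth map $\eta_{ijk}\from U_{ijk}\to A$. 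Matching the source and target of this $2$-morphism forces $\cd(\gamma)=\xmodmap{\circ}\eta$, while the coherence of the $2$-morphisms over quadruple overlaps $U_{ijkl}$ forces $\cd(\eta)=0$. Hence $(\eta,\gamma)$ is a normalized $2$-cocycle, and by construction $\cP\cong\cP_{(\gamma,\eta)}$.

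It then remains to see that these two maps are mutually inverse. Passing a cocycle through both constructions returns its own class by inspecting the tautological trivializations of $\cP_{(\gamma,\eta)}$ over the $U_{i}$. Conversely, if $\cP_{(\gamma,\eta)}$ and $\cP_{(\gamma',\eta')}$ are equivalent, I would express a chosen equivalence in the trivializations above; the freedom in comparing the two families of local sections produces exactly a pair $(\xi,\rho)$ of the coboundary form from the first paragraph, so the two cocycles are cohomologous. Working with normalized cocycles throughout costs nothing by Corollary~\ref{cor:normalization}.

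The step I expect to be the main obstacle is the passage from bundles to cocycles. Local trivializations of a principal $2$-bundle are morphisms of smooth stacks rather than honest smooth functors, so extracting the transition data $\gamma_{ij}$ and $\eta_{ijk}$ requires a careful analysis of generalized morphisms of the presenting Lie groupoids and of the $2$-morphisms between them; the bookkeeping that converts the coherence of these $2$-morphisms on quadruple overlaps into the single identity $\cd(\eta)=0$ is the technical heart of the argument. This is precisely the content of \cite[Theorem 2.22]{Wockel09Principal-2-bundles-and-their-gauge-2-groups}, on which I would rely for the full verification.
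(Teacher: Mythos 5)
Your outline is correct and matches the paper exactly as far as the paper goes: the forward map is the construction $\cP_{(\gamma,\eta)}$ recalled in Section \ref{sec:principal-bundle}, and the paper gives no proof of its own for this theorem, importing it wholesale from \cite[Theorem 2.22]{Wockel09Principal-2-bundles-and-their-gauge-2-groups} --- precisely the part you also defer to that reference. The only point worth keeping in mind is the one the paper itself flags just before the theorem: cohomologous cocycles yield \emph{equivalent} (essentially equivalent presenting groupoids with isomorphic bundle structure), not isomorphic, 2-bundles, so the equivalence you produce from a coboundary $(\xi,\rho)$ may in general be a generalized morphism over a common refinement rather than an honest smooth functor; your phrasing already accommodates this.
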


\subsubsection{The \tgp structure}

\begin{tabsection}
 Now starting from a cocycle
 $\phi \in \check{Z}^3_U (\bB G_{\bullet}, A\xrightarrow{\xmodmap}B )$ of a
 given simplicial covering $(U^{(n)}_i)_{i\in I^{(n)}}$, we set off to
 construct the groupoid which provides the base space for $\wh{G}$ in the
 central extension $[A\xrightarrow{\xmodmap}B]\to \wh{G}\to \disc{G}$.
 
 First of all, we need to construct a $A \xrightarrow{\xmodmap} B$ principal
 2-bundle over $G$. There is a homomorphism
 \begin{equation}
  \check{Z}_{\cU}^3(\bB G_{\bullet}, A \xrightarrow{\xmodmap} B) \xrightarrow{\tau}  \check{Z}_{\cU}^2(G, A \xrightarrow{\xmodmap} B), \quad  \tau(\phi^{1,1,0}, \phi^{1,2,-1}, \phi^{2,1,-1}, \phi^{2,0,0}, \phi^{3,0,-1}) = ( \phi^{1,1,0}, \phi^{1,2,-1})
 \end{equation}
 inducing on the level of cohomology classes the edge homomorphism
 \begin{equation*}
  \check{H}^{3}(\bB G_{\bullet},A\xrightarrow{\xmodmap}B)\to \check{H}^{2}(G,A\xrightarrow{\xmodmap}B)
 \end{equation*}
 (note that $\check{H}^{2}_{\cU}(G,A\xrightarrow{\xmodmap}B)$ is by definition
 the cohomology of the double complex $\check{C}^{1,q,r}$ for $p=1$ constant
 from \eqref{eqn:triple-complex}).
 
 The 2-cocycle $(\phi^{1,1,0}, \phi^{1,2,-1})$ gives us the desired principal
 2-bundle via the construction in Section \ref{sec:principal-bundle}. We call
 this Lie groupoid $\Gamma[\phi]$ and it will serve as the underlying Lie
 groupoid of our \tgp. Notice that $(U^{(1)}_i)_{i\in I^{(1)}}$ is a covering
 of $G$, thus by the above discussion we have
 \begin{equation*}
 \Gamma[\phi]_0 = U^{(1)}_{[0]} \times B, \quad
  \Gamma[\phi]_1=U^{(1)}_{[1]} \times B \times A.
 \end{equation*}
 Here the lower indices denote disjoint union of intersections (see
 \eqref{eq:lower-index}). For this section we will switch back to the notation
 for cocycles that we used in Section
 \ref{sect:differentiable_hypercohomology}. Then in this notation the groupoid
 multiplication on $\Gamma[\phi]$ is given by
 \[(u_0, u_1, b, a) \cdot (u_1, u_2, b', a') = 
  (u_0, u_2, b, a+a'\highlight{-} \phi^{1,2,-1} (u_0, u_1, u_2) ),
 \]
 where we have also identified the intersection $U_{i_{0}}\cap U_{i_{1}}$ with
 the pull-back $U_{i_{0}}\times_{G} U_{i_{1}}$.
 
 We recall that $\bB G$, the nerve of $(G\rrarrow *)$, is a simplicial manifold as
 established in Example \ref{ex:nerve}. We use
 $d_2, d_0 : \bB G_2=G\times G \to \bB G_1 =G$ to pullback the 2-cocycle
 $(\phi^{(1,1,0)}, \phi^{(1,2,-1)})$ on $G$. Then we obtain two such 2-cocycles
 $d^*_2(\phi^{(1,1,0)}, \phi^{(1,2,-1)})$ and
 $d^*_0(\phi^{(1,1,0)}, \phi^{(1,2,-1)})$ on $G\times G$. The above construction gives us a $(A\xrightarrow{\xmodmap}B)^{\times 2}$ principal
 2-bundle on $G\times G $, whose underlining Lie groupoid, denoted by
 $\Gamma^2[\phi]$, is given by
 \[
  U_{[1]}^{(2)} \times B^{\times 2} \times A^{\times 2} \rrarrow
  {U}_{[0]}^{(2)}\times B^{\times 2}.
 \]
 Moreover, there is a Lie groupoid morphism
 $(d_0, d_2): \Gamma^2[\phi] \to \Gamma[\phi]\times \Gamma[\phi]$ defined on
 $ U_{[1]}^{(2)} \times B^{\times 2} \times A^{\times 2} \to U_{[1]}^{(1)} \times B \times A \times U_{[1]}^{(1)} \times B \times A $
 by
 \begin{equation}\label{eq:Gamma2}
  (v_0, v_1, b_0 , b_1, a_0, a_1) \mapsto (d_0(v_0), d_0(v_1), b_0, a_0) \times (d_2(v_0) , d_2(v_1), b_1 , a_1  ).
 \end{equation}
 We will throughout define a Lie groupoid morphism on the space of arrows since
 this determines the morphism on the objects uniquely. Notice that
 $  d_{0}^{*}\Gamma[\phi]\times d_{2}^{*}\Gamma[\phi] $ is given by
 \begin{equation*}
  {U'}_{[1]}^{(2)}\times A^{\times 2}\times B^{\times 2}\rrarrow
  {U'}_{[0]}^{(2)}\times B^{\times 2},
 \end{equation*}
 where
 $({U'}_{i_{0},i_{1}}^{(2)}:=U_{i_{0}}\times U_{i_{1}})_{(i_{0},i_{1})\in I^{(1)}\times I^{(1)}}$
 is the product open covering of $G\times G$. Note that this is exactly the
 same as the coarsest common refinement of the pull-back covers
 $d_{0}^{*}(U_{i}^{(1)})_{i\in I^{(1)}}$ and
 $d_{2}^{*}(U_{i}^{(1)})_{i\in I^{(1)}}$, thus it contains $U^{(2)}$ as a
 subcovering. By \cite[Corollary
 2.23]{Wockel09Principal-2-bundles-and-their-gauge-2-groups}, the groupoid
 morphism \eqref{eq:Gamma2} as the composition of the inclusion
 $\Gamma^2[\phi]\to d_{0}^{*}\Gamma[\phi]\times d_{2}^{*}\Gamma[\phi]$ and the
 isomorphism
 $  d_{0}^{*}\Gamma[\phi]\times d_{2}^{*}\Gamma[\phi] \cong  \Gamma[\phi]^{\times 2}$
 is a weak equivalence. Thus $\Gamma^2[\phi] $ and $ \Gamma[\phi]^{ \times 2}$
 are Morita equivalent. Similarly, we use
 $d_0d_0, d_2d_0, d_2d_2: \bB G_3 \to \bB G_1$ to pullback the 2-cocycle
 $(\phi^{(1,1,0)}, \phi^{(1,2,-1)})$. This gives us a Lie groupoid
 $\Gamma^3[\phi]$ which is a $(A\xrightarrow{\xmodmap}B)^{\times 3}$ principal
 2-bundle on $G\times G\times G $. Moreover the map $(d_2 d_2, d_2 d_0, d_0 d_0)$
 gives arise to a Morita equivalence between $\Gamma^3[\phi]$ the product of
 three copies of $\Gamma[\phi]$.
 
 \emptycomment{ The $(A\xrightarrow{\xmodmap}B)^{2}$ principal 2-bundle
 $\Gamma[\phi]\times \Gamma[\phi]$ is given by
 \begin{equation*}
  {U'}_{[1]}^{(2)}\times A^{2}\times B^{2}\rrarrow {U'}_{[0]}^{(2)}\times B^{2},
 \end{equation*}
 where
 $({U'}_{i_{0},i_{1}}^{(2)}:=U_{i_{0}}\times U_{i_{1}})_{(i_{0},i_{1})\in I^{(1)}\times I^{(1)}}$
 is the product open covering of $G\times G$. Note that this is exactly the
 same as the coarsest common refinement of the pull-back covers
 $d_{0}^{*}(U_{i}^{(1)})_{i\in I^{(1)}}$ and
 $d_{2}^{*}(U_{i}^{(1)})_{i\in I^{(1)}}$, for $d_{0},d_{2}\from G\times G\to G$
 as in Example \ref{ex:nerve}. Thus $\Gamma[\phi]\times \Gamma[\phi]$ is the
 same as $d_{0}^{*}\Gamma[\phi]\times d_{2}^{*}\Gamma[\phi]$.
 
 Since $(U^{(2)}_{i})_{i\in I^{(2)}}$ is a refinement of the product covering
 we may restrict $\Gamma[\phi]\times \Gamma[\phi]$ to this cover and obtain a
 principal 2-bundle $\Gamma^{2}[\phi]$. Then the canonical inclusion
 $\Gamma^{2}[\phi]\to \Gamma[\phi]\times \Gamma[\phi]$ is a weak equivalence by
 \cite[Corollary 2.23]{Wockel09Principal-2-bundles-and-their-gauge-2-groups}.
 In the same manner, the product covering
 \begin{equation*}
  \big({U'}_{i_{0},i_{1},i_{2}}^{(3)}:=U_{i_{0}}\times U_{i_{1}}\times
  U_{i_{2}}\big)_{(i_{0},i_{1},i_{2})\in I^{(1)}\times I^{(1)}\times I^{(1)}}
 \end{equation*}
 of $G^{3}$ gets refined by $(U_{i}^{(3)})_{i\in I^{(3)}}$ and we obtain a
 principal $(A\xrightarrow{\xmodmap}B)^{3}$ 2-bundle $\Gamma^{3}[\phi]$ over
 $G^{3}$ by restricting $\Gamma[\phi]^{3}$ to $(U_{i}^{(3)})_{i\in I^{(3)}}$
 and a weak equivalence $\Gamma^{3}[\phi]\to \Gamma[\phi]^{3}$. }

 We now define the multiplication morphism $m: \Gamma^2[\phi] \to \Gamma[\phi]$
 by
 \begin{equation}\label{eqn:multiplication-morphism}
  (v_0, v_1, b_0 , b_1, a_0, a_1) \mapsto (d_1(v_0), d_1(v_1), b_0 +b_1 +
  \phi^{2,0,0} (v_0), a_0+a_1 + \phi^{2,1,-1}(v_0, v_1) )
 \end{equation}
 Since the morphism $\Gamma^{2}[\phi]\to \Gamma[\phi]\times \Gamma[\phi] $ is a
 weak equivalence, the span
 \begin{equation*}
  \Gamma[\phi]\times \Gamma[\phi]\leftarrow \Gamma^{2}[\phi]\xrightarrow{m} \Gamma[\phi]
 \end{equation*}
 defines a generalized morphism
 $\tilde{m}\from \Gamma[\phi]\times \Gamma[\phi]\to \Gamma[\phi]$. That
 \eqref{eqn:multiplication-morphism} is indeed a Lie groupoid morphism follows
 from \eqref{eq:201-210-111} and \eqref{eq:210-120} as follows. Keep in mind
 that in $\Gamma^{2}[\phi]$ we have
 \begin{equation*}
  \begin{split}
  &\bt(v_0, v_1, b_0, b_1, a_0, a_1) = (v_0, b_0, b_1), \\
  &\bs(v_0, v_1, b_0, b_1, a_0, a_1) = (v_1, b_0 +\xmodmap(a_0)+\phi^{1,1,0}(\highlight{d_0}(v_0), \highlight{d_0}(v_1)), b_1 + \mu(a_1)+\phi^{1,1,0}(\highlight{d_2}(v_0), \highlight{d_2}(v_1))), \\
  &(v_0, v_1, b_0, b_1, a_0, a_1) \cdot (v_1, v_2, b'_0, b'_1, a'_0,
  a'_1) \\ &= \big(v_0, v_2, b_0, b_1,   a_0+a'_0\highlight{-}\phi^{1,2,-1} (d_2(v_0), d_2(v_1), d_2(v_2)), a_1+ a'_1\highlight{-}  \phi^{1,2,-1}(d_0(v_0), d_0(v_1),\highlight{d_0(v_{2})})\big).
  \end{split}
 \end{equation*}
 Then $m {\circ}\bt=\bt {\circ} m$ holds by definition and we have
 \begin{equation*}
  \begin{split}
  & m \circ \bs(v_0, v_1, b_0, b_1, a_0, a_1) \\  = & \big(d_1(v_1), b_0+\mu(a_0) + \phi^{1,1,0}(d_2(v_0), d_2(v_1))+b_1 +\mu(a_1)+\phi^{1,1,0}(d_0(v_0), d_0(v_1)) + \phi^{2,0,0}(v_1)\big)
  \end{split}
 \end{equation*}
 \[\bs\circ m(v_0, v_1, b_0, b_1, a_0, a_1) =\big(d_1(v_1), b_0+b_1+\phi^{2,0,0}(v_0)+\mu(a_0+a_1)+d_{0,0,1}(\phi^{2,1,-1}(v_0, v_1))
  + \phi^{1,1,0}(d_1(v_0), d_1(v_1))\big).
 \]
 Thus $m \circ \bs=\bs\circ m$ is equivalent to \eqref{eq:201-210-111}.
 Similarly, we have on one hand
 \begin{equation*}
  \begin{split}
  &m((v_0, v_1, b_0, b_1, a_0, a_1) \cdot (v_1, v_2, b'_0, b'_1, a'_0, a'_1))\\
  =&m(v_0, v_2, b_0, b_1, a_0+a'_0\highlight{-}\phi^{1,2,-1}(d_2(v_0), d_2(v_1), d_2(v_2)), a_1+a'_1\highlight{-}\phi^{1,2,-1}(d_0(v_0), d_0(v_1), d_0(v_2))) \\
  =&(d_1(v_0), d_1(v_2), b_0+b_1+\phi^{2,0,0}(v_0), \\
  & a_0+a'_0\highlight{-}\phi^{1,2,-1}(d_2(v_0), d_2(v_1), d_2(v_2)) + a_1 +a'_1\highlight{-}\phi^{1,2,-1}(d_0(v_0), d_0(v_1), d_0(v_2)) +\phi^{2,1,-1}(v_0, v_2) ),
  \end{split}
 \end{equation*}
 and on the other hand
 \begin{equation*}
  \begin{split}
  &m(v_0, v_1, b_0, b_1, a_0, a_1) \cdot m(v_1, v_2, b'_0, b'_1, a'_0, a'_1) \\
  =&(d_1(v_0), d_1(v_1), b_0+b_1+\phi^{2,0,0}(v_0), \\
  & a_0+a_1+\phi^{2,1,-1}(v_0, v_1))\cdot (d_1(v_1), d_1(v_2), b'_0+b'_1+\phi^{2,0,0}(v_1), a'_0+a'_1+\phi^{2,1,-1}(v_1, v_2)) \\
  =&(d_1(v_0), d_1(v_2), b_0+b_1 +\phi^{2,0,0}(v_0), \\
  & a_0+a_1+\phi^{2,1,-1}(v_0, v_1)+a'_0+a'_1+\phi^{2,1,-1}(v_1, v_2) \highlight{-} \phi^{1,2,-1}(d_1(v_0), d_1(v_1), d_1(v_2))).
  \end{split}
 \end{equation*}
 Thus $m$ and $\cdot$ commute iff \eqref{eq:210-120} holds.
 
 Like the multiplication morphism, the associator will not be defined on
 $\Gamma[\phi]^{\times 3}$, but on the Morita equivalent Lie groupoid
 $\Gamma^{3}[\phi]$. There are two Lie groupoid morphisms
 $f_1 ,f_2: \Gamma^3[\phi] \to \Gamma[\phi]$ defined on
 $U^{(3)}_{[1]}  \times B^{\times 3} \times A^{\times 3} \to (U^{(1)}_{[1]} \times B  \times A)^{\times 3}$
 by
 \begin{equation} \label{eq:f1f2}
  \begin{split}
  f_1: (w_0, w_1, b_0, b_1,b_2,a_0, a_1, a_2) \mapsto &(d_1 (\highlight{d_2}(w_0)), d_1 (\highlight{d_2}(w_1)), b_0+b_1+b_2 + {{d_2}}^* \phi^{2,0,0}(w_0) + {{d_0}}^* \phi^{2,0,0} (w_0),  \\ & a_0 +a_1 +a_2 + {d_2}^* \phi^{2,1,-1} (w_0, w_1) + {d_0}^* \phi^{2,1,-1} (w_0, w_1) ) \\
  f_2:(w_0, w_1, b_0, b_1,b_2,a_0, a_1, a_2) \mapsto &(d_1 ({d_1}(w_0)), d_1 ({d_1}(w_1)), b_0+b_1+b_2 + {{d_1}}^* \phi^{2,0,0}(w_0) + {{d_3}}^* \phi^{2,0,0} (w_0),  \\ & a_0 +a_1 +a_2 + {d_1}^* \phi^{2,1,-1} (w_0, w_1) + {d_3}^* \phi^{2,1,-1} (w_0, w_1) ).
  \end{split}
 \end{equation}
 \emptycomment{ where $d'_{i}\from G^{3}\to G^{2}$ denotes for $i=0,...,3$ the
 morphisms from Example \ref{ex:nerve}.} \cccomment{as the relation of $U'$ and
 $U$ is clarified before, we avoid using the symbol $d'$.}As before, $f_1$ and
 $f_2$ are Lie groupoid morphisms by \eqref{eq:201-210-111} and
 \eqref{eq:210-120} and since $d_{1}{\circ} d_{1}=d_{1}{\circ} d_{2}$. Note
 that $f_{1}$ is the same as $m {\circ} (\id \times m)$, restricted from
 $(\id\times m)^{*}\Gamma^{2}[\phi]$ to $\Gamma^{3}[\phi]$ and that $f_{2}$ is
 the same as $m {\circ} (m\times \id)$, restricted from
 $(m\times \id)^{*}\Gamma^{2}[\phi]$ to $\Gamma^{3}[\phi]$.
 
 There is a smooth natural transformation
 $\alpha\from f_1 \highlight{\Leftarrow} f_2$, which is a map
 $\alpha: \Gamma^3[\phi]_0 \to \Gamma[\phi]_1 = U_{[1]}^{(2)} \times B \times A$
 defined by
 \[
  (w_0, b_0, b_1, b_2) \mapsto (d_1
  d_2(w_0), d_1 d_2(w_0), b_0 +b_1+b_2+{d_2}^* \phi^{2,0,0}(w_0) + {d_0}^*
  \phi^{2,0,0} (w_0), \phi^{3,0,-1} (w_0) )
 \]
 \cccomment{here there is a sign problem. $-\phi^{3, 0, 1}\to \phi^{3,0,1}$.}
 To verify this, we only need to show that
 \begin{equation} \label{eq:alpha}
  f_1(\gamma) \cdot \alpha(\bs(\gamma))=\alpha(\bt(\gamma)) \cdot
  f_2(\gamma),
 \end{equation}
 for $\gamma\in \Gamma^3[\phi]_1$ (source-target matching is equivalent to
 \eqref{eq:201-210-111} and \eqref{eq:300-201} and $d_1d_1 = d_1 d_2$). Take
 $\gamma=(w_0, w_1, b_0, b_1, b_2, a_0, a_1, a_2)$, then
 \begin{equation*}
  \begin{split}
  \bs(\gamma)= &(w_1, 
  b_0+\mu(a_0)+\phi^{1,1,0}(d_0(d_0(w_0)),d_0(d_0(w_1))),
  b_1+\mu(a_1)+\phi^{1,1,0}(d_2(d_0(w_0)),d_2(d_0(w_1)),\\& 
  b_2+\mu(a_2)+\phi^{1,1,0}(d_2(d_2(w_0)),d_2(d_2(w_1))) 
  ),
  \end{split}
 \end{equation*}
 and $\bt(\gamma)=(w_0, b_0, b_1, b_2)$. Then
 \begin{equation*}
  \begin{split}
  f_1(\gamma) \cdot \alpha(\bs(\gamma))=&(d_1 \highlight{d_{2}}(w_0), d_1 \highlight{d_{2}}(w_1),
  b_0+b_1+b_2+{d_2}^* \phi^{2,0,0}(w_0)+{d_0}^*\phi^{2,0,0}(w_0),  \\ & a_0+
  a_1+ a_2 + {d_2}^* \phi^{2,1,-1}(w_0, w_1) + {d_0}^*\phi^{2,1,-1}(w_0, w_1
  ) + \phi^{3,0,-1}(\highlight{w_{1}})),
  \end{split}
 \end{equation*}
 and
 \begin{equation*}
  \begin{split}
  \alpha(\bt(\gamma))\cdot f_2(\gamma)= &(d_1(\highlight{d_2}(w_0)), d_1 (d_1(w_1)),
  b_0+b_1+b_2+{d_2}^*\phi^{2,0,0}(w_0)+{d_0}^*\phi^{2,0,0}(w_0), 
  \phi^{3,0,-1}(w_0) \\ + & a_0+a_1+a_2+{d_1}^*\phi^{2,1,-1}(w_0,
  w_1)+{d_3}^*\phi^{2,1,-1}(w_0, w_1)) \end{split}
 \end{equation*}
 Thus \eqref{eq:alpha} is equivalent to \eqref{eq:300-210}. In the end, the
 same argument as before shows that
 \begin{eqnarray}
  d_{1,0,0} \phi^{3,0,-1} =0
 \end{eqnarray}
 is equivalent to the coherence condition that the associator $\alpha$ has to
 satisfy (see also \cite[Theorem
 99]{Schommer-Pries10Central-Extensions-of-Smooth-2-Groups-and-a-Finite-Dimensional-String-2-Group}).
 
 For the unit we choose some $i\in I^{(1)}$ such that the identity $e$ of $G$
 is in $U_{i}^{(1)}$. Then the unit of $\wh{G}$ is given by the groupoid
 morphism, uniquely determined by
 \[ u: * \to \Gamma[\phi],\quad * \mapsto (e, 0) \in U_i\times B.
 \]
 Then the composition $m\circ (\id \times u)$ is defined as a smooth functor on
 the Lie groupoid $s_{1}^{*}\Gamma^{2}[\phi]$, where
 \begin{equation*}
  s_{1}\from G\times \nobreak*\cong G\to G\times G
 \end{equation*}
 is the embedding into the first factor. Since
 $(s_{1}^{-1}(U_{i}^{(2)}))_{i\in I^{(2)}}$ is a refinement of
 $(d_{2}(U_{i}^{(2)}))_{i\in I^{(2)}}$ and since
 $(d_{2}(U_{i}^{(2)}))_{i\in I^{(2)}}$ is a refinement of
 $(U_{i}^{(1)})_{i\in I^{(1)}}$ by assumption, the natural inclusion
 $s_{1}^{*}\Gamma^{2}[\phi]\hookrightarrow\Gamma[\phi]$ is a weak equivalence.
 It thus suffices to check that
 \begin{equation*}
  s_{1}^{*}\Gamma^{2}[\phi] \to \Gamma^{2}[\phi]\xrightarrow{m}\Gamma [\phi]
 \end{equation*}
 is also equal to the inclusion
 $s_{1}^{*}\Gamma^{2}[\phi]\hookrightarrow \Gamma[\phi]$. An arbitrary morphism
 in $s_{1}^{*}\Gamma^{2}[\phi]$ now is of the form
 $(v_{0},v_{1},b_{0},0,a_{0},0)$, where $v_{0}=s_{1}(u_{0})$ and
 $v_{1}=s_{1}(u_{1})$. Thus the simplicial identities and our normalization
 conditions imply that
 \begin{equation*}
  m(v_{0},v_{1},b_{0},0,a_{0},0)=
  (d_{1}(v_{0}),d_{1}(v_{1}),b_{0}+\phi^{2,0,1}(v_{0}),a_{0}+\phi^{2,1,0}(v_{0},v_{1}))=
  (v_{0},v_{1},b_{0},a_{0}).
 \end{equation*}
 This shows that the 2-morphism $m {\circ} (\id\times u)\Rightarrow \id$ can
 actually be taken to be represented by the identity natural transformation on
 the inclusion $s_{1}^{*}\Gamma^{2}[\phi]\hookrightarrow\Gamma[\phi]$. The same
 argument shows that $u$ is also a strict right unit.
 A categorification of the argument in Remark \ref{rk:inverse} now shows that
 \eqref{eq:inverse} holds for $\Gamma[\phi]$. Thus $\Gamma[\phi]$, together
 with $m,u$ and $\alpha$ is indeed a \tgp, denoted $\wh{G}_{\phi}$.
 
 Having fixed the choice of $i\in I^{(1)}$ with $e\in U_{i}$, there is a
 canonical morphism of Lie groupoids
 \mbox{$(A\times B\rrarrow B)\to \Gamma[\phi]$}, given by
 \begin{equation*}
  (a,b)\mapsto (e_{i},e_{i},a,b)\in U_{i}\cap U_{i}\times B\times A.
 \end{equation*}
 (where we again have identified $U_{i}\times_{G}U_{i}$ with
 $U_{i}\cap U_{i}\se G$) one easily checks that this defines a morphism of the
 associated \tgps $[A\xrightarrow{\xmodmap}B]\xrightarrow{p} \wh{G}_{\phi}$
 with trivial $F_{2}$ and $F_{0}$. Likewise, the morphism
 $\Gamma[\phi]\to \disc{G}$, given by
 \begin{equation*}
  (u_{0},u_{1},b,a)\mapsto u_{0}
 \end{equation*}
 gives a homomorphism of \tgps $\wh{G}_{\phi}\xrightarrow{q} \disc{G}$. From
 this it is obvious that the composition
 $[A\xrightarrow{\xmodmap}B]\to\wh{G}_{\phi}\to\disc{G}$ is the trivial
 morphism $[A\xrightarrow{\xmodmap}B]\to{*}\to\disc{G}$. We thus have an
 extension
 \begin{equation*}
  [A\xrightarrow{\xmodmap}B]\to \wh{G}_{\phi}\to \disc{G}.
 \end{equation*}
 That this is in fact a central extension follows from the fact that we
 considered the action of $\disc{G}$ on $[A\xrightarrow{\xmodmap}B]$ as trivial
 (cf.\ \cite[Lemma
 84]{Schommer-Pries10Central-Extensions-of-Smooth-2-Groups-and-a-Finite-Dimensional-String-2-Group}).
 We thus arrive at the following
\end{tabsection}
 
\begin{Proposition}\label{prop:central-extensions-from-differentiable-cocycles}
 For $\phi\in \check{Z}^3_U (\bB G_{\bullet}, A\xrightarrow{\xmodmap}B )$, the
 \tgp $\wh{G}_{\phi}$, together with the morphisms
 $[A\xrightarrow{\xmodmap}B]\xrightarrow{p} \wh{G}_{\phi}$, 
 $\wh{G}_{\phi}\xrightarrow{q} \disc{G}$  is a central extension of \tgps.
\end{Proposition}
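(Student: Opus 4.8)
The plan is to collect the data and identities produced in the construction preceding the statement and to verify, in order, that (i) $\wh{G}_{\phi}$ is a \tgp, (ii) the given 1-morphisms assemble into an extension, and (iii) this extension is central. For (i) I would first record that $\Gamma[\phi]$ is, by its very definition, the principal $[A\xrightarrow{\xmodmap}B]$-2-bundle over $\disc{G}$ associated by Section~\ref{sec:principal-bundle} to the 2-cocycle $\tau(\phi)=(\phi^{1,1,1},\phi^{1,2,0})$. The multiplication $m$, the associator $\alpha$ and the unit $u$ have already been produced; what remains is to package them as a group object in $\sSt$ and confirm the coherence axioms. Here $m$ is a Lie groupoid morphism precisely by \eqref{eq:201-210-111} and \eqref{eq:210-120}; the naturality of $\alpha$ is \eqref{eq:300-210}, with source--target matching supplied by \eqref{eq:300-201}; the pentagon coherence for $\alpha$ reduces to the remaining cocycle equation \eqref{eq:300}; and, by the normalization conventions, the unit constraints $\ell$ and $r$ may be taken to be identity 2-morphisms.

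The one axiom not checked directly in the construction is the invertibility condition \eqref{eq:inverse}, that $(\pr_{1},m)$ be an equivalence in $\sSt$, and I expect this to be the main obstacle, since it is the only point where one must argue genuinely in the bicategory rather than transcribe a Lie-group computation. The plan is to categorify Remark~\ref{rk:inverse}: form the pullback of the base diffeomorphism $\pr\times m_{G}$ of $G\times G$ along the projection $\wh{G}_{\phi}\times\wh{G}_{\phi}\to\disc{G}\times\disc{G}$, factor $\pr\times m$ through the canonical maps to and from this pullback, and conclude from functoriality of the pullback that the middle map (the analogue of $p$ in Remark~\ref{rk:inverse}) is an equivalence, while the remaining map (the analogue of $\beta$) is invertible because $\pr\times m_{A}$ is an isomorphism of abelian \tgps. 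The delicate point will be to transport the coherence 2-morphisms through this diagram, not merely the underlying objects and 1-morphisms; granting that bookkeeping, the argument is formally parallel to the strict case and establishes \eqref{eq:inverse}, so that $\wh{G}_{\phi}=(\Gamma[\phi],m,u,\alpha)$ is a \tgp.

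For (ii) and (iii) I would read off from the defining formulas that $p$ and $q$ are homomorphisms of \tgps (with trivial $F_{2}$ and $F_{0}$ for $p$) and that their composite is the canonical 1-morphism $[A\xrightarrow{\xmodmap}B]\to *\to\disc{G}$. Since $\Gamma[\phi]\to\disc{G}$ is a principal $[A\xrightarrow{\xmodmap}B]$-2-bundle by its construction in Section~\ref{sec:principal-bundle}, the sequence $[A\xrightarrow{\xmodmap}B]\xrightarrow{p}\wh{G}_{\phi}\xrightarrow{q}\disc{G}$ is an extension in the required sense. Centrality is then immediate: the entire construction used the trivial action of $\disc{G}$ on $[A\xrightarrow{\xmodmap}B]$, so the classifying 1-morphism $\disc{G}\to\Aut([A\xrightarrow{\xmodmap}B])$ of the preceding Lemma is 2-equivalent to the trivial one, which is exactly the definition of centrality (cf.\ \cite[Lemma 84]{Schommer-Pries10Central-Extensions-of-Smooth-2-Groups-and-a-Finite-Dimensional-String-2-Group}).
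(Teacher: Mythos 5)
Your proposal is correct and follows essentially the same route as the paper: the cocycle identities \eqref{eq:201-210-111}, \eqref{eq:210-120}, \eqref{eq:300-210}, \eqref{eq:300-201} and \eqref{eq:300} are matched to exactly the same structural verifications (multiplication functor, naturality and coherence of the associator), the unit constraints are trivialized by normalization, invertibility is obtained by categorifying Remark~\ref{rk:inverse}, and centrality follows from the trivial $\disc{G}$-action via \cite[Lemma 84]{Schommer-Pries10Central-Extensions-of-Smooth-2-Groups-and-a-Finite-Dimensional-String-2-Group}. The paper likewise leaves the categorified pullback argument for \eqref{eq:inverse} as a one-line remark, so your identification of that step as the only genuinely bicategorical point is consistent with the paper's treatment.
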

 
\subsection{Cohomologous cocycles}
\label{sect:cohomologous_cocycles}

\begin{tabsection}
 Now suppose that we have two cohomologous 3-cocycles $\phi$ and $\tphi$
 satisfying Equations \eqref{101-110}, \eqref{101-200}, \eqref{110},
 \eqref{110-200} and \eqref{200} with a 2-cochain $\psi$. Since different
 covers lead to weak equivalences we may as well assume that $\phi$ and
 $\tphi$ live on the same cover $U$. In this case the 1-morphism
 $f: \wh{G}_{\phi}\to\wh{G}_{\tphi}$ is given by the smooth functor
 \[
  F\from \Gamma[\phi]\to \Gamma[\tphi],\quad  (u_0, u_1, b, a) \mapsto (u_0, u_1,
  b+\psi^{1,0,0}(u_0), a+\psi^{1,1,-1}(u_0, u_1) ),
 \]
 the smooth natural transformation
 $ F_{2}\from m_{G_{\phi}} {\circ}(F\times F)\Rightarrow F {\circ} m_{G_{\tphi}}$,
 \begin{equation*}
  (v_{0},b_{0},b_{1})\mapsto (d_{1}(v_{0}),d_{1}(v_{0}),b_{0}+b_{1}+\phi^{2,0,0}(v_{0})+\psi^{1,0,0}(d_{1}(v_{0})),\psi^{2,0,-1}(v_{0}))
 \end{equation*}
 and the smooth natural transformation $F_{0}\from F {\circ} u\Rightarrow u'$,
 $*\mapsto (e_{i},e_{\tilde{i}},0,0)$.
 
 Indeed, \eqref{101-110} implies that $F$ preserves source and target, and
 \eqref{110} implies that $F$ preserves the groupoid multiplication. Thus $F$
 is a groupoid morphism. Moreover, \eqref{101-200} and \eqref{110-200} imply
 source-target matching for $F_{2}$. That $F_{2}$ satisfies the coherence
 condition is then implied by \eqref{200}. The equivalence between the central
 extensions is then completed by the smooth natural transformation
 \begin{equation*}
  \lambda\from F {\circ} p \Rightarrow p',\quad (a,b)\mapsto (e_{i},e_{\tilde{i}},b,a).
 \end{equation*}
\end{tabsection}

\section{Geometric cocycle constructions} \label{sec:geo-cocycle}

\begin{tabsection}
 This section describes a geometric way for constructing differentiable
 cocycles on $G$ from Lie algebra cocycles on $\fg$ and is the heart of the
 paper.
\end{tabsection}

\subsection{Locally smooth cocycles} \label{sect:locally_smooth_cocycles}

\begin{tabsection}
 The continuous second Lie algebra cohomology $H_{c}^{2}(\fg,\fz)$ classifies
 topologically split central extensions
 \begin{equation*}
  \fz\to \wh{\fg}\to\fg
 \end{equation*}
 of $\fg$ by $\fz$. Given a 2-cocycle $\omega$ representing a class
 $[\omega]\in H^2(\g, \z)$, we first review the construction from
 \cite{Wockel08Categorified-central-extensions-etale-Lie-2-groups-and-Lies-Third-Theorem-for-locally-exponential-Lie-algebras}
 of a locally smooth cocycle $(\locsm{F},\locsm{\Theta})$ integrating $\omega$.	
\end{tabsection} 

\begin{definition}
 Let $G$ be an arbitrary Lie group and $A$ an abelian Lie group. Then we set
 \begin{equation*}
  C^{n}_{\op{loc}}(G,A):=\{f\from G^{n}\to A\mid f \text{ is smooth on some identity neighborhood}\}.
 \end{equation*}
 On $C^{n}_{\op{loc}}(G,A)$ we have the usual group differential
 \begin{multline*}
  \dd\from C^{n}_{\op{loc}}(G,A)\to
  C^{n+1}_{\op{loc}}(G,A),\quad \dd f(g_{0},...g_{n})=\\ g_{0}.f(g_{1},...,g_{n})
  -\sum _{i=0}^{n-1}(-1)^{i}f(g_{0},...,g_{i}g_{i+1},...,g_{n})-
  (-1)^{n}f(g_{0},...,g_{n-1}).
 \end{multline*}
 The corresponding cohomology groups will be denoted by $H^{n}_{\op{loc}}(G,A)$.
 If $A\xrightarrow{\xmodmap} B$ is a morphism of abelian Lie groups, then an
 \emph{$(A\xrightarrow{\xmodmap}B)$-valued locally smooth group cocycle} on $G$
 (shortly called locally smooth cocycle if the setting is understood) consists
 of two maps $F\in C_{\op{loc}}^{2}(G,B)$ and $\Theta \in C^{3}_{\op{loc}}(G,A)$ such that
 \begin{align*}
  \dd F =\xmodmap \circ\Theta\quad\text{ and }\quad
  \dd \Theta=0.
 \end{align*}
\end{definition}

\begin{tabsection}
 Now $\locsm{\Theta}$ is constructed as follows. For each
 $g,h\in G$, let $\aone{g}\from \Delta^{(1)}\to G$ be smooth with
 $\aone{g}(0)=e$ and $\aone{g}(1)=g$ and $\atwo{g,h}\from \Delta^{(2)}\to G$ be
 smooth with
 \begin{equation}\label{eqn:definition_of_alpha_and_beta_tilde}
  g.\aone{h}-\aone{gh}+\aone{g}=\psing\atwo{g,h},
 \end{equation}
 where $\psing$ denotes the differential of singular chains.
 These maps exist since we assume $G$ to be 1-connected. In addition, we may
 choose these maps so that
 \begin{equation*}
  G\ni g\mapsto \aone{g}\in C_{*}^{\infty}(\Delta^{(1)},G) \quad\text{ and }\quad
  G^{2}\ni (g,h)\mapsto \atwo{g,h}\in C^{\infty}_{*}(\Delta^{(2)},G)
 \end{equation*}
 \emptycomment{\cccomment{ $ C^{\infty}_{*}(\Delta^{(2)},G)$ are locally smooth
 cocycles? then maybe we can sort all notations somewhere in notation, or we
 point it out here locally.}} are smooth on an identity neighborhood $U$. In
 fact, if $\varphi\from P\to \wt{P}\se\fg$ is a chart for $G$ with
 $\varphi(e)=0$ and $\wt{P}$ convex and if we set $\wt{g}:=\varphi(g)$ and
 $\wt{g}\op{\star}\wt{h}:=\wt{gh}$, \emptycomment{\cccomment{what is $*$?
 notice that later it is used for concatenation. is here the same meaning?}}
 then we set
 \begin{align}
  \aone{g}(t)&:=\varphi^{-1}(t\cdot \wt{g})\label{eqn:definition_of_alpha}\\
  \atwo{g,h}(s,t)&:=\varphi^{-1}(t(\wt{g}\op{\star}s\wt{h})+s(\wt{g}\op{\star}(1-t)\wt{h}))\nonumber.
 \end{align}
 for $g,h\in U$ and $U\se P$ open with $U^{2}\se P$ and $\varphi(U)$ convex
 (cf.\ \cite[Lemma
 1.7]{Wockel08Categorified-central-extensions-etale-Lie-2-groups-and-Lies-Third-Theorem-for-locally-exponential-Lie-algebras}).
 Since the maps
 \begin{equation*}
  U\times [0,1]\ni(g,t) \mapsto \aone{g}(t)\in G\quad\tx{ and } \quad U\times U\times \Delta^{(2)}\ni (g,h,(s,t))\mapsto \atwo{g,h}(s,t)\in G
 \end{equation*}
 are smooth, the maps $g\mapsto \aone{g}$ and $(g,h)\mapsto \atwo{g,h}$ are
 smooth on $U$ and $U\times U$ respectively
 \cite{GlocknerNeeb14Infinite-dimensional-Lie-groups}. In addition, we fix
 some $V\se U$ open with $e\in V$, $V^{2}\se U$ and $V=V^{-1}$.
\end{tabsection}

\begin{Lemma}\label{lem:definition_of_theta_tilde}
 (\cite[Lemmas 1.5,1.6 and
 1.7]{Wockel08Categorified-central-extensions-etale-Lie-2-groups-and-Lies-Third-Theorem-for-locally-exponential-Lie-algebras})
 For $g,h,k\in G$ we have that
 \begin{equation}\label{eqn:definition_of_theta_tilde}
  \locsm{\Theta}(g,h,k):=g.\atwo{h,k}-\atwo{gh,k}+\atwo{g,hk}- \atwo{g,h}
 \end{equation}
 is a closed singular 2-chain on $G$ and thus defines an element of
 $H_{2}(G)\cong \pi_{2}(G)$. Moreover, the map
 $(g,h,k)\mapsto [\locsm{\Theta}(g,h,k)]$ is a $(\pi_{2}(G)\to 0)$-valued locally
 smooth group cocycle.
\end{Lemma}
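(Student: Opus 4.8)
The plan is to read the two defining relations as statements in a single double complex and then let $\dd^{2}=0$ do the work. Write $C_{\bullet}(G)$ for the singular chain complex of $G$ equipped with its left-translation action $(g,\sigma)\mapsto g.\sigma=\lambda_{g}{\circ}\sigma$; since $\lambda_{g}$ is smooth, this action commutes with the boundary $\partial$, so $C_{\bullet}(G)$ is a complex of $G$-modules and we may form the group cochains $C^{p}_{\op{gp}}(G,C_{q}(G))$ carrying the group differential $\dd$ (acting on the $G$-variables) and the singular boundary $\partial$ (acting on the chain value), two operators which commute. In this language $\aone{}$ is a $1$-cochain valued in $C_{1}(G)$ and $\atwo{}$ is a $2$-cochain valued in $C_{2}(G)$, and the two given relations read
\begin{equation*}
 \partial\,\atwo{}=\dd\,\aone{}\qquad\text{and}\qquad\locsm{\Theta}=\dd\,\atwo{},
\end{equation*}
the second being just \eqref{eqn:definition_of_theta_tilde} recognized as the inhomogeneous formula for $\dd$ applied to the $C_{2}(G)$-valued $2$-cochain $\atwo{}$.

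With this dictionary the first assertion is immediate: since $\partial$ and $\dd$ commute and $\dd^{2}=0$,
\begin{equation*}
 \partial\,\locsm{\Theta}=\partial\,\dd\,\atwo{}=\dd\,\partial\,\atwo{}=\dd\,\dd\,\aone{}=0,
\end{equation*}
so $\locsm{\Theta}(g,h,k)$ is a singular $2$-cycle (one can also see this by the elementary termwise cancellation using \eqref{eqn:definition_of_alpha_and_beta_tilde}). Because $G$ is $1$-connected, the Hurewicz theorem gives $H_{2}(G)\cong\pi_{2}(G)$, so $[\locsm{\Theta}(g,h,k)]$ is a well-defined element of $\pi_{2}(G)$.

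For the cocycle property I would again invoke $\dd^{2}=0$, now as a chain identity $\dd\,\locsm{\Theta}=\dd\,\dd\,\atwo{}=0$ in $C_{2}(G)$, i.e.
\begin{equation*}
 g.\locsm{\Theta}(h,k,l)-\locsm{\Theta}(gh,k,l)+\locsm{\Theta}(g,hk,l)-\locsm{\Theta}(g,h,kl)+\locsm{\Theta}(g,h,k)=0.
\end{equation*}
Passing to $\pi_{2}(G)$ and using that $G$ is connected, so that left translation acts trivially on $H_{2}(G)\cong\pi_{2}(G)$, the leading term $g.\locsm{\Theta}(h,k,l)$ becomes $\locsm{\Theta}(h,k,l)$, and the identity is exactly the vanishing of the group differential of the $\pi_{2}(G)$-valued cochain $\locsm{\Theta}$. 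For local smoothness note that on the identity neighborhood $U$ where $g\mapsto\aone{g}$ and $(g,h)\mapsto\atwo{g,h}$ are smooth, $(g,h,k)\mapsto\locsm{\Theta}(g,h,k)$ is a smoothly varying family of $2$-cycles; as nearby (smoothly homotopic) cycles are homologous, the induced map into the discrete group $\pi_{2}(G)$ is locally constant, hence smooth. Together with $F=0\in C^{2}_{\op{loc}}(G,0)$ (the only choice, since $B=0$) this exhibits $(0,\locsm{\Theta})$ as a locally smooth $(\pi_{2}(G)\to 0)$-valued cocycle, the relation $\dd F=\mu{\circ}\locsm{\Theta}$ holding trivially since both sides vanish.

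The genuinely nontrivial inputs are thus external rather than computational: the Hurewicz isomorphism $H_{2}(G)\cong\pi_{2}(G)$ for the possibly infinite-dimensional $1$-connected group $G$, and the passage from a smoothly varying family of singular $2$-cycles to a \emph{locally constant} class in the discrete group $\pi_{2}(G)$. I expect the latter local-constancy step to be the main point requiring care, since it is what upgrades the purely algebraic identity $\dd^{2}=0$ to the required \emph{locally smooth} cocycle; the estimates making ``nearby cycles are homologous'' precise are exactly the content of the cited Lemmas~1.5--1.7 of \cite{Wockel08Categorified-central-extensions-etale-Lie-2-groups-and-Lies-Third-Theorem-for-locally-exponential-Lie-algebras}.
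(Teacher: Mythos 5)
Your argument is correct, and it is worth noting that the paper itself offers no in-text proof of this lemma: it simply defers to Lemmas 1.5--1.7 of the cited reference, where the identities are verified by direct termwise cancellation using \eqref{eqn:definition_of_alpha_and_beta_tilde}. Your packaging of $\aone{}$ and $\atwo{}$ as cochains in the double complex $C^{p}_{\op{gp}}(G,C_{q}(G))$, with $\partial\atwo{}=\dd\,\aone{}$ and $\locsm{\Theta}=\dd\,\atwo{}$, so that both closedness and the cocycle identity become instances of $\dd^{2}=0$ and $[\partial,\dd]=0$, is a cleaner and entirely equivalent way to organize that cancellation; the only extra input it needs is that left translation acts on $C_{\bullet}(G)$ by chain maps and trivially on $H_{2}(G)$ (by path-connectedness), both of which you supply. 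Your identification of the two genuinely nontrivial external ingredients is also accurate: the Hurewicz isomorphism $H_{2}(G)\cong\pi_{2}(G)$ for the $1$-connected group $G$, and the local constancy of the homology class of a smoothly varying family of $2$-cycles, which is what makes $\locsm{\Theta}$ locally smooth as a map into the discrete group $\pi_{2}(G)$. One small point you pass over silently, and which is part of what the cited lemmas take care of in the infinite-dimensional setting, is that $\locsm{\Theta}(g,h,k)$ is a \emph{smooth} singular chain and one must know that its class in smooth singular homology may be identified with a class in ordinary singular homology (and hence, via Hurewicz, with an element of $\pi_{2}(G)$); this is standard but not automatic for manifolds modeled on locally convex spaces. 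With that caveat recorded, your proof is complete and, if anything, more transparent than a bare citation.
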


\begin{tabsection}
 From the building blocks of $\locsm{\Theta}$, we can also construct a
 $(\pi_{2}(G)\xrightarrow{\per_{\omega}}\fz)$-valued cocycle as follows. We set
 \begin{equation}\label{eqn:definition_of_F_tilde}
  \locsm{F}(g,h):=\int_{\atwo{g,h}}\omega^{l} = \per_{\omega} (\atwo
  {g, h})  .
 \end{equation}
 Since $(g,h)\mapsto\atwo{g,h}$ is smooth on $V$ \emptycomment{\cccomment{$V$
 was not defined in the recalling paragraph. only $U$ was mentioned there. }}
 and since integration along a fixed integrand defines a smooth map
 $C^{\infty}(\Delta^{(2)},G)\to \fz$ (see \ref{sect:appendix}), it follows that
 $\locsm{F}$ is smooth on $V\times V$. From the definition, it follows directly
 that
 \begin{equation}\label{eqn:cocycle2}
  \locsm{F}(h,k)-\locsm{F}(gh,k)+\locsm{F}(g,hk)-\locsm{F}(g,h)=\int_{g.\atwo{h,k}-\atwo{gh,k}+\atwo{g,hk}- \atwo{g,h}}\omega^{l}=\per_{\omega}(\locsm{\Theta}(g,h,k))
 \end{equation}
 (where we used the fact that $\omega^{l}$ is left-invariant) and thus
 $(\locsm{F},\locsm{\Theta})$ is a
 $(\pi_{2}(G)\xrightarrow{\per_{\omega}}\fz)$-valued locally smooth cocycle on
 $G$. It has the property that it integrates $\omega$ (in an appropriate sense,
 cf.\ \cite[Theorem
 1.11]{Wockel08Categorified-central-extensions-etale-Lie-2-groups-and-Lies-Third-Theorem-for-locally-exponential-Lie-algebras})
 and it is universal with this property \cite[Corollary
 1.18]{Wockel08Categorified-central-extensions-etale-Lie-2-groups-and-Lies-Third-Theorem-for-locally-exponential-Lie-algebras}.\\

 Eventually, we now come to the point how the cocycle
 $(\locsm{F},\locsm{\Theta})$ is related to the integration procedure from
 \cite{Neeb02Central-extensions-of-infinite-dimensional-Lie-groups}.
\end{tabsection}

\begin{proposition}\label{prop:discrete-periods}
 If $\Per{\omega}\se\fz$ is discrete and $q\from \fz\to Z:= \fz/\Per{\omega}$
 is the quotient map, then $f:=q \circ \locsm{F}$ defines a locally smooth
 $(0\to Z)$-valued cocycle. It integrates $\omega$ in the sense that
 $D([f])=[\omega]$, where
 \begin{equation*}
  D\from  H^2_{\op{loc}}(G, \z/\per_{\omega}(\pi_2(G)))\to H_{c}^{2}(\fg,\fz)
 \end{equation*}
 is the differentiation homomorphism from \cite[Section
 4]{Neeb02Central-extensions-of-infinite-dimensional-Lie-groups}.
\end{proposition}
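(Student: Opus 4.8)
The plan is to verify the two assertions in turn: the cocycle and smoothness properties of $f$ are essentially formal, while the identity $D([f])=[\omega]$ rests on the van Est-type computation already encoded in $\locsm{F}$.

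First I would check that $f$ is a locally smooth $(0\to Z)$-valued cocycle. Since the $A$-component of such a cocycle is forced to be trivial, this amounts to showing $\dd f=0$ together with local smoothness. By \eqref{eqn:cocycle2} the group coboundary of $\locsm{F}$ satisfies
\begin{equation*}
 \locsm{F}(h,k)-\locsm{F}(gh,k)+\locsm{F}(g,hk)-\locsm{F}(g,h)=\per_{\omega}(\locsm{\Theta}(g,h,k)),
\end{equation*}
and by Lemma \ref{lem:definition_of_theta_tilde} the class $\locsm{\Theta}(g,h,k)$ lies in $\pi_{2}(G)$, so its period lies in $\Per{\omega}$. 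Applying the additive map $q$ therefore annihilates the right-hand side, and since $q$ commutes with the group differential this gives $\dd f=\dd(q{\circ}\locsm{F})=0$. Local smoothness follows from the smoothness of $\locsm{F}$ on $V\times V$ and of $q$, so that $f=q{\circ}\locsm{F}$ is smooth on $V\times V$.

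For the identity $D([f])=[\omega]$ I would exploit that $\Per{\omega}$ is discrete, so that $q\from\fz\to Z$ is the universal covering morphism and in particular a local diffeomorphism near $0$ whose differential is the identity under $T_{e}Z\cong\fz$. Moreover, because $\pi_{2}(G)$ is discrete and $\locsm{\Theta}$ is locally smooth and normalized, $\locsm{\Theta}$ vanishes on a neighborhood of $(e,e,e)$; hence on a neighborhood of $(e,e)$ the map $\locsm{F}$ is itself a smooth $\fz$-valued \emph{cocycle} lifting $f$ through $q$. Neeb's differentiation homomorphism, which is computed from the antisymmetrized second derivative at the identity of any smooth local lift of a locally smooth cocycle, therefore evaluates $D([f])$ as the class of the Lie algebra cocycle obtained by differentiating $\locsm{F}$. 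It thus remains to identify the latter with $[\omega]$.

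This last identification is exactly the statement that $(\locsm{F},\locsm{\Theta})$ integrates $\omega$, recorded after \eqref{eqn:cocycle2} with reference to \cite[Theorem III.8]{Wockel08Categorified-central-extensions-etale-Lie-2-groups-and-Lies-Third-Theorem-for-locally-exponential-Lie-algebras}. Concretely, $\locsm{F}(g,h)=\int_{\atwo{g,h}}\omega^{l}$ is the integral of the left-invariant form $\omega^{l}$ over the filling triangle $\atwo{g,h}$ of the multiplication $2$-simplex, which is precisely the triangle-integration construction underlying \cite[Corollary 6.3]{Neeb02Central-extensions-of-infinite-dimensional-Lie-groups}. I expect the genuinely technical point to be the explicit second-order expansion of this triangle integral: inserting the chart description \eqref{eqn:definition_of_alpha} of the filling simplices near the identity and using the left-invariance of $\omega^{l}$, the quadratic term of $\locsm{F}$ at $(e,e)$ recovers $\omega$. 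Since this van Est computation is already carried out in \cite{Neeb02Central-extensions-of-infinite-dimensional-Lie-groups} and \cite{Wockel08Categorified-central-extensions-etale-Lie-2-groups-and-Lies-Third-Theorem-for-locally-exponential-Lie-algebras}, it suffices here to observe that $\locsm{F}$ coincides with Neeb's integrating cocycle on the chosen triangles and to invoke \cite[Corollary 6.3]{Neeb02Central-extensions-of-infinite-dimensional-Lie-groups}, which yields $D([f])=[\omega]$.
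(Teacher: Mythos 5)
Your proposal is correct and follows the same route as the paper: the cocycle identity for $f$ comes from \eqref{eqn:cocycle2} together with the fact that $q$ kills $\Per{\omega}$, local smoothness is inherited from $\locsm{F}$, and the identity $D([f])=[\omega]$ is obtained by identifying $f$ with the cocycle of \cite[Section 6]{Neeb02Central-extensions-of-infinite-dimensional-Lie-groups} and invoking \cite[Corollary 6.3]{Neeb02Central-extensions-of-infinite-dimensional-Lie-groups}. The extra discussion of how $D$ is evaluated on a local lift of $f$ is consistent with Neeb's definition but is not needed beyond what the citation already supplies.
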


\begin{proof}
 Equation \eqref{eqn:cocycle2} shows that $(\dd f)(g,h,k)$ vanishes in $Z$ and
 since $\locsm{F}$ is smooth in a neighborhood of $(e,e)$, the same is true
 for $f=q \circ \locsm{F}$. Since $f$ coincides with the cocycle constructed in
 \cite[Section 6]{Neeb02Central-extensions-of-infinite-dimensional-Lie-groups}
 it integrates $\omega$ by \cite[Corollary
 6.3]{Neeb02Central-extensions-of-infinite-dimensional-Lie-groups}.
\end{proof}

\begin{tabsection}
 Since (equivalence classes of) $(0\to Z)$-valued locally smooth group cocycles
 are the same thing as central extensions of $G$ by $Z$
 \cite{Neeb02Central-extensions-of-infinite-dimensional-Lie-groups}, the
 previous proposition answers the integrability question for $\omega$ in the
 case of discrete $\Per{\omega}$.
\end{tabsection}

\subsection{Differentiable cocycles} \label{sec:differentiable_cocycles}

\begin{wrapfigure}{r}{0.4\textwidth}
	\centering
 \includegraphics[width=0.4\textwidth]{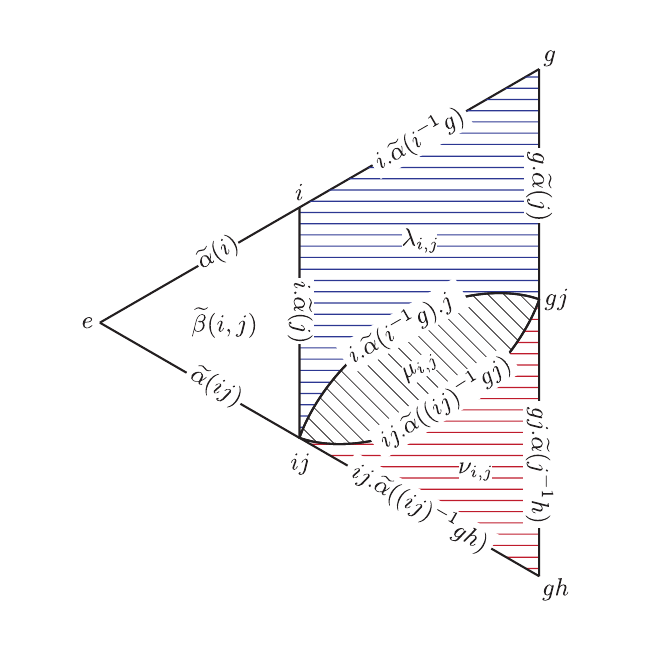}%
 \caption{Construction of $\alpha_{i,j}$ (the hatched areas depend smoothly on
 $(g,h)$ and the blank areas are constant)}%
 \label{fig:Dreieck}%
\end{wrapfigure}

 The locally smooth cocycle from the previous section lacked the global
 smoothness properties of the group structure. In the case of ordinary groups,
 the locally smooth group cocycles induced a smooth structure on the whole
 group extension, turning it into an extension of Lie group. This procedure
 made heavy used of the associativity of the group multiplication and thus does
 not seem to work for higher groups any more.
 
 In this section, we shall now enhance the construction from the previous
 section in an ad-hoc manner to a globally smooth object associated to the Lie
 algebra cocycle $\omega$, namely a differentiable cocycle with respect to an
 equivariant cover of $G$.

 We will now describe how to obtain the \v{C}ech cocycle describing the underlying 2-bundle from $(\locsm{F},\locsm{\Theta})$  (cf.\ \cite[Remark
 7.2]{Wockel08Categorified-central-extensions-etale-Lie-2-groups-and-Lies-Third-Theorem-for-locally-exponential-Lie-algebras})
 The cocycle $(\locsm{F},\locsm{\Theta})$ has the property that $\locsm{F}$ is
 smooth on $U\times U$ and $\locsm{\Theta}$ is smooth on $U\times U\times U$
 for some identity neighborhood $U\se G$. Let $V\se U$ be open such that
 $e\in V$, $V=V^{-1}$ and $V^{2}\se U$. From $V$ we obtain the open cover
 $(V_{i})_{i\in G}$ when setting
 \begin{equation}
  \label{eq:vi}
  V_{i}:=i\cdot V.
 \end{equation}
 We associate to $(\locsm{F},\locsm{\Theta})$ (see
 \eqref{eqn:definition_of_alpha_and_beta_tilde},
 \eqref{eqn:definition_of_F_tilde} and \eqref{eqn:definition_of_theta_tilde})
 the cocycle
 \begin{align}
  \gamma_{i,j}\from V_{i}\cap V_{j}\to \fz,\quad &
  g\mapsto -\locsm{F}(j,j^{-1}g)+\locsm{F}(i,i^{-1}g)-\per_{\omega}(\locsm{\Theta}(i,i^{-1}j,j^{-1}g))\label{eqn:transgression1}\\
  \eta_{i,j,l}\from V_{i}\cap V_{j}\cap V_{l}\to \pi_{2}(G),\quad &
  g\mapsto -\locsm{\Theta}(j,j^{-1}l,l^{-1}g)+\locsm{\Theta}(i,i^{-1}l,l^{-1}g)-\locsm{\Theta}(i,i^{-1}j,j^{-1}g),
  \label{eqn:transgression2}
 \end{align}
 see Section \ref{sect:interpretation_of_the_construction_i} for an
 interpretation of this assignment. 

From this definition, one immediately
 checks that $\gamma_{i,j}$ and $\eta_{i,j,l}$ satisfy \eqref{eq:120} and
 \eqref{eq:111-120} (recall that $\locsm{F}$ and $\locsm{\Theta}$ vanish
 whenever one of its arguments is $e$). That $\gamma_{i,j}$ depends smoothly on
 $g$ follows from
 \begin{equation*}
  \locsm{F}(i,i^{-1}g)-\locsm{F}(j,j^{-1}g)-\per_{\omega}(\locsm{\Theta}(i,i^{-1}j,j^{-1}g))=
  \locsm{F}(i,i^{-1}j)-i.\locsm{F}(i^{-1}j,j^{-1}g)
 \end{equation*}
 for $j^{-1}g\in V\se U$ if $g$ in $V_{j}$ and if
 $V_{i}\cap V_{j}\neq \emptyset$, then $i^{-1}j\in V^{2}\se U$. Similarly, one
 sees that $\eta_{i,j,l}$ depends smoothly on $g$ from
 \begin{equation*}
  \locsm{\Theta}(i,i^{-1}j,j^{-1}g)-
  \locsm{\Theta}(i,i^{-1}l,l^{-1}g)+
  \locsm{\Theta}(j,j^{-1}l,l^{-1}g) =
  \locsm{\Theta}(i,i^{-1}j,j^{-1}l)+
  i.\locsm{\Theta}(i^{-1}j,j^{-1}l,l^{-1}g)
 \end{equation*}

\begin{wrapfigure}{l}{0.61\textwidth}
 \centering%
 \includegraphics[width=0.59\textwidth]{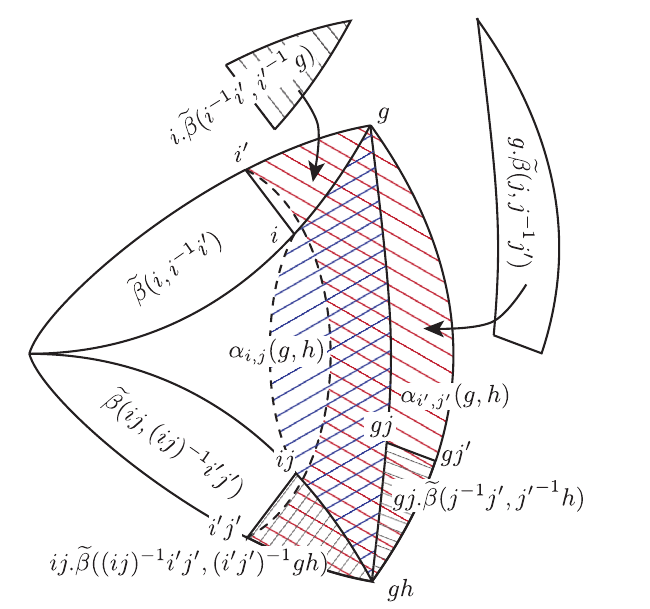}%
 \caption{Construction of $\Phi_{(i,j),(i',j')}$}%
 \label{fig:Kugel}	
\end{wrapfigure}

 The construction from Section \ref{sec:principal-bundle} now gives a Lie
groupoid $c=(V_{i},\gamma_{i,j},\eta_{i,j,l})_{i,j,k\in G}$. We call this Lie
groupoid from now on $\cG_{\omega}$. We will justify this notation later on
when showing that $\cG_{\omega}$ does up to equivalence not depend on all the
choices that we made).

 This Lie groupoid will be shown to carry the structure of a Lie-2 group that
integrates $\omega$ in the general case, regardless of the question whether
$\per_{\omega}(\pi_{2}(G))$ is discrete or not.

 In what follows, we stick to the notation introduced in Section
 \ref{sect:locally_smooth_cocycles}. What we choose in addition is for each
 $j\in G$ some open identity neighborhood $W_{j}\se V$ with the property that
 $j^{-1} W_{j} j\se V$ and that $\varphi(W_{j})\se \fg$ is convex. In the case
 that $G=C^{\infty}(M,K)$ (for $M$ a compact manifold) or in the case that
 $G=C(X,K)$ (for $X$ a compact space) and $K$ a Lie group with compact Lie
 algebra, we may without loss of generality assume that $W_{j}=V$. In fact, in these cases there
 exists convex $\op{Ad}$-invariant zero neighborhoods in $\fg$ on which the
 exponential map restricts to a diffeomorphism. This yields an equivariant chart
 $\varphi$ with respect to the conjugation action on $G$ and the adjoint action on $\fg$.
 
 Having fixed these choices we set
 \begin{equation}\label{eqn:definition_of_V_ij}
  V_{i,j}:=\{(g,h)\in G\times G: i^{-1}g\in W_{j}, j^{-1}h\in V, (ij)^{-1} gh\in V\}.
 \end{equation}
 Since $(g,h)\in V_{g,h}$, we clearly have an open cover of $G\times G$ and
 because the indexing set is $G\times G$, we have canonical maps
 $p_{1},p_{2},p_{3}\from G\times G\to G$ satisfying
 $p_{a}(V_{i,j})\se V_{p_{a}(i,j)}$ for
$a=1,2,3$.

\begin{lemma}\label{lem:defining_properties_of_alpha_ij}
 Let $C^{\infty}_{pw}(\Sigma,G)$ denote the space of piece-wise smooth maps
 from the simplicial complex $\Sigma$ in Figure \ref{fig:Triangulation} to $G$
 (cf.\ Remark \ref{rem:piece-wise-smooth-maps}). On $V_{i,j}$ there exists a
 smooth map $\alpha_{i,j}\from V_{i,j}\to C^{\infty}_{pw}(\Sigma,G)$ such that
 \begin{equation}\label{eqn:boundary_of_alpha_ij}
  \psing\alpha_{i,j}(g,h)=
  \aone{i}+i.\aone{i^{-1}g}+g.\aone{j}+gj.\aone{j^{-1}h}-
  ij.\aone{(ij)^{-1}gh}-\aone{ij}
 \end{equation}
 (see Figure \ref{fig:Dreieck}).
\end{lemma}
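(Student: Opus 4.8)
The plan is to write $\alpha_{i,j}(g,h)$ explicitly as a sum of three $2$-chains filling the hexagonal loop $e\to i\to g\to gj\to gh\to ij\to e$ traced out by the right-hand side of \eqref{eqn:boundary_of_alpha_ij}. Since $G$ is $1$-connected this loop bounds, so the entire content of the lemma is that the filling can be chosen to depend \emph{smoothly} on $(g,h)\in V_{i,j}$ and to be piece-wise smooth over a \emph{fixed} triangulation $\Sigma$. I abbreviate $x:=i^{-1}g\in W_{j}$ and $y:=j^{-1}h\in V$, and I use the conjugation condition $j^{-1}W_{j}j\se V$ in the form $xj=jx'$ with $x':=j^{-1}xj\in V$; then $gj=ijx'$, $gh=ijx'y$ and $z:=(ij)^{-1}gh=x'y\in V^{2}\se U$.

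First I would check that each of the six boundary edges is smooth in $(g,h)$ on $V_{i,j}$: the edges $\aone{i}$ and $\aone{ij}$ are constant; $i.\aone{i^{-1}g}$, $gj.\aone{j^{-1}h}$ and $ij.\aone{(ij)^{-1}gh}$ are translates (by the constants $i,ij$, respectively the smooth $gj$) of $\aone{(\cdot)}$ evaluated at the small arguments $i^{-1}g,\,j^{-1}h,\,(ij)^{-1}gh\in V$, on which $\aone{(\cdot)}$ is smooth; and $g.\aone{j}=L_{g}\circ\aone{j}$ is smooth since $g\mapsto L_{g}$ is smooth and $\aone{j}$ is a fixed path. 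Hence the issue is confined entirely to the two-dimensional filling.

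I would then set
\[
 \alpha_{i,j}(g,h)=\atwo{i,j}\;+\;\square_{i,j}(g,h)\;+\;ij.\triangle_{i,j}(g,h),
\]
where (a) $\atwo{i,j}$ is the constant (``blank'' in Figure \ref{fig:Dreieck}) simplex filling $(e,i,ij)$, with $\partial\atwo{i,j}=\aone{i}+i.\aone{j}-\aone{ij}$; (b) $\square_{i,j}(g,h)$ is the smooth square $(s,t)\mapsto i\cdot\aone{i^{-1}g}(s)\cdot\aone{j}(t)$ filling the quadrilateral $(i,g,gj,ij)$, whose edges are $i.\aone{i^{-1}g}$, $g.\aone{j}$, the interior edge $E\from s\mapsto i\,\aone{i^{-1}g}(s)\,j$ (from $ij$ to $gj$), and $i.\aone{j}$; and (c) $\triangle_{i,j}(g,h)$ fills the triangle $(e,x',x'y)$ whose edges are the conjugated path $c\from s\mapsto j^{-1}\aone{i^{-1}g}(s)\,j$ (from $e$ to $x'$), $x'.\aone{y}$ and $\aone{x'y}$. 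A direct telescoping (over the ``hatched'' region) then cancels the interior edges $i.\aone{j}$ and $E$ and shows that $\partial\alpha_{i,j}(g,h)$ equals the right-hand side of \eqref{eqn:boundary_of_alpha_ij}.

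The delicate point --- the crux of the lemma --- is that $\triangle_{i,j}(g,h)$ can be chosen smoothly, and this is exactly where the conjugation condition and the convexity of the chart enter. By $j^{-1}W_{j}j\se V$ the conjugated edge $c(s)=j^{-1}\aone{i^{-1}g}(s)\,j$ stays inside $V$ (since $\aone{i^{-1}g}(s)\in W_{j}$ by convexity of $\varphi(W_{j})$), while $x'.\aone{y}(t)$ and $\aone{x'y}$ remain in $U^{2}\se P$ by convexity of $\varphi(U)$; thus the whole boundary loop of $\triangle_{i,j}(g,h)$ lies in the chart domain $P$, and I can fill it by coning to $0=\varphi(e)$ in the chart, $\varphi^{-1}\big(r\cdot\varphi(\,\cdot\,)\big)$ with $r\in[0,1]$, which stays in $P$ because $\wt{P}=\varphi(P)$ is convex and is smooth in $(s\text{ or }t,r,g,h)$. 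The three pieces agree on their common edges ($i.\aone{j}$ shared by (a),(b) and $E$ shared by (b),(c)), so they glue to a single piece-wise smooth map on the fixed disc $\Sigma$, and smoothness of $\alpha_{i,j}\from V_{i,j}\to C^{\infty}_{pw}(\Sigma,G)$ follows from that of each building block via the exponential law for the relevant mapping spaces. The main obstacle is precisely this smooth dependence on $(g,h)$: the large fixed elements $i,j,ij$ forbid any naive filling by the simplices $\atwo{a,b}$ (which are smooth only for $a,b\in U$), and it is the product square $\square_{i,j}$ together with the conjugation condition $j^{-1}W_{j}j\se V$ and the convexity of $\wt{P}$ that circumvent this.
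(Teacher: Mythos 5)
Your proof is correct and follows essentially the same route as the paper: your constant filling $\atwo{i,j}$, product square $\square_{i,j}$ and use of the conjugation condition $j^{-1}W_{j}j\se V$ are exactly the paper's $\atwo{i,j}$, $\lambda_{i,j}$ and $\mu_{i,j}$, the only cosmetic difference being that you merge the paper's last two pieces $\mu_{i,j}$ and $\nu_{i,j}=ij.\atwo{(ij)^{-1}gj,\,j^{-1}h}$ into a single radially coned triangle $ij.\triangle_{i,j}$ over $(e,x',x'y)$. Since the cone $\varphi^{-1}(r\cdot\varphi(\,\cdot\,))$ is precisely how $\aone{}$ and $\atwo{}$ are defined in the convex chart, your filling agrees with the paper's up to reparametrization, and both the boundary computation and the smoothness argument coincide.
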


\begin{proof}
 We first observe that
 \begin{equation*}
  V_{i,j}\times[0,1]^{2} \ni((g,h),(s,t))\mapsto i\cdot \aone{i^{-1}g}(s)\cdot \aone{j}(1-t)\in G
 \end{equation*}
 defines a smooth map (since $i^{-1}g\in V$ if $(g,h)\in V_{i,j}$) and thus a
 smooth map $\lambda_{i,j}\from V_{i,j}\to C^{\infty}([0,1]^{2},G)$. Moreover,
 we may choose an orientation on $[0,1]^{2}$ such that
 $\left.\lambda_{i,j}(g,h)\right|_{\partial [0,1]^{2}}$ is the piece-wise
 smooth path
 \begin{equation*}
  i.\aone{i^{-1}g}*g.\aone{j}*\ol{i.\aone{i^{-1}g}.j}*\ol{i.\aone{j}}
 \end{equation*}
 (here, $*$ means concatenation of paths and $\ol{~\vphantom{\cdot}~}$ means
 orientation reversion). Choosing an appropriate triangulation of $[0,1]^{2}$
 then gives a map
 $\lambda_{i,j}\from V_{i,j}\to C^{\infty}_{pw}(\Sigma,G)$ with
 \begin{equation*}
  \psing \lambda_{i,j}(g,h)=
  i.\aone{i^{-1}g}+g.\aone{j}-{i.\aone{i^{-1}g}.j}-{i.\aone{j}}.
 \end{equation*}
 Next, we consider the map
 \begin{equation}\label{eqn1}
  V_{i,j}\times[0,1]^{2}\ni ((g,h),(s,t))\mapsto ij.\aone{(j^{-1}\cdot\takeCAREofTHIS(s)\cdot j)}(t)\in G.
 \end{equation}
 Since $\varphi(W_{j})\se\fg$ is convex and since $i^{-1}g\in W_{j}\se V$ it
 follows from the construction of $\aone{i^{-1}g}$ in
 \eqref{eqn:definition_of_alpha} that
 $j^{-1}\aone{i^{-1}g}(s)j\in j^{-1}W_{j}j\se V$ for all $s\in [0,1]$ and thus
 \eqref{eqn1} defines a smooth function. This in turn restricts to a piecewise
 smooth function $V_{i,j}\to C^{\infty}([0,1],G)$ on the boundary
 $\partial [0,1]^{2}$,
 \begin{equation*}
  i.\aone{i^{-1}g}.j*\ol{   ij.\aone{(ij)^{-1}gj}}*\ol{\vphantom{j}c_{ij}}*c_{ij}
 \end{equation*}
 (where $c_{ij}$ denotes the constant path at $ij\in G$). Choosing the same
 triangulation of $[0,1]^{2}$ as above this yields a smooth map
 $\mu_{i,j}\from V_{i,j}\to C^{\infty}_{pw}(\Sigma,G)$ with
 \begin{equation*}
  \psing \mu_{i,j}(g,h)= i.\aone{i^{-1}g}.j-ij.\aone{(ij)^{-1}gj}.
 \end{equation*}
 Since $\left.\atwo{}\right|_{V\times V}$ is smooth, it follows that
 \begin{equation*}
  \nu_{i,j}\from V_{i,j}\to C^{\infty}_{pw}(\Sigma,G), \quad (g,h)\mapsto ij.\atwo{(ij)^{-1}gj,j^{-1}h}
 \end{equation*}
 is smooth with
 \begin{equation*}
  \psing \nu_{i,j}(g,h)=ij.\aone{(ij)^{-1}gj}+gj.\aone{j^{-1}h}-ij.\aone{(ij)^{-1}gh})
 \end{equation*}
 All together, we have that
 $\aone{i,j}:=\atwo{i,j}+\lambda_{i,j}+\mu_{i,j}+\nu_{i,j}$ has the desired
 properties (where we interpret $\atwo{i,j}\in C^{\infty}_{pw}(\Sigma,G)$
 as a constant map).
\end{proof}

\begin{lemma}
 If $\alpha_{i,j}\from V_{i,j}\to C^{\infty}_{pw}(\Sigma,G)$ is smooth and
 satisfies \eqref{eqn:boundary_of_alpha_ij}, then
 \begin{align}\label{eqn:definition_of_Phi_iji'j'}
  \Phi_{(i,j),(i',j')}\from V_{i,j}\cap V_{i',j'}\to \langle C^{\infty}_{pw}(\Sigma,G)\rangle_{\Z},
  \quad(g,h)\mapsto &
  \alpha_{i,j}(g,h)-\alpha_{i',j'}(g,h)\\
  -&\atwo{i,i^{-1}i'}+i.\atwo{i^{-1}i',i'^{-1}g}\notag\\
  -&g.\atwo{j,j^{-1}j'}+gj.\atwo{j^{-1}j',j'^{-1}h}\notag\\
  +&\atwo{ij,(ij)^{-1}i'j'}-ij.\atwo{(ij)^{-1}i'j',(i'j')^{-1}gh}\notag
 \end{align}
 (see Figure \ref{fig:Kugel}) actually takes values in the singular 2-cycles
 $Z_{2}(G)$ and determines a smooth (aka locally constant) map to
 $H_{2}(G)\cong \pi_{2}(G)$. We denote this map
 $V_{i,j}\cap  V_{i',j'}\to \pi_{2}(G)$ also by $\Phi_{(i,j),(i',j')}$ for
 simplicity.
\end{lemma}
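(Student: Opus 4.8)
The statement packages two separate assertions: first, that $\Phi_{(i,j),(i',j')}(g,h)$ is a genuine $2$-cycle, i.e.\ $\partial\Phi_{(i,j),(i',j')}(g,h)=0$; second, that the resulting class in $H_{2}(G)\cong\pi_{2}(G)$ varies locally constantly with $(g,h)$. I would prove these in turn. For the cycle property the plan is to apply $\partial$ to the right-hand side of \eqref{eqn:definition_of_Phi_iji'j'}, using that $\partial$ is $\Z$-linear and commutes with the left-translation action of $G$ on singular chains (left translations are diffeomorphisms, so they induce chain maps). The two $\alpha$-terms then contribute via \eqref{eqn:boundary_of_alpha_ij}, while each of the six $\beta$-terms contributes via the defining relation $\partial\atwo{g,h}=g.\aone{h}-\aone{gh}+\aone{g}$ from \eqref{eqn:definition_of_alpha_and_beta_tilde}, after pulling the left-translation prefactor through $\partial$.

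The outcome is a signed sum of roughly two dozen translated $1$-simplices $\aone{\bullet}$, which I expect to cancel completely in pairs. The ``corner'' paths $\aone{i},\aone{i'},\aone{ij},\aone{i'j'}$ produced by $\partial\atwo{i,i^{-1}i'}$ and $\partial\atwo{ij,(ij)^{-1}i'j'}$ cancel against the corresponding corner paths in $\partial\alpha_{i,j}-\partial\alpha_{i',j'}$. The $(g,h)$-dependent edges $i.\aone{i^{-1}g}$, $gj.\aone{j^{-1}h}$, $ij.\aone{(ij)^{-1}gh}$ from the $\alpha$-boundaries cancel against their primed counterparts generated by the translated $\beta$-boundaries; for instance $i.\partial\atwo{i^{-1}i',i'^{-1}g}$ yields exactly $i'.\aone{i'^{-1}g}-i.\aone{i^{-1}g}+i.\aone{i^{-1}i'}$, whose first two summands match the $\alpha$-edges and whose last summand pairs with the internal edge from $\partial\atwo{i,i^{-1}i'}$. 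The remaining internal edges $gj.\aone{j^{-1}j'}$ and $ij.\aone{(ij)^{-1}i'j'}$ cancel between adjacent $\beta$-boundaries. Geometrically this is precisely the assertion that the six triangular faces of Figure \ref{fig:Kugel} glue along their shared edges into a closed surface.

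For local constancy I would first check that $\Phi_{(i,j),(i',j')}$ is a smooth map of $(g,h)$ into $\langle C^{\infty}_{pw}(\Sigma,G)\rangle_{\Z}$. The two $\alpha$-terms are smooth by hypothesis. Among the $\beta$-terms, $\atwo{i,i^{-1}i'}$, $\atwo{ij,(ij)^{-1}i'j'}$ and $g.\atwo{j,j^{-1}j'}$ involve only the fixed chains determined by the indices (the last left-translated by the smooth factor $g$), hence are smooth in $(g,h)$; the remaining terms $i.\atwo{i^{-1}i',i'^{-1}g}$, $gj.\atwo{j^{-1}j',j'^{-1}h}$ and $ij.\atwo{(ij)^{-1}i'j',(i'j')^{-1}gh}$ have both arguments lying in $U$ on the overlap (for instance $(ij)^{-1}i'j'\in V^{2}\se U$ and $(i'j')^{-1}gh\in V$), so the joint smoothness of $(g,h)\mapsto\atwo{g,h}$ on $U\times U$ combined with smoothness of left translation applies. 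By the first part this smooth map lands in $Z_{2}(G)$.

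Finally, since $G$ is $1$-connected the Hurewicz isomorphism gives $H_{2}(G)\cong\pi_{2}(G)$, which is a discrete group, and a smoothly (hence continuously) varying family of $2$-cycles has locally constant homology class---the same mechanism already invoked for $\locsm{\Theta}$ in Lemma \ref{lem:definition_of_theta_tilde}. Concretely, over a contractible neighborhood one joins $(g,h)$ to a basepoint by a smooth path, sweeps the cycle along it to build a $3$-chain whose boundary is the difference of the two cycles up to degeneracies, and concludes that the class is unchanged. I expect the main obstacle to be the sign-and-prefactor bookkeeping in the boundary cancellation of the second paragraph; once that is organized correctly, smoothness and local constancy follow directly from the constructions of Section \ref{sect:locally_smooth_cocycles} and Lemma \ref{lem:defining_properties_of_alpha_ij}.
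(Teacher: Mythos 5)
Your proposal is correct and follows essentially the same route as the paper: the authors likewise expand $\partial\Phi_{(i,j),(i',j')}$ using \eqref{eqn:boundary_of_alpha_ij} for the two $\alpha$-terms and \eqref{eqn:definition_of_alpha_and_beta_tilde} for the six translated $\atwo{}$-terms, observe the complete pairwise cancellation you describe, and derive smoothness from the smoothness of $\alpha_{i,j}$, of $\left.\atwo{}\right|_{U\times U}$, and from $i^{-1}i'\in V^{2}\se U$ on a nonempty overlap. Your extra detail on why a smoothly varying family of $2$-cycles has locally constant class in the discrete group $\pi_{2}(G)$ is correct and merely makes explicit what the paper leaves implicit.
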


\begin{proof}
 The smoothness of $\Phi_{(i,j),(i',j')}$ follows from the smoothness of
 $\alpha_{i,j}$, $\left.\beta\right|_{U\times U}$ and from the fact that
 $i^{-1}i'\in V^{2}\se U$ if $V_{i,j}\cap V_{i',j'}\neq \emptyset$.
 Now \eqref{eqn:definition_of_alpha_and_beta_tilde} and \eqref{eqn:boundary_of_alpha_ij} imply
 \begin{align*}
  &  \psing \Phi_{(i,j),(i',j')}(g,h) \\
  =& \aone{i}+i.\aone{i^{-1}g}+g.\aone{j}+
  gj.\aone{j^{-1}h}-ij.\aone{(ij)^{-1}gh}-\aone{ij}\\
  -&\aone{i'}-i'.\aone{i'^{-1}g}-g.\aone{j'}
  -gj'.\aone{j'^{-1}h}+i'j'.\aone{(i'j')^{-1}gh}+\aone{i'j'}\\
  -&(\aone{i}+i.\aone{i^{-1}i'}-\aone{i'})
  +i.(\aone{i^{-1}i'}+i^{-1}i'.\aone{i'^{-1}g}-\aone{i^{-1}g})\\
  -&.g(\aone{j}+j.\aone{j^{-1}j'}-\aone{j'})
  +gj.(\aone{j^{-1}j'}+j^{-1}j'.\aone{j'^{-1}h}-\aone{j^{-1}h})\\
  +&(\aone{ij}+ij.\aone{(ij)^{-1}i'j'}-\aone{i'j'})\\
  -&ij.(\aone{(ij)^{-1}i'j'}+(ij)^{-1}i'j'.\aone{i'j'^{-1}gh}-
  \aone{(ij)^{-1}gh})=0,
 \end{align*}
 and the claim follows.
\end{proof}

\begin{tabsection}
 The maps $\alpha_{i,j}\from V_{i,j}\to C^{\infty}_{pw}(\Sigma,G)$
 (composed with the integration map $C^{\infty}_{pw}(\Sigma,G)\to \fz$) and
 $\Phi_{(i,j),(i',j')}\from V_{i,j}\cap V_{i',j'}\to \pi_{2}(G)$ will yield
 multiplication morphisms on the groupoid $\cG_{\omega}$. What we need in
 addition in order to turn $\cG_{\omega}$ into a \tgp is a 2-morphism yielding
 the associator. This will be furnished by the next construction. For this, we
 note that we have an open cover
 \begin{equation}\label{eqn:definition_of_V_ijl}
  V_{i,j,l}:=\{(g,h,k)\in G^{\times 3}: (g,h)\in V_{i,j},(g,hk)\in V_{i,jl}, (gh,k)\in V_{ij,l}, (h,k)\in V_{j,l}\}.
 \end{equation}
 of $G\times G\times G$ and the canonical maps
 $p_{a}\from G^{\times   3}\to G^{\times 2}$ (for $a=1,2,3,4$) satisfy
 $p_{a}(V_{i,j,l})\se V_{p(i,j,l)}$.
\end{tabsection}

\begin{lemma}
 If $\alpha_{i,j}\from V_{i,j}\to C^{\infty}_{pw}(\Sigma,G)$ is smooth and satisfies
 \eqref{eqn:boundary_of_alpha_ij} then for each $(i,j,l)\in G^{3}$, the map
 \begin{equation}\label{eqn:definition_of_Theta_ijl}
  \Theta_{i,j,l}\from V_{i,j,l}\to \langle C^{\infty}_{pw}(\Sigma,G)\rangle_{\Z},\quad (g,h,k)\mapsto
  -g.\alpha_{j,l}(h,k)+\alpha_{ij,l}(gh,k)-\alpha_{i,jl}(g,hk)+\alpha_{i,j}(g,h)
 \end{equation}
 takes values in the singular 2-chains $Z_{2}(G)$ and determines a smooth (aka
 constant) map to $H_{2}(G)\cong \pi_{2}(G)$.
\end{lemma}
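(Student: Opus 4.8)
The plan is to prove the two assertions in turn: first that every value $\Theta_{i,j,l}(g,h,k)$ is a singular $2$-cycle, and then that the induced class in $H_2(G)$ depends locally constantly on $(g,h,k)$. For closedness I would use that the singular boundary $\partial$ is $\Z$-linear and commutes with the left-translation action of $G$ on chains (left translation being a diffeomorphism). Applying $\partial$ to \eqref{eqn:definition_of_Theta_ijl} term by term therefore gives
\[
 \partial\Theta_{i,j,l}(g,h,k)=-g.\partial\alpha_{j,l}(h,k)+\partial\alpha_{ij,l}(gh,k)-\partial\alpha_{i,jl}(g,hk)+\partial\alpha_{i,j}(g,h),
\]
and into each of the four summands I would substitute the defining boundary formula \eqref{eqn:boundary_of_alpha_ij}.

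The key observation is that $\Theta_{i,j,l}$ carries exactly the alternating shape $-g.(-)+(-)-(-)+(-)$ of the group coboundary $\dd$ in the outer multiplication variables $(g,h,k)$ --- the same pattern that produced the cocycle $\locsm{\Theta}$ in Lemma \ref{lem:definition_of_theta_tilde}. Since $\partial$ commutes with left translation, $\partial\Theta_{i,j,l}(g,h,k)$ is the alternating sum of the edge-expressions supplied by \eqref{eqn:boundary_of_alpha_ij}, and I expect every contribution to telescope away. Concretely, after the substitution one obtains twenty-four translated $\aone{}$-edges that cancel in pairs: for instance the $gh.\aone{l}$ coming from $-g.\partial\alpha_{j,l}(h,k)$ cancels the one from $\partial\alpha_{ij,l}(gh,k)$, the two $\aone{ijl}$ edges from the middle two summands cancel, the two copies of $ijl.\aone{(ijl)^{-1}ghk}$ cancel, and so on until nothing is left. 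Hence $\partial\Theta_{i,j,l}(g,h,k)=0$, i.e.\ $\Theta_{i,j,l}(g,h,k)\in Z_2(G)$. This is the very same bookkeeping already carried out for $\Phi_{(i,j),(i',j')}$ in \eqref{eqn:definition_of_Phi_iji'j'}, only with the outer faces replaced by the multiplication faces of $BG_{\bullet}$.

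For the local constancy I would argue exactly as for $\Phi_{(i,j),(i',j')}$: each $\alpha_{i,j}$ is smooth into $C^{\infty}_{pw}(\Sigma,G)$, and the multiplication and the action of $G$ are smooth, so $(g,h,k)\mapsto \Theta_{i,j,l}(g,h,k)$ is a smooth (in particular continuous) family of $2$-cycles on $V_{i,j,l}$. Postcomposing with the projection $Z_2(G)\to H_2(G)\cong\pi_2(G)$ onto the \emph{discrete} group $\pi_2(G)$ forces the induced map to be locally constant, hence constant on each connected component of $V_{i,j,l}$; this is the asserted smoothness.

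I expect the only genuine obstacle to be the combinatorial bookkeeping in the boundary computation: one must keep the left-translation prefactors and the group products $i,ij,jl,ijl,\dots$ aligned so that the substituted $\aone{}$-edges really do match and cancel. There is no new conceptual ingredient beyond what was needed for $\Phi_{(i,j),(i',j')}$, so once the terms are listed in the telescoping order indicated above the verification should be routine.
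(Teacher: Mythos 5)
Your proposal is correct and follows essentially the same route as the paper: apply $\partial$ to the defining alternating sum, substitute the boundary formula \eqref{eqn:boundary_of_alpha_ij} into each of the four terms, and observe that all the translated $\aone{}$-edges cancel in pairs, with the passage to $\pi_{2}(G)$ then being locally constant because $\alpha_{i,j}$ is smooth and $\pi_{2}(G)$ is discrete. The sample cancellations you exhibit (e.g.\ the two copies of $gh.\aone{l}$ and of $ijl.\aone{(ijl)^{-1}ghk}$) are exactly the ones that occur in the paper's displayed computation.
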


\begin{proof}
 From \eqref{eqn:boundary_of_alpha_ij} we get
 \begin{align*}
  \psing \Theta_{i,j,l}(g,h,k)=
  &g.(\aone{j}+j.\aone{j^{-1}h}+h.\aone{l}+hl.\aone{l^{-1}k}-jl.\aone{(jl)^{-1}hk}-\aone{jl})\\
  -&\aone{ij}+ij.\aone{(ij)^{-1}gh}+gh.\aone{l}+ghl.\aone{l^{-1}k}-ijl.\aone{(ijl)^{-1}ghk}-\aone{ijl}\\
  &(\aone{i}+i.\aone{i^{-1}g}+g.\aone{jl}+gjl.\aone{(jl)^{-1}hk}-ijl.\aone{(ijl)^{-1}ghk}-\aone{ijl})\\
  -&\aone{i}+i.\aone{i^{-1}g}+g.\aone{j}+gj.\aone{j^{-1}h}-ij.\aone{(ij)^{-1}gh}-\aone{ij}
  =0.
 \end{align*}
 Then the claim follows from and the fact that $\alpha_{i,j}$ is smooth on
 $V_{i,j}$.
\end{proof}

We now set
\begin{equation}\label{eqn:definition_of_F_ij}
 F_{i,j}:=\per_{\omega}{\circ}\alpha_{i,j} (:= \int_{\alpha_{i,j}}
 \omega^l ),
\end{equation}
which is a smooth map from $V_{i,j}$ to $\fz$ by the assumption on
$\alpha_{i,j}$.

\begin{lemma}
 \label{rem:subsuming_the_differentiable_cocycle} There is a simplicial cover
 of $\bB G_\bullet$ with $\cU^{(1)} = \{ V_i\}$, $\cU^{(2)}= \{ V_{i,j}\} $ and
 $\cU^{(3)}=\{ V_{i, j, l} \}$ with $V_i$, $V_{i,j}$ and $V_{i,j,l}$ defined in
 \eqref{eq:vi}, \eqref{eqn:definition_of_V_ij} and
 \eqref{eqn:definition_of_V_ijl} respectively. Moreover
 \begin{align*}
  (\gamma_{i,j},\eta_{i,j,l},F_{i,j}, -\Phi_{(i,j),(i',j')},\Theta_{i,j,k})
 \end{align*}
 as defined in \eqref{eqn:transgression1}, \eqref{eqn:transgression2},
 \eqref{eqn:definition_of_F_ij}, \eqref{eqn:definition_of_Phi_iji'j'} and
 \eqref{eqn:definition_of_Theta_ijl} constitutes a differentiable
 $\pi_{2}(G)\xrightarrow{\per_{\omega}}\fz$-valued cocycle on $\bB G_{\bullet}$
 with respect to this cover.
\end{lemma}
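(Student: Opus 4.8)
The plan is to split the lemma into two tasks: exhibiting the simplicial cover, and then checking that the assembled data $(\gamma,\eta,F,-\Phi,\Theta)$ is a cocycle for the total differential $D_{3}$, i.e.\ that it satisfies the seven equations \eqref{eq:120}, \eqref{eq:210-120}, \eqref{eq:300-210}, \eqref{eq:300}, \eqref{eq:111-120}, \eqref{eq:201-210-111} and \eqref{eq:300-201} (with the convention that the component $\phi^{2,1,0}$ is $-\Phi$).

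For the cover I would take the indexing simplicial set to be the nerve of $G$ itself, with $I^{(1)}=G$, $I^{(2)}=G\times G$, $I^{(3)}=G^{3}$ and structure maps given by group multiplication as in Example \ref{ex:nerve}. The essential point is face compatibility, namely that the face maps of $BG_{\bullet}$ send $V^{(n)}_{i}$ into $V^{(n-1)}_{I^{\bullet}(d_{k})(i)}$; this is exactly what the definitions \eqref{eqn:definition_of_V_ij} and \eqref{eqn:definition_of_V_ijl} were engineered to guarantee, as recorded in $p_{a}(V_{i,j})\se V_{p_{a}(i,j)}$ and $p_{a}(V_{i,j,l})\se V_{p(i,j,l)}$, and it is what makes $\dsimp$ and $\check{\delta}$ well defined on our data. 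To obtain a cover in all simplicial degrees I would either invoke the extension lemma from Section \ref{sect:differentiable_hypercohomology} or continue the same intersection-of-pullbacks pattern into degrees $\ge 4$. Since every cochain is built from $\locsm F$, $\locsm\Theta$ and $\alpha_{i,j}$, all of which vanish as soon as an argument equals $e$, the resulting cocycle is automatically normalized and Corollary \ref{cor:normalization} applies.

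For the cocycle conditions I would group the equations. The two conditions \eqref{eq:120} and \eqref{eq:111-120}, involving only $\gamma$ and $\eta$, are immediate from the defining formulas \eqref{eqn:transgression1}, \eqref{eqn:transgression2} together with the group-cocycle property of $(\locsm F,\locsm\Theta)$, and were already observed just after \eqref{eqn:transgression2}. The three purely geometric, $\pi_{2}(G)$-valued identities \eqref{eq:210-120}, \eqref{eq:300-210} and \eqref{eq:300} I would obtain from $2$-chain boundary computations in the spirit of the three preceding lemmas: $\Phi_{(i,j),(i',j')}$ and $\Theta_{i,j,l}$ are by construction the $H_{2}(G)\cong\pi_{2}(G)$-classes of explicit closed $2$-chains, and each required alternating sum of these classes is represented by a $2$-chain that bounds (or cancels at chain level), hence vanishes in $H_{2}(G)$. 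In particular \eqref{eq:300} is cleanest: modulo the left translations, which act trivially on $H_{2}(G)$ since $G$ is connected, formula \eqref{eqn:definition_of_Theta_ijl} says $\Theta=-\dsimp\alpha$ at the chain level, so $\dsimp\Theta=-\dsimp^{2}\alpha=0$.

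The remaining two equations \eqref{eq:300-201} and \eqref{eq:201-210-111} are $\fz$-valued and I would derive them by applying the period map $\per_{\omega}=\int(\cdot)\,\omega^{l}$, exploiting the left invariance of $\omega^{l}$ (which erases the $g.$-translations) together with $F_{i,j}=\per_{\omega}\circ\alpha_{i,j}$ from \eqref{eqn:definition_of_F_ij} and $\locsm F=\per_{\omega}\circ\atwo{}$ from \eqref{eqn:definition_of_F_tilde}. Integrating the definition \eqref{eqn:definition_of_Theta_ijl} of $\Theta_{i,j,l}$ then gives $\dsimp F=(\per_{\omega})_{*}\Theta$, which is \eqref{eq:300-201}. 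The genuine obstacle is \eqref{eq:201-210-111}: it mixes the transgressed cocycle $\gamma$, built from $\locsm F$ and $\per_{\omega}\circ\locsm\Theta$ (i.e.\ from the $\atwo{}$-blocks), with the geometric $F$ and $\Phi$, built from the $\alpha_{i,j}$. The reconciliation rests on the decomposition $\alpha_{i,j}=\atwo{i,j}+\lambda_{i,j}+\mu_{i,j}+\nu_{i,j}$ of Lemma \ref{lem:defining_properties_of_alpha_ij} and on the boundary formula \eqref{eqn:boundary_of_alpha_ij}: after applying $\per_{\omega}$ to the definition \eqref{eqn:definition_of_Phi_iji'j'} of $\Phi$ and using $\locsm F=\per_{\omega}\circ\atwo{}$ for the $\atwo{}$-summands, the six boundary edges appearing in \eqref{eqn:boundary_of_alpha_ij} are precisely the ones that make $(\per_{\omega})_{*}\Phi$ recombine into $\dsimp\gamma+\check{\delta}F$. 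Tracking these edges — exactly the content encoded by Figures \ref{fig:Dreieck} and \ref{fig:Kugel} — is the one genuinely laborious bookkeeping step; everything else is formal once the period map and left invariance are used.
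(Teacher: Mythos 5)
Your proposal is correct and follows essentially the same route as the paper: the same nerve-indexed cover $I^{(n)}=G^{n}$ extended to higher degrees by intersecting preimages under the face maps, and the same verification of the seven cocycle identities by plugging in the definitions — the $\pi_{2}(G)$-valued ones by chain-level cancellation (with $\dsimp\Theta=0$ coming from $\dsimp^{2}=0$), and the $\fz$-valued ones by applying $\per_{\omega}$ together with the left invariance of $\omega^{l}$ and the boundary formula \eqref{eqn:boundary_of_alpha_ij}. The only cosmetic difference is that the paper carries out the intersection-of-pullbacks construction directly rather than citing the extension lemma, which is the workable alternative you also name.
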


\begin{proof}
 A simplicial cover of $\bB G_{\bullet}$ is induced as follows. As indexing
 simplicial set we take $I^{\bullet}$ with $I^{(n)}=G^{n}$ with the standard
 simplicial maps from Example \ref{ex:nerve}. Then $\cU^{(1)}$, $\cU^{(2)}$ and
 $\cU^{(3)}$ are covers of $\bB G_{1}$, $\bB G_{2}$ and $\bB G_{3}$ respectively, which
 are by construction compatible with all simplicial maps. We now define the
 cover of $\bB G_{n}$ inductively from the one of $\bB G_{n-1}$ by setting
 \begin{equation*}
  U^{(n)}_{g_{1},...,g_{n}}=\bigcap_{k=0}^{n}(d_{k}^{n})^{-1}(U^{(n-1)}_{d_{k}^{n}(g_{1},...,g_{n})}).
 \end{equation*}
 Then
 $d_{k}^{n}(U^{(n)}_{g_{1},...,g_{n}})\se U^{(n-1)}_{d_{k}^{n}(g_{1},...,g_{n})}$
 holds by definition and since $d_{k}^{n} \circ s_{k}^{n}=\id$ we have
 $s_{k}^{n}(U^{(n)}_{g_{1},...,g_{n}})\se U^{n+1}_{s_{k}^{n}(g_{1},...,g_{n})}$.

 Plugging in the definitions we obtain immediately
 \begin{equation*}
  \gamma_{j,l}(x)-\gamma_{i,l}(x)+\gamma_{i,j}(x)+
  \per_{\omega}(\eta_{i,j,l}(x))=0
  \quad\text{for}\quad  x\in V_{i}\cap V_{j}\cap V_{l}.
 \end{equation*}
 Since $\alpha_{i,j}\from V_{i,j}\to C^{\infty}_{pw}(\Sigma,G)$ satisfies
 \eqref{eqn:boundary_of_alpha_ij}, it follow from this and
 \eqref{eqn:transgression1} that
 \begin{equation*}
  \per_{\omega}(\Phi_{(i,j),(i',j')}(g,h))=
  F_{i,j}(g,h)-F_{i',j'}(g,h)
  -\gamma_{i,i'}(g)-\gamma_{j,j'}(h)+\gamma_{ij,i'j'}(gh)
 \end{equation*}
 for $ (g,h)\in V_{i,j}\cap V_{i',j'}$ (see
 \eqref{eqn:definition_of_Phi_iji'j'} and Figure \ref{fig:Kugel} for the
 definition of $\Phi_{(i,j),(i',j')}$). Finally, the maps
 $\Theta_{i,j,l}\from V_{i,j,l}\to \pi_{2}(G)$ satisfy
 \begin{equation*}
  \per_{\omega}(\Theta_{i,j,l}(g,h,k))+
  F_{j,l}(h,k)-F_{ij,l}(gh,k)+F_{i,jl}(g,hk)-F_{i,j}(g,h)=0
 \end{equation*}
 by their very definition \eqref{eqn:definition_of_Theta_ijl} and
 $F_{ij}=\per_{\omega}{\circ} \alpha_{ij}$.
 
 In order to obtain a differentiable cocycle, we have to check some further
 properties. In fact, we obtain
 \begin{equation*}
  \eta_{j,k,m}(g)-\eta_{i,l,m}(g)+\eta_{i,j,m}(g)-\eta_{i,j,l}(g)=0
 \end{equation*}
 for $g\in V_{i}\cap V_{j}\cap V_{l}\cap V_{m}$ by plugging in the definition
 of $\eta_{i,j,l}(g)$ from \eqref{eqn:transgression2}. To check the
 compatibility of $\Phi_{(i,j),(i',j')}$ with $\eta_{i,j,l}$ we observe that
 \begin{multline*}
  \Phi_{(i',j'),(i'',j'')}(g,h)
  -\Phi_{(i,j),(i'',j'')}(g,h)+\Phi_{(i,j)(i',j')}(g,h)=\\
  \locsm{\Theta}(i,i^{-1}i',i'^{-1}i'')
  +\locsm{\Theta}(i^{-1}i',i'^{-1}i'',i''^{-1}g)\\
  +\locsm{\Theta}(j,j^{-1}j',j'^{-1}j'')
  +\locsm{\Theta}(j^{-1}j',j'^{-1}j'',j''^{-1}g)\\
  -\locsm{\Theta}(ij,(ij)^{-1}i'j',(i'j')^{-1}i''j'')
  -\locsm{\Theta}((ij)^{-1}i'j',(i'j')^{-1}i''j'',(i''j'')^{-1}g)=\\
  -\eta_{i,i',i''}(g)-\eta_{j,j',j''}(h)+\eta_{ij,i'j',i''j''}(gh),
 \end{multline*}
 where the first equality follows from the definition of $\Phi_{(i,j),(i',j')}$
 in \eqref{eqn:definition_of_Phi_iji'j'} and of $\locsm{\Theta}$ in
 \eqref{eqn:definition_of_theta_tilde} (note that $G$ acts trivially on
 $\pi_{2}(G)$) and the second equality follows from \eqref{eqn:transgression2}
 and the cocycle identity for $\locsm{\Theta}$. The compatibility of
 $\Theta_{i,j,l}$ and $\Phi_{(i,j),(i',j')}$ in turn reads
 \begin{multline*}
  \Theta_{i',j',l'}(g,h,k)-\Theta_{i,j,l}(g,h,k)=\\
  g.\Phi_{(j,l),(j',l')}(h,k)- \Phi_{(ij,l),(i'j',l')}(gh,k)
  + \Phi_{(i,jl)(i',j'l')}(g,hk)- \Phi_{(i,j),(i,j')}(g,h),
 \end{multline*}
 which follows by plugging the definitions of $\Theta_{i,j,l}$ from
 \eqref{eqn:definition_of_Theta_ijl} and of $\Phi_{(i,j),(i',j')}$ from
 \eqref{eqn:definition_of_Phi_iji'j'}. Finally, we have to check that
 $\Theta_{i,j,l}$ is closed with respect to $\dsimp$
 \begin{equation*}
  g.\Theta_{j,l,m}(h,k,o)
  -\Theta_{ij,l,m}(gh,k,o)
  +\Theta_{i,jl,m}(g,hk,o)
  -\Theta_{i,j,lm}(g,h,ko)
  +\Theta_{i,j,l}(g,h,k)=0,
 \end{equation*}
 which follows from the simple fact that $\Theta_{i,j,k}$ is already the image
 of $\alpha_{i,j}$ under $\dsimp$.
\end{proof}

\subsection{Dependency on choices}
In this section we shall briefly discuss the dependence of the construction
from the previous section on the various choices that we made. If we first fix the
Lie algebra 2-cocycle $\omega$, then we are left with
\begin{enumerate}
       \renewcommand{\labelenumi}{(1.\alph{enumi})}
 \item the choice of $\aone{} \from G \to C^{\infty}(\Delta^{(1)},G)$
 \item the choice of $\atwo{}\from G\times G\to C^{\infty}( \Delta^{(2)},G)$
 \item the choice of the identity neighborhoods $U,V$
 \item the choice of the open cover $(V_{i,j})_{(i,j)\in G\times G}$ of
       $G\times G$
 \item the choice of
       $\alpha_{i,j}\from V_{i,j}\to C^{\infty}_{pw}(\Sigma,G)$.
\end{enumerate}
Those choices were made in a way such that they satisfy
\begin{enumerate}\renewcommand{\labelenumi}{(2.\alph{enumi})}
 \item $\aone{}$ is smooth on $U$ and $\atwo{}$ is smooth on $U\times U$
 \item $\psing\atwo{g,h}=\aone{g}+g.\aone{h}-\aone{gh}$
 \item $V=V^{-1}$ and $V^{2}\se U$
 \item $\pr_{2}(V_{i,j})\se j\cdot V$, $V_{i,j}\cdot V_{i,j}\se (ij)\cdot V$,
       $\pr_{1}(V_{i,j})\se i\cdot V$ and $(i,j)\in V_{i,j}$
 \item $\alpha_{i,j}\from V_{i,j}\to C^{\infty}_{pw}(\Sigma,G)$ is smooth
       and satisfies \eqref{eqn:boundary_of_alpha_ij}, i.e.
       \begin{equation*}
        \psing\alpha_{i,j}(g,h)=
        \aone{i}+i.\aone{i^{-1}g}+g.\aone{j}+gj.\aone{j^{-1}h}-
        ij.\aone{(ij)^{-1}gh}-\aone{ij}.
       \end{equation*}
\end{enumerate}
Moreover, we constructed $\aone{}$ and $\atwo{}$ on an identity neighborhood with the aid of a chart $\varphi$ and
$V_{i,j}$ with the aid of open identity neighborhoods $W_{j}\se G$ for each $j\in G$.

\begin{remark}
 All the remaining data of the differentiable cocycle
 $(\gamma_{i,j},\eta_{i,j,l},F_{i,j},-\Phi_{(i,j),(i',j')},\Theta_{i,j,l})$,
 including the cover of $\bB G_{\bullet}$ were constructed from these choices
 (cf.\ \eqref{eqn:transgression1}, \eqref{eqn:transgression2},
 \eqref{eqn:definition_of_F_ij}, \eqref{eqn:definition_of_Phi_iji'j'},
 \eqref{eqn:definition_of_Theta_ijl} and \eqref{eqn:definition_of_V_ijl}). If
 we have another collection
 \begin{equation*}
  \aone{}',\atwo{}',U',V',V'_{i,j}\text{ and }\alpha_{i,j}'
 \end{equation*}
 of the data (1.a)-(1.e) satisfying the conditions (2.a)-(2.e), then we obtain
 another differentiable cocycle
 $(\gamma_{i,j}',\eta_{i,j,l}',F_{i,j}',\Phi_{(i,j),(i',j')}',\Theta_{i,j,l}')$
 by the aforementioned construction. We will argue now that these two
 differentiable cocycles differ by a coboundary. In particular, the choices of
 the chart $\varphi\from P\to \varphi(P)\se\fg$ and of the $W_{j}$ are also
 inessential. The coboundary can be constructed from the given data as follows.
 
 Now $\psing (\aone{g}-\aoneprime{g})=0$ (interpreting $\aone{g}-\aoneprime{g}$
 as a singular 1-chain) and since $G$ is assumed to be simply connected, there
 exists for each $g\in G$ a map $\Aone{g}\in C(\Delta^{(2)},G)$ such that
 \begin{equation}\label{eqn:definition_of_A_tilde}
  \psing\Aone{g}=\aone{g}-\aoneprime{g}.
 \end{equation}
 Moreover, we may assume without loss of generality that
 $\Aone{g}\in C^{\infty}(\Delta^{(2)},G)$ and that $g\mapsto \Aone{g}$ is
 smooth on some identity neighborhood $U''\se U\cap U'$. Let $V''\se U''$ be
 open with $e\in V''={V''}^{-1}\se V\cap V'$. With this, we set
 \begin{equation*}
  \xi_{i}\from V''_{i}\to \fz,\quad
  \quad g\mapsto \int_{\Aone{i}}\omega^{l}+\int_{i.\Aone{i^{-1}g}}\omega^{l}\\
 \end{equation*}
 with $V''_{i}:=i\cdot V''$ for $i\in G$,
 \begin{align*}
  \rho_{i,j} \from V''_{i}\cap V''_{j}\to \pi_{2}(G),
  \quad g\mapsto & \atwo{i,i^{-1}g}-i.\atwo{i^{-1}j,j^{-1}g}-
  (\atwoprime{i,i^{-1}g}-i.\atwoprime{i^{-1}j,j^{-1}g})\\
  & -(\Aone{i}+i.\Aone{i^{-1}g})
  +\Aone{j}+j.\Aone{j^{-1}g}
 \end{align*}
 and
 \begin{align*}
  \sigma_{i,j}\from V_{i,j}''\to \pi_{2}(G),
  \quad (g,h)\mapsto&
  \alpha_{i,j}(g,h)-\alpha'_{i,j}(g,h)-(\Aone{i}+i.\Aone{i^{-1}})\\
  &-(\Aone{j}+j.\Aone{j^{-1}g})+\Aone{ij}+ij.\Aone{(ij)^{-1}gh}
 \end{align*}
 with
 \begin{equation*}
  V''_{i,j}:=\{(g,h)\in V_{i,j} \cap V'_{i,j}\mid g\in i\cdot V'',h\in j\cdot V'',gh\in ij\cdot V''\}
 \end{equation*}
 for $(i,j)\in G\times G$. One readily checks with
 \eqref{eqn:definition_of_alpha_and_beta_tilde},
 \eqref{eqn:definition_of_A_tilde} and \eqref{eqn:boundary_of_alpha_ij} that
 $\rho_{i,j}(g)$ and $\sigma_{i,j}(g,h)$ are in fact closed singular 2-chains
 on $G$ and thus define elements of $\pi_{2}(G)$. Moreover, it follows from the
 smoothness assumptions on $\atwo{}$, $\atwoprime{}$, $\Aone{}$, $\alpha_{i,j}$
 and $\alpha'_{i,j}$ that $\xi_{i}$, $\rho_{i,j}$ and $\sigma_{i,j}$ define
 smooth maps. Now, a lengthy but straight-forward calculation shows that
 \begin{align*} 
  \check{\delta}(\xi)+\per_{\omega}\circ\rho&=\gamma-\gamma'\notag\\
  \dsimp(\xi)+\per_{\omega}\circ\sigma&=F-F'\notag\\
  \check{\delta}(\rho)&=\eta-\eta'\notag\\
  \dsimp(\rho)+\check{\delta}(\sigma)&=-(\Phi-\Phi')\\
  \dsimp(\sigma)&=\Theta-\Theta'\notag
 \end{align*}
 holds on the refinement
 \begin{equation*}
  \cdots \rrrrarrow (V_{i,j,l}'')_{(i,j,l)\in G\times G\times G} \rrrarrow (V_{i,j}'')_{(i,j)\in G\times G}\rrarrow (V''_{i})_{i\in G},
 \end{equation*}
 where $V''_{i,j,l}$ is constructed from $V_{i,j}''$ as in
 \eqref{eqn:definition_of_V_ijl}.
\end{remark}

\begin{tabsection}
 Now let us fix all the data in (1.a)-(1.e), and take $\omega$ and $\omega'$
 representing the same class in $H^2(\g, \z)$, i.e.,
 $\omega(x,y)-\omega'(x,y)=b([x,y])$ for $b\from \fg\to \fz$ continuous and
 linear. This then results in two different differentiable cocycles
 $\phi=(\gamma_{i,j},\eta_{i,j,l},F_{i,j},\Phi_{(i,j),(i',j')},\Theta_{i,j,l})$
 and
 $\phi'=(\gamma'_{i,j},\eta'_{i,j,l},F'_{i,j},\Phi'_{(i,j),(i',j')},\Theta'_{i,j,l})$.
 To see that they are equivalent we define
 \begin{equation*}
  \xi_{i}\from i\cdot V\to \fz,\quad g\mapsto\int_{\aone{i}}b^{l}+\int_{i.\aone{i^{-1}g}}b^{l},
 \end{equation*}
 where $b^{l}$ is the left-invariant 1-from on $G$ with $b^{l}(e)=b$. Moreover,
 we set
 \begin{equation*}
  \rho_{i,j}\from i\cdot V\cap j\cdot V\to \pi_{2}(G),\quad g\mapsto -\locsm{\Theta}(i,i^{-1}j,j^{-1}g)
 \end{equation*}
 and $\sigma=0$. Then a straight-forward computation shows that $\phi$ and
 $\phi'$ differ by the coboundary $D_3(\rho, \xi, \sigma)$. In summary, we thus
 have the following
\end{tabsection}

\begin{Proposition}
 The class in $\check{H}^{3}(\bB G_{\bullet},A\xrightarrow{\xmodmap}B)$ of the
 differentiable cocycle
 \begin{equation*}
  (\gamma_{i,j},\eta_{i,j,l},F_{i,j},\Phi_{(i,j),(i',j')},\Theta_{i,j,l})
 \end{equation*}
 in $\check{H}^{3}(\bB G_{\bullet},A\xrightarrow{\xmodmap}B)$ from Lemma
 \ref{rem:subsuming_the_differentiable_cocycle} does not depend on the choices
 that we made throughout Section \ref{sec:differentiable_cocycles}.
\end{Proposition}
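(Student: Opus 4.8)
The plan is to split the independence statement into two separate reductions and in each case to exhibit the difference of the two differentiable cocycles as an explicit $D_3$-coboundary. Since any change of the total data factors as a change of the geometric choices (1.a)--(1.e) with $\omega$ held fixed, followed by a change of the representative of the fixed class $[\omega]\in H^2(\fg,\fz)$ with the geometric data held fixed, combining the two reductions yields the claim. Throughout I work with normalized cochains, which is justified by Corollary \ref{cor:normalization}.

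For the first reduction, suppose $(\aone{},\atwo{},U,V,(V_{i,j}),\alpha_{i,j})$ and $(\aoneprime{},\atwoprime{},U',V',(V'_{i,j}),\alpha'_{i,j})$ both satisfy (2.a)--(2.e) for the same $\omega$. The decisive input is that $G$ is $1$-connected, so for every $g$ there is a $2$-simplex $\Aone{g}\in C^\infty(\Delta^{(2)},G)$, smooth on an identity neighbourhood, with $\partial\Aone{g}=\aone{g}-\aoneprime{g}$ as in \eqref{eqn:definition_of_A_tilde}. I would then assemble the normalized $2$-cochain $\psi=(\xi,\rho,\sigma)$ out of $\Aone{}$ together with the differences $\atwo{}-\atwoprime{}$ and $\alpha_{i,j}-\alpha'_{i,j}$, applying $\per_{\omega}=\int(\cdot)\,\omega^{l}$ exactly on the $\fz$-valued slot $\xi=\psi^{1,0,1}$, while keeping $\rho=\psi^{1,1,0}$ and $\sigma=\psi^{2,0,0}$ valued in $\pi_2(G)\cong H_2(G)$. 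The substance of the step is the verification of the five coboundary identities (the analogues of \eqref{101-110}, \eqref{101-200}, \eqref{110}, \eqref{110-200} and \eqref{200} with $\per_{\omega}$ in the role of $\xmodmap_*$), carried out on the common refinement $(V''_{i,j,l})$ built from a $V''$ with $V''=(V'')^{-1}\se V\cap V'$, the sets $V''_{i,j}$ and $V''_{i,j,l}$ being formed as in \eqref{eqn:definition_of_V_ij} and \eqref{eqn:definition_of_V_ijl}.

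For the second reduction, I would fix all geometric data and take $\omega,\omega'$ with $\omega-\omega'=b\circ[\cdot,\cdot]$ for a continuous linear $b\from\fg\to\fz$. Now the two cocycles differ only through their $\fz$-valued components, and the coboundary is governed by the left-invariant $1$-form $b^l$: I would set $\xi_i(g)=\int_{\aone{i}}b^{l}+\int_{i.\aone{i^{-1}g}}b^{l}$, take $\rho_{i,j}(g)=-\locsm{\Theta}(i,i^{-1}j,j^{-1}g)$ and $\sigma=0$, and a direct computation then shows $\phi-\phi'=D_3(\rho,\xi,\sigma)$, using that the integral of $b^l$ over the boundary of a $2$-simplex equals the integral of its exterior derivative over the simplex. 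Composing the two steps shows that the class depends on none of the choices and only on $[\omega]$.

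The hard part is the verification of the mixed-degree equations $\check{\delta}(\xi)+\per_{\omega}\circ\rho=\gamma-\gamma'$ and $\dsimp(\xi)+\per_{\omega}\circ\sigma=F-F'$ in the first reduction. Here one must feed the geometric boundary identity \eqref{eqn:boundary_of_alpha_ij} for $\alpha_{i,j}$ and the defining relation \eqref{eqn:definition_of_alpha_and_beta_tilde} for $\atwo{}$ through the period map, exploiting that $\per_{\omega}$ is literally integration of $\omega^l$ and that $\omega^l$ is left invariant, so that the telescoping boundary relations satisfied by $\Aone{}$ are converted into the additive $\fz$-valued relations demanded of $\xi$. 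The purely $\pi_2(G)$-valued identities $\check{\delta}(\rho)=\eta-\eta'$, $\dsimp(\rho)+\check{\delta}(\sigma)=-(\Phi-\Phi')$ and $\dsimp(\sigma)=\Theta-\Theta'$ then reduce to bookkeeping with the same boundary formulas, using that $G$ acts trivially on $\pi_2(G)$ and that $H_2(G)\cong\pi_2(G)$ detects closed $2$-chains. The only remaining care is to arrange that $\xi$, $\rho$ and $\sigma$ are simultaneously smooth and live on a single simplicial refinement, which is precisely what the passage to $V''$ and the induced covers $V''_{i,j}$, $V''_{i,j,l}$ guarantees.
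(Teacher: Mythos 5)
Your proposal is correct and follows essentially the same route as the paper: the same two-step factorization (first varying the geometric data (1.a)--(1.e) for fixed $\omega$ via a $2$-chain $\Aone{g}$ with $\partial\Aone{g}=\aone{g}-\aoneprime{g}$ supplied by $1$-connectedness, then varying the representative of $[\omega]$ via the left-invariant $1$-form $b^{l}$), with the same explicit cochains $\xi$, $\rho$, $\sigma$ on the refinement $V''$ and the identical formulas $\xi_{i}(g)=\int_{\aone{i}}b^{l}+\int_{i.\aone{i^{-1}g}}b^{l}$, $\rho_{i,j}(g)=-\locsm{\Theta}(i,i^{-1}j,j^{-1}g)$, $\sigma=0$ in the second step. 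Nothing essential is missing relative to the paper's argument, which likewise leaves the five coboundary identities as a ``lengthy but straight-forward calculation.''
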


\subsection{A bundle-theoretic interpretation of the main construction}
\label{sect:interpretation_of_the_construction_i}

If $\locsm{f}\from G\times G\to Z$ is a locally smooth group cocycle, then we obtain a central
extension of $G$ by $Z$ as follows. We endow the set $Z\times G$ with the group
structure $\wh{\mu}_{\locsm{f}}((x,g),(y,h))=(x+y+\locsm{f}(g,h),gh)$ and
denote the resulting group by $Z\times_{\locsm{f}}G$ or shortly
$\wh{G}_{\locsm{f}}$. This turns
\begin{equation}\label{eqn:cent_ext1}
 Z\to \wh{G}_{\locsm{f}}\to G
\end{equation}
with the canonical maps into a central extension of groups. Let $\locsm{f}$ be
smooth on $U\times U$ for $U\se G$ an open identity neighborhood and $V\se U$
be open such that $e\in V$, $V=V^{-1}$ and $V\cdot V\se U$. Since $V$ is open
in $G$, $Z\times V$ generates $\wh{G}_{\locsm{f}}$ and since $Z\times U$
carries a natural manifold structure, Theorem
\ref{thm:globalisation-of-smooth-structures-on-groups} yields a Lie group
structure on $\wh{G}_{\locsm{f}}$. Clearly, \eqref{eqn:cent_ext1} is then an
exact sequence of Lie groups an since we have the smooth section
$U\ni x\mapsto (0,x)\in Z\times U$ it is a locally trivial principal bundle.

\begin{lemma}\label{lem:transgression_for_honest_cocycles}\cite[Proposition
 2.3]{Wockel09Non-integral-central-extensions-of-loop-groups} For $i\in G$ we
 set $V_{i}:=i\cdot V$. Then the assignment
 \begin{equation*}
  (\tau \locsm{f})_{i,j}\from V_{i}\cap V_{j}\to Z,\quad g\mapsto \locsm{f}(i,i^{-1}g)-\locsm{f}(j,j^{-1}g)
 \end{equation*}
 defines a smooth \v{C}ech cocycle on the open cover $(V_{i})_{i\in G}$ of $G$.
 The locally trivial principal bundle \eqref{eqn:cent_ext1}  has this cocycle
 as classifying cocycle.
\end{lemma}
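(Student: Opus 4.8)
The plan is to produce smooth local sections of the central extension \eqref{eqn:cent_ext1} over each $V_{i}$ and to read off $(\tau\locsm{f})_{i,j}$ as the associated transition functions. This handles all three assertions at once: the smoothness of $(\tau\locsm{f})_{i,j}$, the \v{C}ech cocycle identity, and the claim that $(\tau\locsm{f})$ is a classifying cocycle for \eqref{eqn:cent_ext1}.

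First I would fix, for each index $i\in G$, the section
\begin{equation*}
 s_{i}\from V_{i}\to \wh{G}_{\locsm{f}},\quad g\mapsto \wh{\mu}_{\locsm{f}}\big((0,i),(0,i^{-1}g)\big).
\end{equation*}
For $g\in V_{i}=i\cdot V$ we have $i^{-1}g\in V\se U$, so $g\mapsto (0,i^{-1}g)$ is the composition of the smooth left translation $V_{i}\to V$, $g\mapsto i^{-1}g$, with the smooth local section $U\ni x\mapsto (0,x)\in Z\times U$; and left translation by the fixed group element $(0,i)$ is a diffeomorphism of the Lie group $\wh{G}_{\locsm{f}}$ (whose smooth structure is the one provided by Theorem \ref{thm:globalisation-of-smooth-structures-on-groups}). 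Hence $s_{i}$ is smooth, and a direct evaluation of $\wh{\mu}_{\locsm{f}}$ gives $s_{i}(g)=(\locsm{f}(i,i^{-1}g),g)$, which is indeed a section of \eqref{eqn:cent_ext1}.

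The central subgroup $Z=Z\times\{e\}$ acts on the right by $(y,g)\cdot x=(y+x,g)$, using the normalization $\locsm{f}(g,e)=0$. With the convention $s_{i}=s_{j}\cdot z_{i,j}$, the transition functions of the smooth trivializations $(s_{i})$ are then computed, in the fiber over $g\in V_{i}\cap V_{j}$, as
\begin{equation*}
 z_{i,j}(g)=\locsm{f}(i,i^{-1}g)-\locsm{f}(j,j^{-1}g)=(\tau\locsm{f})_{i,j}(g).
\end{equation*}
Being transition functions of smooth local trivializations of a smooth principal $Z$-bundle, the $(\tau\locsm{f})_{i,j}$ are automatically smooth and automatically satisfy the \v{C}ech cocycle identity $(\tau\locsm{f})_{j,l}-(\tau\locsm{f})_{i,l}+(\tau\locsm{f})_{i,j}=0$ on $V_{i}\cap V_{j}\cap V_{l}$ (which one also verifies by direct telescoping of the defining formula), and by construction they constitute a classifying cocycle for \eqref{eqn:cent_ext1}.

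The only genuinely delicate point — and the main obstacle — is the smoothness of $(\tau\locsm{f})_{i,j}$, since $\locsm{f}$ is smooth only on $U\times U$ whereas the first arguments $i,j$ range over all of $G$. The section argument above settles this conceptually, but I would also make the cancellation explicit as a cross-check. Applying the cocycle identity $\dd\locsm{f}(i,i^{-1}j,j^{-1}g)=0$, namely
\begin{equation*}
 \locsm{f}(i^{-1}j,j^{-1}g)-\locsm{f}(j,j^{-1}g)+\locsm{f}(i,i^{-1}g)-\locsm{f}(i,i^{-1}j)=0,
\end{equation*}
rewrites the transition function as
\begin{equation*}
 (\tau\locsm{f})_{i,j}(g)=\locsm{f}(i,i^{-1}j)-\locsm{f}(i^{-1}j,j^{-1}g).
\end{equation*}
Here the first term is constant in $g$, while in the second term $j^{-1}g\in V\se U$ and $i^{-1}j=(i^{-1}g)(j^{-1}g)^{-1}\in V\cdot V^{-1}=V^{2}\se U$ whenever $V_{i}\cap V_{j}\neq\emptyset$; thus $g\mapsto (i^{-1}j,j^{-1}g)$ lands in $U\times U$, where $\locsm{f}$ is smooth, which proves the smoothness directly. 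This is exactly the mechanism already used for $\gamma_{i,j}$ in Section \ref{sec:differentiable_cocycles}, and it is the heart of the argument.
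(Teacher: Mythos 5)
Your argument is correct, and it is essentially the argument the paper relies on: the paper itself gives no internal proof (it cites \cite[Proposition 2.3]{Wockel09Non-integral-central-extensions-of-loop-groups}), but the remark immediately following the lemma exhibits exactly your sections in the form of the local trivializations $(x,g)\mapsto (x+\locsm{f}(i,i^{-1}g),g)$, and your explicit smoothness mechanism --- rewriting $\locsm{f}(i,i^{-1}g)-\locsm{f}(j,j^{-1}g)$ via the cocycle identity as $\locsm{f}(i,i^{-1}j)-\locsm{f}(i^{-1}j,j^{-1}g)$ with $(i^{-1}j,j^{-1}g)\in U\times U$ --- is precisely the one the paper deploys for $\gamma_{i,j}$ in Section \ref{sec:differentiable_cocycles}. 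The only point worth flagging is cosmetic: your right-action computation uses the normalization $\locsm{f}(g,e)=0$, which the paper does assume throughout, so nothing is missing.
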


It will be important for an understanding of the construction in this paper to
have a coordinate representation of the multiplication map
$\wh{\mu}_{\locsm{f}}\from \wh{G}_{\locsm{f}}\times \wh{G}_{\locsm{f}} \to \wh{G}_{\locsm{f}}$
in terms of these bundle coordinates.

\begin{remark}\label{rem:a_bundle_theoretic_interpretation}
 The multiplication map makes the diagram
 \begin{equation*}
  \xymatrix{
  Z\times Z\ar[d] \ar[r]^(.55){+}&Z\ar[d]\\
  \wh{G}_{\locsm{f}}\times\wh{G}_{\locsm{f}}\ar[d]\ar[r]^(.55){\wh{\mu}_{\locsm{f}}}&\wh{G}_{\locsm{f}}\ar[d]\\
  G\times G\ar[r]^(.55){\mu}&G}
 \end{equation*}
 commute, so we obtain a description of $\wh{\mu}_{\locsm{f}}$ on the open
 cover ${W}_{i,j}:=\{(g,h)\in V_{i}\times V_{j}:gh\in V_{ij}\}$. Since the
 local trivializations of the bundle $\wh{G}_{\wt{f}}$ are given by
 \begin{equation*}
  Z\times V_{i} \to \wh{G}_{\locsm{f}},\quad
  (x,g)\mapsto (x+\locsm{f}(i,i^{-1}g),g)
 \end{equation*}
 (cf.\ \cite[Proposition
 2.3]{Wockel09Non-integral-central-extensions-of-loop-groups}), one checks
 directly that the map
 \begin{equation}\label{eqn:coordinate_representation_of_the_multiplication}
  {f}_{i,j}\from {W}_{i,j}\to Z,\quad (g,h)\mapsto f_{i,j}(g,h):=\locsm{f}(i,i^{-1}g)+\locsm{f}(j,j^{-1}h)+\locsm{f}(g,h)-\locsm{f}(ij,(ij)^{-1}gh)
 \end{equation}
 is the coordinate representation of (the $Z$-component of) the multiplication
 map $\wh{\mu}_{\locsm{f}}$. We obtain in this way a morphism
 \begin{equation*}
  \op{Ext}(G,Z)\cong H^{2}_{\op{loc}}(G,Z)\to \check{H}^{2}(\bB G_{\bullet},0\to Z).
 \end{equation*}
 This is an isomorphism, since each class $\phi=[(\gamma_{i,j},F_{i})]$ in
 $\check{H}^{2}(\bB G_{\bullet},0\to Z)$ determines a bundle via its underlying
 \v{C}ech cocycle $\gamma_{i,j}$ and a group structure thereon by the smooth
 maps $F_{i}$. These assignments are clearly inverse to each other.
\end{remark}

\begin{tabsection}
 From this description it does not follow that $f_{i,j}$ actually is smooth, we
 only know it because we can put in a whole lot of bundle theory (yielding this
 expression of the coordinate representation) and group theory (yielding the
 smoothness of the group multiplication in Theorem
 \ref{thm:globalisation-of-smooth-structures-on-groups}).
 
 The crucial point of the construction in Section
 \ref{sec:differentiable_cocycles} was that there is an alternative expression
 for $f_{i,j}$ in the case that $f=q \circ \wt{F}$ is the locally smooth cocycle
 constructed in the case of discrete periods from a Lie algebra cocycle (cf.\
 Proposition \ref{prop:discrete-periods}). Indeed, the expression
 $\int_{\ol{\alpha}_{i,j}}\omega^{l}$ with
 \begin{equation}
  \ol{\alpha}_{i,j}(g,h)=	\wt{\beta}(i,i^{-1}g)+g.\wt{\beta}(j,j^{-1}h)+\wt{\beta}(g,h)-
  \wt{\beta}(ij,(ij)^{-1}gh)
 \end{equation}
 coincides on $V_{i,j}$ with $\int_{\alpha_{i,j}}$ since it follows immediately
 from \eqref{eqn:definition_of_alpha_and_beta_tilde} and
 \eqref{eqn:boundary_of_alpha_ij} that the difference
 $\alpha_{i,j}(g,h)-\ol{\alpha}_{i,j}(g,h)$ is closed and thus
 \begin{equation*}
  \int_{\alpha_{i,j}(g,h)-\ol{\alpha}_{i,j}(g,h)}\omega^{l}\in \Per{\omega}.
 \end{equation*}
 While the smoothness of $f_{i,j}=\int_{\ol{\alpha}_{i,j}}\omega^{l}$ is not
 immediate from its construction, the smoothness of
 $\int_{{\alpha}_{i,j}}\omega^{l}$ is so. This made the construction of the
 differentiable cocycle in Section \ref{sec:differentiable_cocycles} work.
\end{tabsection}

\section{Lie's Third Theorem} \label{sec:lie-third}

\subsection{Deriving Lie algebras from \'etale Lie 2-groups}

We now explain how to associate a Lie algebra to an \'etale \tgp. Recall from Definition \ref{def:Lie_2-group} that a Lie 2-group is a (weak) group object in the bicategory of smooth stacks (cf.\ also Appendix \ref{sec:grothendieck-pretop}).

\begin{definition}
 A Lie groupoid is \'etale if all its structure maps are local diffeomorphisms.
 We call a \tgp for which the underlying Lie groupoid is \'etale an
 \emph{\'etale \tgp.} We denote the full subcategory of \'etale \tgps in
 $\cat{\tgps}$ by $\cat{\tgps_{\acute{e}t}}$.
\end{definition}

The next short lemma is the key fact about \'etale Lie groupoids that will make the
construction in the sequel work.

\begin{lemma}\label{lem:etalness_forces_natural_transformation=id}
 Suppose that $\varphi\from \cG\to\cH$ is a smooth functor between the Lie
 groupoids $\cG$ and $\cH$ and that $\alpha\from G_{0}\to H_{1}$ is a smooth
 natural transformation with source $\varphi$. If $\cH$ is \'etale and
 $\alpha(x)=\bid(\varphi_{0}(x))$ for some $x\in G_{0}$, then
 $\alpha=\bid \circ \varphi_{0}$ on a whole neighborhood of $x$.
\end{lemma}

\begin{proof}
 Let $V\se H_{0}$ be open with $\varphi_{0}(x)\in V$ such that
 $\left.\bid\right|_{V}$ is a diffeomorphism. Since
 $\bs \circ \bid = \id_{G_{0}}$ we have that $\bs$ is the (two-sided) inverse
 of $\bid$ on $\bid(V)$, and thus it is in particular a right-sided inverse.
 For $y\in \varphi_{0}^{-1}(V)$ it thus follows from
 $\bs(\alpha(y))=\varphi_{0}(y)$ that
 $\alpha(y)= \bid(\bs(\alpha(y)))=\bid(\varphi_{0}(y))$.
\end{proof}

\begin{tabsection}
 The construction of the Lie algebra associated to the \'etale Lie 2-group
 $\cG$ is along the lines of \cite[Section
 5]{TsengZhu06Integrating-Lie-algebroids-via-stacks}. The multiplication
 morphism $m\from\cG\times\cG\to\cG$ (not to be confused with the composition
 in $\cG$) is a generalized morphism (see Appendix
 \ref{sec:grothendieck-pretop}) is represented by another Lie groupoid $\cH$,
 a weak equivalence $\cG\times \cG\xleftarrow{\Phi} \cH$ and a smooth
 functor $\cH\xrightarrow{m} \cG$. Thus there exists an open neighborhood $U$
 of $e\in G_{0}$ (where $e=u_{0}(*)$) and a smooth section
 \begin{equation*}
  \sigma\from U\times U\to {H_{0}}_{\;\Phi_{0}\!\!} \times_{s_{\cG}\times s_{\cG}}(G_{1}\times G_{1})
 \end{equation*}
 of $(t_{\cG}\times t_{\cG}) {\circ} \pr_{2}$. If $\sigma_{1}$ denotes its
 first component, then $m_{0}\from U\times U\to G_{0}$,
 $(x,y)\mapsto m_{0}(\sigma_{1}(x,y))$ is smooth and represents the restriction
 of the multiplication in $\cG$, restricted to the full sub groupoid
 $\cU:=(s_{\cG}^{-1}(U)\cap t_{\cG}^{-1}(U) \rrarrow U)$. Note that
 $m_{1}\from G_{1}\times_{U}G_{1}\to G_{1}$ is uniquely determined by its
 property to be a smooth functor and the universal property of the pull-back in
 the definition of weak equivalence.
 
 Now consider the 2-morphism
 $a\from m \circ (m \times \id)\Rightarrow m {\circ} (\id\times m)$, which is
 also represented by another Lie groupoid $\cK$, a weak equivalence
 $\cG\times \cG\times \cG\xleftarrow{\Psi} \cK$ and a smooth natural
 transformation $a\from K_{0}\to G_{1}$ between the induced smooth functors
 $m {\circ} (m \times \id)\from \cK\to\cG$ and
 $ m {\circ} (\id\times m)\from \cK\to\cG$. Inside $U$ we take an open
 neighborhood $V$ of $e$ such that $m_{0}(V\times V)\se U$ and that there
 exists a section
 \begin{equation*}
  \sigma'\from V\times V\times V\to {K_{0}}_{_{\;\Psi_{0}\!\!}}
  \times_{s_{\cG}\times s_{\cG}\times s_{\cG}}(G_{1}\times G_{1}\times G_{1})
 \end{equation*}
 of $(t_{\cG}\times t_{\cG}\times t_{\cG}) {\circ} \pr_{2}$. Then
 $\alpha\from V\times V\times V\to G_{1}$,
 $(x,y,z)\mapsto a(\sigma_{1}(x,y,z))$ defines a smooth natural transformation
 between $m {\circ} (m \times \id)$ and $ m {\circ} (\id\times m)$.
 
 Now in general $m(e,e)\neq e$, but we can re-define $m$ to achieve equality
 here. To this end consider the 2-morphism
 $\ell\from m  \circ (u\times\id)\Rightarrow \id$, which is represented by
 another Lie groupoid $\cL$, a weak equivalence $\cG\xleftarrow{\Xi} \cL$
 and a smooth natural transformation $\ell\from L_{0}\to G_{1}$ between the
 induced smooth functors ${m \circ (u\times\id)}\from \cL\to\cG$ and
 ${\id}\from \cL\to\cG$. After possibly shrinking $U$ we may assume that there
 exists a section
 $\sigma''\from U\to {L_{0}}_{_{\;\Xi_{0}\!\!}}\times_{s_{\cG}}G_{1}$ of
 $t_{\cG}\circ\pr_{2}$. Then $\lambda\from U\to G_{1}$,
 $x\mapsto \ell(\sigma''(x))$ defines a smooth natural transformation
 between $m \circ(u \times\id)$ and $\id$. With the same procedure we derive
 $\rho$ from $r\from m  \circ (\id\times u)\Rightarrow \id$. Now there exists a
 neighborhood $U'$ of $m(e,e)$ and a section $\ol{\sigma}\from U'\to G_{1}$ of
 $s_{\cG}$ with $\ol{\sigma}(m(e,e))=\lambda(e)$. We may assume that
 $m(U\times U)\se U'$ and thus define a smooth natural transformation
 $U\times U\to G_{1}$ with source $m$ by $(x,y)\mapsto \ol{\sigma}(m(x,y))$. We
 now re-define $m$ as the target of this natural transformation. Since
 $t_{\cG}(\ol{\sigma}(m(e,e)))=e$ we thus have that $m(e,e)=e$ holds for the
 re-defined $m$. If we also use this natural transformation to re-define the
 other structure morphisms of $\cG$, then this endows $\cU$ with the structure
 of a ``local \tgp'', where ``local'' means that all morphisms and 2-morphisms
 defining the group structure are only defined on the full subgroupoid of some
 neighborhood of $e$.
 
 Since $m(e,e)=e$ holds we have that $\lambda(e)=\rho(e)$ is the identity in
 $e$ and then also $\alpha(e,e,e)$ due to the coherence of $\ell,r$ and
 $\alpha$. Now Lemma \ref{lem:etalness_forces_natural_transformation=id}
 implies that $\alpha(x,y,z)$ is the identity of $m(x,m(y,z))=m(m(x,y),z)$ on
 some neighborhood of $(e,e,e)$, which we may still assume to be $V$. Thus
 $(U,V,m,e)$ is a local Lie group in the sense of \cite[Definition
 II.10]{Neeb06Towards-a-Lie-theory-of-locally-convex-groups} (the requirement
 on the existence of inverses follows from requiring $\pr_{1}\times m$ to be
 invertible with a similar argument as above).
 
 We now have to take care about the choices that we made above. Different
 choices will lead to a priori different local Lie groups $(U,V,m,e)$ and
 $(U',V',m',e')$, and we now argue that they actually agree. We first observe
 that we can achieve $e=e'$ with the same method as above when ensuring
 $m(e,e)=e$. If we construct $m'$ with the aid of a different weak
 equivalence $\cG\times\cG\leftarrow \cH'$, then the functors $m$ and $m'$ are
 smoothly equivalent when restricted to the full subgroupoid
 \begin{equation*}
  \cU \cap \cU':=\big(t^{-1}_{\cG}(U\cap U')\cap s^{-1}_{\cG}(U\cap U')
  \rrarrow U\cap U'\big).
 \end{equation*}
 Since $m(e,e)=m'(e,e)=e$ it follows from the \'etaleness of $\cG$ that this
 smooth natural transformation is actually the identity on some neighborhood of
 $e$. We thus see that $m'=m$ on some neighborhood of $(e,e)$. Thus the germ of
 the local group is uniquely determined by $\cG$, which in turn determines
 uniquely a Lie algebra $L(\cG)$.
 
 Now the same argumentation may also be applied to morphisms to show that the
 assignment $\cG\mapsto L(\cG)$ actually defines a functor
 \begin{equation}\label{eqn:Lie_functor}
  L\from \cat{\tgps_{\acute{e}t}}\to\cat{Lie Algebras},
 \end{equation}
 called the \emph{Lie functor}. It obviously has the property that if we
 pre-compose it with the fully faithful embedding
 $\cat{Lie Groups}\to \cat{\tgps_{\acute{e}t}}$, given by $G\mapsto \disc{G}$
 (see Example \ref{ex:Lie-2-groups}), then it coincides with the ordinary Lie
 functor $L\from \cat{Lie Groups}\to\cat{Lie Algebras}$.
 
 We now observe that the Lie functor is compatible with extensions.
\end{tabsection}

\begin{proposition}
 If $A\xrightarrow{\xmodmap}B$ has discrete $A$, then any central extension
 $[A\xrightarrow{\xmodmap}B]\to \wh{G}\to \disc{G}$ is equivalent to one with
 \'etale $\wh{G}$.
\end{proposition}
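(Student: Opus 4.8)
The plan is to realise the extension by the explicit groupoid model of Section~\ref{sect:from_differentiable_hypercohomology_to_group_stacks} and to read off étaleness from the discreteness of $A$. Starting from a central extension $[A\xrightarrow{\xmodmap}B]\xrightarrow{p}\wh{G}\xrightarrow{q}\disc{G}$, I would first extract a classifying $3$-cocycle. Since $q$ exhibits $\wh{G}$ as a principal $[A\xrightarrow{\xmodmap}B]$-$2$-bundle over $G$, its underlying bundle is classified by a class in $\check{H}^2(G,A\xrightarrow{\xmodmap}B)$ (\cite[Theorem 2.22]{Wockel09Principal-2-bundles-and-their-gauge-2-groups}); choosing a representative $(\gamma_{i,j},\eta_{i,j,l})$ on an open cover $(U_i)_{i\in I}$ of $G$ and extending it to a simplicial cover of $BG_\bullet$ (by the covering lemma of Section~\ref{sect:differentiable_hypercohomology}) identifies the underlying Lie groupoid of $\wh{G}$, up to essential equivalence, with $\Gamma[(\gamma,\eta)]$ from Section~\ref{sec:principal-bundle}. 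Transporting the multiplication $m$, the associator $a$ and the unit $u$ of $\wh{G}$ through this equivalence and recording them in these local coordinates, exactly as in Section~\ref{sect:from_differentiable_hypercohomology_to_group_stacks}, yields the remaining components $F_{i,j}$, $\Phi_{(i,j),(i',j')}$ and $\Theta_{i,j,l}$; the coherence axioms of the $2$-group structure are then precisely the cocycle equations \eqref{eq:120}--\eqref{eq:300}. Thus $\phi:=(\gamma,\eta,F,\Phi,\Theta)$ is a differentiable $3$-cocycle and, by Proposition~\ref{prop:central-extensions-from-differentiable-cocycles} together with the equivalence analysis of Section~\ref{sect:cohomologous_cocycles}, the given extension is equivalent to $\wh{G}_\phi$.

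It then remains to verify that $\wh{G}_\phi$ is étale once $A$ is discrete. This I would check directly on the presenting groupoid $\Gamma[\phi]$, whose object manifold is $\coprod_{i} U_i\times B$ and whose arrow manifold is $\coprod_{i,j}(U_i\cap U_j)\times B\times A$. As $A$ is discrete, every component of the arrow manifold is a copy of $(U_i\cap U_j)\times B$, of the same dimension $\dim G+\dim B$ as the objects, and on each such component the source map $(x_{ij},b,a)\mapsto(x_j,\,b+\xmodmap(a)+\gamma_{ij}(x))$ is the shear $(x,b)\mapsto(x,\,b+\xmodmap(a)+\gamma_{ij}(x))$ with smooth inverse, while the target map $(x_{ij},b,a)\mapsto(x_i,b)$ is an open inclusion; both are therefore local diffeomorphisms, as are the unit and the inverse. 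Conceptually this is just the fact that the groupoid $[A\xrightarrow{\xmodmap}B]=(A\times B\rrarrow B)$ has source $(a,b)\mapsto b$ and target $(a,b)\mapsto\xmodmap(a)b$ which are local diffeomorphisms precisely when $A$ is discrete, and $\Gamma[\phi]$ is locally of this form. Hence $\Gamma[\phi]$ is an étale Lie groupoid; since $\Gamma^2[\phi]$ and $\Gamma^3[\phi]$ are obtained from it by finite products and restriction to refined covers, they are étale as well, so all the structure $1$- and $2$-morphisms of $\wh{G}_\phi$ are generalized morphisms between étale Lie groupoids, i.e.\ live in $\eSt$, and $\wh{G}_\phi$ is an étale \tgp.

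I expect the genuine difficulty to lie in the first step --- reconstructing the \emph{full} $3$-cocycle from an abstractly given extension rather than from the prescribed geometric data of Section~\ref{sec:geo-cocycle}. The delicate point is to choose the local sections of the $2$-bundle $q$ compatibly with the simplicial face maps of $BG_\bullet$, so that the transported multiplication and associator land in the normalised triple complex and satisfy \eqref{eq:120}--\eqref{eq:300} on a genuine (rather than merely hyper-) covering; this is exactly the bookkeeping already arranged in Sections~\ref{sect:differentiable_hypercohomology} and~\ref{sect:from_differentiable_hypercohomology_to_group_stacks}. Once the cocycle is in hand, the étaleness verification above is routine, and the hypothesis that $A$ is discrete enters only there.
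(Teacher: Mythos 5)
Your second step --- the étaleness verification --- is exactly the reason the paper gives: with $A$ discrete, the source map $(x_{ij},b,a)\mapsto (x_j,\,b+\xmodmap(a)+\gamma_{ij}(x))$ and the target map of the groupoid $\cP_{(\gamma,\eta)}$ built from the classifying \v{C}ech $2$-cocycle of the underlying bundle are local diffeomorphisms, just as $A\times B\rrarrow B$ is itself étale. That part is correct.

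The gap is in your first step. You propose to extract a \emph{full} differentiable $3$-cocycle $(\gamma,\eta,F,\Phi,\Theta)$ from the abstractly given extension and then rebuild it as $\wh{G}_\phi$ via Proposition \ref{prop:central-extensions-from-differentiable-cocycles}. But the paper only establishes the map $\check{H}^{3}(BG_{\bullet},A\xrightarrow{\xmodmap}B)\to \op{Ext}(\ul{G},[A\xrightarrow{\xmodmap}B])$ in one direction; nowhere does it prove that every abstract extension is classified by such a cocycle, and the ``bookkeeping already arranged'' in Sections \ref{sect:differentiable_hypercohomology} and \ref{sect:from_differentiable_hypercohomology_to_group_stacks} goes from cocycles to $2$-groups, not back. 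Representing the transported multiplication and associator (which are only generalized morphisms) by honest smooth $B$- and $A$-valued functions on a genuine simplicial cover, compatibly with the face maps and normalization, is the content of a nontrivial classification theorem that you would have to supply. The point you miss is that this detour is unnecessary: by \cite[Theorem 2.22 and Proposition 2.19]{Wockel09Principal-2-bundles-and-their-gauge-2-groups} one only needs the $2$-cocycle $(\gamma,\eta)$ classifying the underlying principal $2$-bundle, which yields an essential equivalence $\cP_{(\gamma,\eta)}\to\wh{G}$ with $\cP_{(\gamma,\eta)}$ étale. Since the $1$- and $2$-morphisms constituting the group structure live in the bicategory $\sSt$ and hence transport along any essential equivalence, $\cP_{(\gamma,\eta)}$ inherits the structure of a \tgp and of an equivalent central extension directly, with no need to strictify anything into cocycle form. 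So your conclusion stands, but the route through the $3$-cocycle is both unjustified by what is available and avoidable.
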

 
\begin{proof}
 The Lie groupoid underlying the Lie 2-groups
 $A\times B\rrarrow B$ is \'etale, and so is the total space of the principal 2-bundle
 $\cP_{(\gamma,\eta)}\to \disc{G}$, constructed as in Section
 \ref{sec:principal-bundle} from a cocycle
 $(\gamma,\eta)\in \check{Z}^{3}_{\cU}(G,A\xrightarrow{\xmodmap}B)$.
 The morphism $\cP_{(\gamma,\eta)}\to \wh{G}$ is a weak equivalence
 \cite[Proposition 2.19]{Wockel09Principal-2-bundles-and-their-gauge-2-groups}
 and thus induces on $\cP_{(\gamma,\eta)}$ the structure of a \tgp such that
 \begin{equation*}
  \vcenter{\xymatrix@=1em{
  &\wh{G}\ar[drr]_{~}="t"\\
  [A\xrightarrow{\xmodmap}B]&&&\disc{G}\\
  &\cP_{(\gamma,\eta)}\ar[urr]
  \ar "2,1"+<.5em,.75em>;   "1,2" 
  \ar "2,1"+<.5em,-.75em>;  "3,2" ^{~}="s"
  \ar "1,2"-<.1em,1em>; "3,2"+<-.1em,1em> 
  \ar@{=} "1,2"-<.75em,1.5em>; "s"+<.125em,.75em>
  \ar@{=} "t"; "3,2"+<.75em,1.5em>
  }}.
 \end{equation*}
 is an equivalence of central extensions.
\end{proof}
 
 \begin{corollary}\label{cor:differentiation}
  If $A\xrightarrow{\xmodmap}B$ has discrete $A$, then the functor
  \eqref{eqn:Lie_functor} induces a morphism
  \begin{equation*}
   D\from \op{Ext}(\ul{G},[A\to B])\to\op{Ext}(\fg,L(B)),
  \end{equation*}
  given by passing from a central extension $[A\to B]\to \wh{G}\to \ul{G}$ to an equivalent one with \'etale $\wh{G}$ and then applying $L$.
 \end{corollary}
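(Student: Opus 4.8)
The plan is to combine the preceding Proposition, which replaces any central extension $[A\xrightarrow{\xmodmap}B]\to\wh{G}\to\disc{G}$ with discrete $A$ by an equivalent one whose total \tgp $\wh{G}$ is étale, with the Lie functor $L$ of \eqref{eqn:Lie_functor}, which is defined on $\cat{\tgps_{\acute{e}t}}$. Given a class in $\op{Ext}(\ul{G},[A\to B])$ I would first choose an étale representative $[A\xrightarrow{\xmodmap}B]\xrightarrow{p}\wh{G}\xrightarrow{q}\disc{G}$ and then apply $L$. All three \tgps here are étale: $\disc{G}$ trivially, $\wh{G}$ by the Proposition, and $[A\xrightarrow{\xmodmap}B]$ because its groupoid $A\times B\rrarrow B$ has étale structure maps once $A$ is discrete. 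Since the local Lie group underlying $[A\xrightarrow{\xmodmap}B]$ only sees the germ at the identity and $A$ is discrete, we have $L([A\xrightarrow{\xmodmap}B])\cong L(B)$, and likewise $L(\disc{G})\cong\fg$. Applying the functor $L$ to $p$ and $q$ thus yields a sequence of Lie algebras $L(B)\xrightarrow{L(p)}L(\wh{G})\xrightarrow{L(q)}\fg$.

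The heart of the argument is to check that this sequence is a central extension. Centrality is inherited: by definition of a central extension the induced morphism $\disc{G}\to\Aut([A\xrightarrow{\xmodmap}B])$ is $2$-equivalent to the trivial one, and applying $L$ turns this into the statement that the induced action of $\fg$ on $L(B)$ vanishes. For exactness one reads off the local Lie group of the étale model. By Section \ref{sec:principal-bundle} the underlying groupoid of $\wh{G}$ is presented, over an identity neighborhood, by $U_{[2]}\times_{(\eta,\gamma)}B\times A\rrarrow U_{[1]}\times B$; since $A$ is discrete this groupoid is étale and its germ at $e$ is smoothly that of $B\times U$, with the multiplication $m$ supplying a $B$-valued central extension cocycle of the local Lie group of $G$. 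Thus the local \tgp produced by the construction of Section \ref{sec:lie-third} is precisely a central extension of the local Lie group of $G$ by $B$, and differentiating this local central extension of (local) Lie groups yields the short exact sequence $0\to L(B)\to L(\wh{G})\to\fg\to 0$. This is the step I expect to be the main obstacle: one must verify that passing to germs and tangent spaces at the identity is exact, i.e.\ that the stacky fibre $[A\xrightarrow{\xmodmap}B]$ contributes exactly $L(B)$ and no higher correction, which is precisely where the discreteness of $A$ enters.

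It remains to show that $D$ is well defined on equivalence classes. First, the resulting class does not depend on the chosen étale representative: any two étale models of a fixed extension are equivalent as central extensions, both being equivalent to the original $\wh{G}$ through the essential equivalences of the preceding Proposition, and $L$, being a functor, sends an equivalence $(f,\lambda)$ of central extensions to a morphism of the associated Lie algebra extensions, which is necessarily an isomorphism and hence an equivalence in $\op{Ext}(\fg,L(B))$. Second, if two extensions represent the same class in $\op{Ext}(\ul{G},[A\to B])$, then choosing étale models for each and transporting the equivalence through the Proposition shows that the models are equivalent, so by the previous sentence they have the same image. Therefore $D\from\op{Ext}(\ul{G},[A\to B])\to\op{Ext}(\fg,L(B))$ is a well-defined map, as claimed.
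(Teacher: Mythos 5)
Your proposal is correct and follows essentially the same route as the paper: pass to an \'etale model via the preceding Proposition, obtain centrality from the triviality of the induced action of $\disc{G}$ on $[A\xrightarrow{\xmodmap}B]$, obtain exactness and the topological splitting of $L(\wh{G})\to\fg$ from the local (product) structure of the principal 2-bundle near the identity, and deduce well-definedness from the fact that $L$ carries an equivalence $(f,\lambda)$ of central extensions to an equivalence of the resulting Lie algebra extensions. The only cosmetic difference is that the paper phrases the splitting abstractly as ``local triviality of the 2-bundle yields a smooth local section of $(P_{1}\rrarrow P_{0})\to\disc{G}$,'' whereas you read the same fact off the explicit cocycle presentation of the groupoid.
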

 
\begin{proof}
 We first show that if $\wh{G}$ is \'etale, then the sequence
 $L(B)\to L(\wh{G})\to \fg$ is in fact a central extension of topological Lie
 algebras. Since the action of $\disc{G}$ on $[A\xrightarrow{\xmodmap}B]$ is
 trivial for any central extension
 $[A\xrightarrow{\xmodmap}B]\to \wh{G}\to \disc{G}$ (cf.\ \cite[Lemma
 84]{Schommer-Pries10Central-Extensions-of-Smooth-2-Groups-and-a-Finite-Dimensional-String-2-Group}),
 we only have to verify that the morphism $L(\wh{G})\to L(G)$ of topological
 Lie algebras has a continuous linear section. This in turn follows from the
 local triviality of the principal 2-bundle $\wh{G}\to \disc{G}$, since it
 implies the existence of a smooth local section of the smooth functor
 $(P_{1}\rrarrow P_{0})\to \disc{G}$ for any Lie groupoid $P_{1}\rrarrow P_{0}$
 representing $\wh{G}$.
 
 The claim now follows if we can show that for each equivalence of central
 extensions
 \begin{equation*}
  \vcenter{\xymatrix@=1em{
  &\wh{G}\ar[drr]|{\blabel{q}}="t"\\
  [A\xrightarrow{\xmodmap}B]&&&\disc{G}\\
  &\wh{G}'\ar[urr]|{\blabel{q'}}
  \ar "2,1"+<.5em,.75em>;   "1,2" |(.4){\blabel{p}}
  \ar "2,1"+<.5em,-.75em>;  "3,2" |(.4){\blabel{p}'}="s"
  \ar "1,2"-<.1em,1em>; "3,2"+<-.1em,1em> |{\blabel{f}}
  \ar@{=>} "1,2"-<.5em,1em>; "s"+<.125em,.75em>|{\blabel{\lambda}}
  \ar@{=} "t"; "3,2"
  }}.
 \end{equation*}
 with \'etale $\wh{G}$ and $\wh{G}'$ the resulting central extensions of Lie
 algebras $L(B)\to L(\wh{G})\to \fg$ and $L(B)\to L(\wh{G}')\to \fg$ are
 equivalent. But this readily follows from the fact that $f$ is in particular a
 morphism of \'etale \tgps and $L(f)$ makes the diagram
 \begin{equation*}
  \vcenter{\xymatrix@=1em{
  && L(\wh{G})\ar[dd]^{L(f)}\ar[drr]^{L(q)}\\
  L(B)\ar[urr]^{L(p)}\ar[drr]_{L(p')}&&&& \fg\\
  &&L(\wh{G}')\ar[urr]_{L(q')}
  }}
 \end{equation*}
 commute. 
\end{proof}

\begin{remark}\label{rem:differentiation_for_Cech_cocycles}
 Note that there is also
 a morphism
 \begin{equation*}
  D\from \check{H}^{2}(\bB G_{\bullet},\pi_{2}(G)\xrightarrow{\per_{\omega}}\fz)\to  H_{c}^{2}(\fg,\fz)
 \end{equation*}
 given by composing the morphism
 \begin{equation*}
  \check{H}^{2}(\bB G_{\bullet},\pi_{2}(G)\xrightarrow{\per_{\omega}}\fz)\to \op{Ext}(\ul{G},[\pi_{2}(G)\xrightarrow{\per_{\omega}}\fz])
 \end{equation*}
 from Theorem \ref{thm:from_Cech_Cohomology_to_Extensions} with
 \begin{equation*}
  D\from  \op{Ext}(\ul{G},[\pi_{2}(G)\xrightarrow{\per_{\omega}}\fz])\to \op{Ext}(\fg,\fz)\cong H_{c}^{2}(\fg,\fz)
 \end{equation*}
 from Corollary \ref{cor:differentiation}. This is clearly given on the cocycle
 level by
 \begin{equation*}
  DF_{i}(x,y):=d^{2}F_{i}(1,1)(x,y)-d^{2}F_{i}(1,1)(y,x)
 \end{equation*}
 for some $F_{i}\from U_{i}^{(2)}\to \fz$ with
 $(1,1)\in U_{i}^{(2)}\se G\times G$ (cf.\ \cite[Lemma
 4.6]{Neeb02Central-extensions-of-infinite-dimensional-Lie-groups}).
\end{remark}

 \subsection{Lie's Third Theorem for locally exponential Lie algebras}
 
\begin{tabsection}
 Central extensions of the Lie algebra $\g$ by the abelian Lie algebra $\z$ are
 classified by $H^2_{c}(\g, \z)$. We now use this fact and the established
 integration procedure to give a criterion for a Lie algebra to come from an
 \'etale \tgp. For this we first show how $[\omega]\in H_{c}^{2}(\fg,\fz)$ and
 the differentiable cocycle
 \begin{equation*}
  \phi:=  (\gamma_{i,j},\eta_{i,j,l},F_{i,j},\Phi_{(i,j),(i',j')},\Theta_{i,j,l})
 \end{equation*}
 in $\check{H}^{3}(\bB G_{\bullet},\pi_{2}(G)\xrightarrow{\per_{\omega}}\fz)$ from
 Lemma \ref{rem:subsuming_the_differentiable_cocycle} are related.
\end{tabsection}
 
 \begin{lemma}
  If we apply the Lie functor to the central extension
  $[\pi_{2}(G)\xrightarrow{\per_{\omega}}\fz]\to G_{\phi}\to\disc{G}$, then
  this results in a central extension isomorphic to $\fz \oplus_{\omega}\fg$.
 \end{lemma}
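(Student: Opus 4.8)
The plan is to compute $L(G_{\phi})$ by extracting the germ at the identity of the local Lie group carried by the étale \tgp $G_{\phi}$, exactly along the lines of Section~\ref{sec:lie-third}. Since $A=\pi_{2}(G)$ is discrete we have $L([\pi_{2}(G)\xrightarrow{\per_{\omega}}\fz])=\fz$, and by the preceding proposition we may assume $G_{\phi}$ is presented by an étale Lie groupoid. Applying $L$ then yields a central extension $\fz\to L(G_{\phi})\to\fg$, and the whole task is to identify its class in $H^{2}_{c}(\fg,\fz)$; I claim it is $[\omega]$.

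First I would restrict the presenting groupoid $\Gamma[\phi]$ to the chart $V_{e}=V$ at the identity. By the normalization $\gamma_{e,e}=0$ the restricted groupoid has object manifold $V\times\fz$, and its morphisms are generated by the smooth action $b\mapsto b+\per_{\omega}(a)$ of the \emph{discrete} group $\pi_{2}(G)$ on the $\fz$-factor; hence it is étale and $L(G_{\phi})=\fg\oplus\fz$ as a topological vector space, the discrete morphisms contributing nothing. Reading off the multiplication morphism \eqref{eqn:multiplication-morphism} on objects gives the local group law $\mu_{0}\big((g,b),(h,b')\big)=\big(gh,\,b+b'+F_{e,e}(g,h)\big)$ on $V\times\fz$, with $F_{e,e}=\per_{\omega}\circ\alpha_{e,e}$ from \eqref{eqn:definition_of_F_ij}. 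A crucial point is that $\mu_{0}$ is genuinely associative near $e$: its associator is the component $-\Theta_{e,e,e}$, and since $\Theta_{e,e,e}\from V_{e,e,e}\to\pi_{2}(G)$ is locally constant and vanishes at $(e,e,e)$ it is identically $0$ on the connected chart $V_{e,e,e}$, so $\dd F_{e,e}=\per_{\omega}\circ\Theta_{e,e,e}=0$ there. Thus $(V\times\fz,\mu_{0})$ is an honest local Lie group, namely the germ of the central extension $\fz\times_{F_{e,e}}G$.

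The heart of the matter is to identify this cocycle with the universal integrating cocycle $\locsm{F}$. Evaluating \eqref{eqn:boundary_of_alpha_ij} at $i=j=e$ and using $\aone{e}=0$ yields $\partial\alpha_{e,e}(g,h)=\aone{g}+g.\aone{h}-\aone{gh}=\partial\atwo{g,h}$ by \eqref{eqn:definition_of_alpha_and_beta_tilde}, so $\alpha_{e,e}(g,h)-\atwo{g,h}$ is a smoothly varying family of cycles. Its $\omega^{l}$-integral lies in $\Per{\omega}$ and is therefore locally constant in $(g,h)$; consequently $F_{e,e}-\locsm{F}$ is locally constant on $V\times V$ and has vanishing second differential at $(e,e)$, so $F_{e,e}$ and $\locsm{F}$ induce the same Lie algebra cocycle. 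By the integration property of $(\locsm{F},\locsm{\Theta})$ established in Section~\ref{sect:locally_smooth_cocycles}, the antisymmetrized second differential of $\locsm{F}$ at the identity is $\omega$; equivalently $\dd\locsm{F}=\per_{\omega}\circ\locsm{\Theta}$ takes values in the discrete $\pi_{2}(G)$ (cf.\ \eqref{eqn:cocycle2}) and so does not disturb the computation. Therefore the induced bracket on $\fg\oplus\fz$ is $[(X,u),(Y,v)]=([X,Y],\omega(X,Y))$, which exhibits $L(G_{\phi})\cong\fz\oplus_{\omega}\fg$ as central extensions of $\fg$ by $\fz$.

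The step I expect to demand the most care is the passage in the second paragraph from the generalized-morphism description of $m$ (a span with an essential equivalence) to the honest local functor $\mu_{0}$ on $V\times\fz$, together with the verification that none of the higher coherence data contributes correction terms to the differentiated bracket. Here étaleness is decisive: the overlap maps $\Phi_{(i,j),(i',j')}$ and the associator $\Theta_{i,j,l}$ are all locally constant with values in the discrete group $\pi_{2}(G)$ and vanish at the base point, so on a connected neighborhood of the identity every structure 2-morphism is the identity. This is precisely what collapses the a priori coherent (2-group) structure to a strictly associative local Lie group near $e$, and what guarantees that $L$ produces an honest Lie algebra central extension rather than a merely coherent one.
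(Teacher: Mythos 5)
Your proof follows essentially the same route as the paper's: the paper reduces the claim to checking that the local Lie group underlying $G_{\phi}$ near the identity is the germ of $\fz\times_{F}G$ for a locally smooth cocycle $F$ integrating $\omega$, and then defers the differentiation to \cite[Lemma 1.9]{Wockel08Categorified-central-extensions-etale-Lie-2-groups-and-Lies-Third-Theorem-for-locally-exponential-Lie-algebras} and \cite[Lemma 4.6]{Neeb02Central-extensions-of-infinite-dimensional-Lie-groups}; you have simply unpacked that reduction (restriction to the chart $V_{e}$, triviality of the associator near $e$ by \'etaleness, identification of $F_{e,e}$ with $\locsm{F}$ up to a locally constant error).

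One justification needs repair, and it is exactly the point where the non-discrete case differs from the classical one: you write that $\int_{\alpha_{e,e}(g,h)-\atwo{g,h}}\omega^{l}$ ``lies in $\Per{\omega}$ and is therefore locally constant.'' When $\Per{\omega}$ is not discrete in $\fz$ (the case this paper is about) that inference is invalid. The correct argument, which the paper uses for $\Phi_{(i,j),(i',j')}$ and $\Theta_{i,j,l}$, is that $(g,h)\mapsto[\alpha_{e,e}(g,h)-\atwo{g,h}]$ is a continuously varying family of cycles and hence a locally constant map into the discrete group $H_{2}(G)\cong\pi_{2}(G)$; only after that do you apply $\per_{\omega}$ to conclude the $\fz$-valued difference is locally constant. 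The same conflation of $\pi_{2}(G)$ with its image $\Per{\omega}\se\fz$ occurs in your remark that $\dd\locsm{F}=\per_{\omega}\circ\locsm{\Theta}$ ``takes values in the discrete $\pi_{2}(G)$.'' Both slips are one-line fixes using lemmas already proved in Section \ref{sec:differentiable_cocycles}, so the proof stands once they are restated correctly.
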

 
\begin{proof}
 By construction we have to check that the local Lie group associated to
 $G_{\phi}$ has the Lie algebra $\fz\oplus_{\omega}\fg$. But by Remark
 \ref{rem:differentiation_for_Cech_cocycles} this is exactly the same argument
 as \cite[Lemma
 1.9]{Wockel08Categorified-central-extensions-etale-Lie-2-groups-and-Lies-Third-Theorem-for-locally-exponential-Lie-algebras}
 (cf.\ also \cite[Lemma
 4.6]{Neeb02Central-extensions-of-infinite-dimensional-Lie-groups}).
\end{proof}

 This immediately implies the following

\begin{theorem}
 If $G$ is a simply connected Lie group with Lie algebra $\fg$, $\fz$ is a
 Mackey-complete locally convex space and $\omega\from \fg\times\fg\to\fz$ is
 a continuous Lie algebra cocycle. Then the differentiation homomorphism
 \begin{equation*}
  D\from  \op{Ext}(\ul{G},[\pi_{2}(G)\xrightarrow{\per_{\omega}}\fz])\to \op{Ext}(\fg,\fz)\cong H_{c}^{2}(\fg,\fz)
 \end{equation*}
 has $[\omega]$ in its image. Here $\ul{G}$ is the Lie group $G$ viewed as a
 Lie 2-group (see Example \ref{ex:Lie-2-groups}).
\end{theorem}

\begin{remark}\label{rem:relation_to_Neeb02}
 We now discuss the relation of the previous theorem to the results from
 \cite{Neeb02Central-extensions-of-infinite-dimensional-Lie-groups}. To this
 end, consider a morphism $\xmod$ of abelian Lie groups such that
 $\ker(\xmodmap)\leq \xmoda$ is a closed Lie subgroup and
 $\im{\xmodmap}\leq \xmodb$ is \emph{discrete}. Then $\pi:=\ker(\xmodmap)$ and
 $Z:=\xmodb/\im({\xmodmap})$ carry natural Lie group structures and
 $[\pi\to 0]\times[0\to Z]$ is equivalent to $[A\to B]$. Moreover, we have an
 induced sequence
 \begin{equation}\label{eqn:relation_to_Neeb02_2}
  \check{H}^{2}(\bB G_{\bullet},\pi\to 0)\xrightarrow{\chi_{*}} 
  \check{H}^{2}(\bB G_{\bullet},\xmod)\xrightarrow{\zeta_{*}} 
  \check{H}^{2}(\bB G_{\bullet},0\to Z)\to 0
 \end{equation}
 of abelian groups, given by composing cocycles with the canonical morphisms
 $\chi\from (\pi\to 0)\to (\xmod)$ and $\zeta\from(\xmod)\to (0\to Z)$ of chain
 complexes.
 
 We claim that \eqref{eqn:relation_to_Neeb02_2} is exact. It is clear that the
 sequence is of order two. If
 $\phi=(\gamma_{i,j},\eta_{i,j,k},F_{i},\Phi_{i,j},\Theta_{i})$ represents a
 class in $\ker(\zeta_{*})$, then there exist $ \xi _{i}\from U_{i}^{(1)}\to B$
 such that $\gamma_{i,j}+\check{\delta}(\xi)_{i,j}$ and
 $F_{i}-\dsimp (\xi)_{i}$ take values in $\im(\xmodmap)$\footnote{Strictly
 speaking, we would have to choose a refinement of the cover
 $(U_{i}^{(1)})_{i\in I_{1}}$ of $G$ and $\xi_{i}$ might only exist on this
 refinement. The same applies to each coboundary and each lift throughout the
 entire construction. Since the construction terminates after finitely many
 steps we may in the end choose a simplicial common refinement of all covers
 (cf.\ Lemma \ref{lem:refinements_to_simplicial_covers}) that are involved and
 thus may work throughout with one fixed cover.}. We thus may lift (after
 possibly refining the cover) $\gamma_{i,j}+\check{\delta}(\xi)_{i,j}$ to an
 $A$-valued cochain $ \rho _{i,j}\from U_{i}^{(1)}\cap U_{j}^{(1)}\to A$ and
 $F_{i}-\dsimp (\xi)_{i}$ to an $A$-valued cochain
 $\sigma_{i}\from U_{i}^{(2)}\to A$. Then
 ${\eta}':= \eta+\check{\delta}(\rho)$,
 $ \Phi':=\Phi-\dsimp (\rho)-\check{\delta}(\sigma)$ and
 $\Theta':=\Theta-\dsimp(\sigma)$ takes values in $\pi$:
 \begin{align*}
  \xmodmap \circ {\eta}'&=(\xmodmap \circ \eta)+(\xmodmap \circ \check{\delta}(\rho))=(\xmodmap \circ \eta)+
  \check{\delta}(\gamma)=0\\
  \xmodmap \circ \Phi'&=(\xmodmap \circ \Phi)-\dsimp(\gamma+\check{\delta}(\xi))-\check{\delta}(F-\dsimp(\xi))=(\xmodmap \circ \Phi)-\dsimp(\gamma)-\check{\delta}(F)=0\\
  \xmodmap \circ \Theta'&=\xmodmap \circ \Theta - \dsimp(F-\dsimp(\xi))=\xmodmap \circ \Theta - \dsimp(F)=0.
 \end{align*}
 Moreover,
 \begin{equation*}
  (0,\eta',0,\Phi',\Theta')=\phi+ D_{3}(\xi,\rho,\sigma)
 \end{equation*}
 is equivalent to $\phi$ and clearly contained in the image of $\chi_{*}$. This
 implies that \eqref{eqn:relation_to_Neeb02_2} is exact in
 $\check{H}^{2}(\bB G_{\bullet},\xmod)$. Finally, the
 surjectivity of $\zeta_{*}$ follows from a lifting argument similar to the
 previous one (i.e., lift $Z$-valued cocycles to $B$-valued cochains and
 compensate the failure of these lifts to being cocycles by $A$-valued
 cocycles).
 
 We now apply \eqref{eqn:relation_to_Neeb02_2} to
 $\pi_{2}(G)\xrightarrow{\per_{\omega}}\fz$ in the case that
 $\per_{\omega}(\pi_{2}(G))\leq \fz$ is discrete. The class of the integrating
 cocycle
 \begin{equation*}
  \phi:=  (\gamma_{i,j},\eta_{i,j,l},F_{i,j},\Phi_{(i,j),(i',j')},\Theta_{i,j,l})
 \end{equation*}
 in $\check{H}^{3}(\bB G_{\bullet},\pi_{2}(G)\xrightarrow{\per_{\omega}}\fz)$
 from Lemma \ref{rem:subsuming_the_differentiable_cocycle} then gets mapped
 under $\zeta_{*}$ to a class in
 \begin{equation*}
  \check{H}^{2}(\bB G_{\bullet},0\to Z)\cong \op{Ext}(G,Z)
 \end{equation*}
 (see Remark \ref{rem:a_bundle_theoretic_interpretation}). This is precisely
 the class of the integrating cocycle from
 \cite{Neeb02Central-extensions-of-infinite-dimensional-Lie-groups}.
\end{remark}
 
\begin{remark}\label{rem:injectivity_of_D}
 Since we assumed that $G$ is simply connected we have that the map
 \begin{equation*}
  D\from \op{Ext}(\ul{G},\ul{Z})\cong \check{H}^{2}(\bB G_{\bullet},0\to Z)\to H_{c}^{2}(\fg,\fz)\cong  \op{Ext}(\fg,\fz)
 \end{equation*}
 is injective for each $Z=\fz/\Gamma$ with $\Gamma\leq \fz$ discrete (see
 \cite[Theorem
 7.12]{Neeb02Central-extensions-of-infinite-dimensional-Lie-groups}). More
 precisely, the kernel of $D$ coincides with the image of
 \begin{equation*}
  \Hom(\pi_{1}(G),Z)\to \op{Ext}(\ul{G},\ul{Z}), \quad \varphi\mapsto (Z\times\wt{G})/\pi_{1}(G)
 \end{equation*}
 (where $\wt{G}\to G$ is the simply connected cover of $G$ and $\pi_{1}(G)$
 acts on $Z$ via $\varphi$). However, the map
 \begin{equation*}
  D\from \check{H}^{3}(\bB G_{\bullet},\pi_{2}(G)\xrightarrow{\per_{\omega}}\fz)\to  H_{c}^{2}(\fg,\fz)
 \end{equation*}
 is in general not injective. For instance, if $\fz=0$, then the differentiable
 cocycle
 \begin{equation*}
  \phi:=  (0,\eta_{i,j,l},0,\Phi_{(i,j),(i',j')},\Theta_{i,j,l})
 \end{equation*}
 in $\check{H}^{2}(\bB G_{\bullet},\pi_{2}(G)\xrightarrow{0}0)$ from Lemma
 \ref{rem:subsuming_the_differentiable_cocycle} is in general non-trivial,
 since it describes the 2-connected cover of $G$ (see Section
 \ref{sec:an_interpretation_in_terms_of_2_connected_covers}).
 
 In general, put $\pi:=\ker(\per_{\omega})$. Then the canonical morphism
 $\chi \from(\pi\to 0)\to(\pi_{2}(G)\xrightarrow{\per_{\omega}}\fz)$ of chain
 complexes gives rise to a morphism
 \begin{equation}\label{eqn:injectivity_of_D_1}
  \chi_{*}\from  \check{H}^{2}(\bB G_{\bullet},\pi\to 0)\to
  \check{H}^{2}(\bB G_{\bullet},\pi_{2}(G)\xrightarrow{\per_{\omega}}\fz),
 \end{equation}
 and we claim that the kernel of $D$ coincides with the subgroup
 $\chi_{*}(\check{H}^{2}(\bB G_{\bullet},\pi\to 0))$. This is in general
 non-trivial, for instance if $\pi_{2}(G)$ is finitely generated. Then $\pi$ is
 a direct summand in $\pi_{2}(G)$ since $\fz$ is torsion free and thus each
 generator $a$ is either contained in $\ker(\pi)$ or $\per_{\omega}$ is
 injective on $\langle a\rangle$. If $\pi$ is a direct summand in $\pi_{2}(G)$,
 then $\chi_{*}$ is clearly injective.
 
 In order to verify the claim, observe that if $D(\phi)=0$, then we can assume
 that the $\fz$-valued components of $\phi$ are locally constant. Consequently,
 $\phi$ can also be considered as a cocycle representing an element in
 $\check{H}^{2}(\bB G_{\bullet},\pi_{2}(G)\xrightarrow{\per_{\omega}}\fz^{\delta})$,
 where $\fz^{\delta}$ denotes the abelian Lie group $\fz$ with the discrete
 topology. From Remark \ref{rem:relation_to_Neeb02} we have the short exact
 sequence
 \begin{equation*}
  \chi_{*}( \check{H}^{2}(\bB G_{\bullet},\pi\to 0))\hookrightarrow
  \check{H}^{2}(\bB G_{\bullet},\pi_{2}(G)\xrightarrow{\per_{\omega}}\fz^{\delta})\xrightarrow{\zeta_{*}} 
  \check{H}^{2}(\bB G_{\bullet},0\to Z^{\delta})
 \end{equation*}
 with $Z^{\delta}:=\fz^{\delta}/\per_{\omega}(\pi_{2}(G))$. From covering
 theory it follows that
 \begin{equation*}
  \check{H}^{2}(\bB G_{\bullet},0\to {Z}^{\delta})\cong\op{Ext}(G,Z^{\delta})\cong \Hom(\pi_{1}(G),Z),
 \end{equation*}
 and this vanishes for $G$ being simply connected. This shows the claim.
\end{remark}

\begin{tabsection}
 This now readily implies our generalization of Lie's Third Theorem.
\end{tabsection}
 
 \begin{theorem}
  If $\fg$ is a locally convex locally exponential Lie algebra such that
  $\fz:=\fz(\fg)\se\fg$ is a complemented and Mackey-complete subspace, then there exists an
  \'etale \tgp $G$ with $L(G)\cong \fg$.
 \end{theorem}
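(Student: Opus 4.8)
The plan is to present $\fg$ as a topologically split central extension of an \emph{integrable} quotient and then run the integration machinery of Sections~\ref{sec:cohomology}--\ref{sec:geo-cocycle} on that quotient. Set $\fz:=\fz(\fg)$ and $\h:=\fg/\fz$. The center is a closed ideal, so $\h$ is again locally convex and locally exponential, and $\fz\to\fg\to\h$ is a central extension. Since $\fz$ is complemented, a continuous linear section $\h\to\fg$ exists; hence the extension is topologically split and is classified by a continuous cocycle $\omega\in H^{2}_{c}(\h,\fz)$ with $\fz\oplus_{\omega}\h\cong\fg$. Note also that $\fz$, being a complemented (hence closed, direct-factor) subspace of the Mackey-complete space $\fg$, is itself Mackey-complete, so the running convention on $\fz$ from the Conventions section is met.

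The decisive input is the enlargeability of $\h=\fg/\fz(\fg)$. Here I would use that $\h\cong\op{ad}(\fg)$ is the image of the adjoint representation, which for a locally exponential Lie algebra integrates to a connected Lie group modeled on a locally convex space \cite{Neeb06Towards-a-Lie-theory-of-locally-convex-groups}; passing to its universal cover gives a $1$-connected $H$ with $L(H)\cong\h$. This is precisely where the hypotheses on $\fg$ enter, and it is the main obstacle of the argument: the entire period obstruction to classical integrability lives in the central directions, and the content of the step is that once the \emph{full} center has been divided out the remaining algebra carries no such obstruction and enlarges in the ordinary sense. One should resist arguing by induction on the center, since $\h$ need not be centerless (e.g.\ for the Heisenberg algebra $\fg/\fz(\fg)$ is abelian, hence has non-trivial center); the adjoint-representation argument is what is needed.

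With $H$ fixed, I would feed $\omega$ into Lemma~\ref{rem:subsuming_the_differentiable_cocycle}, applied with $H$ in the role of $G$ and $\h$ in the role of $\fg$. This yields a differentiable $(\pi_{2}(H)\xrightarrow{\per_{\omega}}\fz)$-valued cocycle $\phi$ on $BH_{\bullet}$, whose \v{C}ech class is independent of all auxiliary choices by the Proposition on independence of choices. Proposition~\ref{prop:central-extensions-from-differentiable-cocycles} then turns $\phi$ into a central extension of \tgps $[\pi_{2}(H)\xrightarrow{\per_{\omega}}\fz]\to G_{\phi}\to\disc{H}$. Because $\pi_{2}(H)$ is discrete, the Proposition and Corollary at the beginning of Section~\ref{sec:lie-third} let me replace $G_{\phi}$, within its equivalence class, by an extension whose total object is an \emph{\'etale} \tgp.

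Finally I would differentiate. By the Lemma immediately preceding the theorem (again with $H,\h$ in place of $G,\fg$), applying the Lie functor $L$ of \eqref{eqn:Lie_functor} to $[\pi_{2}(H)\xrightarrow{\per_{\omega}}\fz]\to G_{\phi}\to\disc{H}$ recovers the central extension $\fz\oplus_{\omega}\h$, which is isomorphic to $\fg$ by the choice of $\omega$. Hence the \'etale \tgp $G:=G_{\phi}$ satisfies $L(G)\cong\fg$, which is the assertion. Beyond citing the results above, the only remaining work is routine: the index bookkeeping $G\leftrightarrow H$, $\fg\leftrightarrow\h$ relating the general construction to the present special case, and checking that $H$ indeed lies in the class of Lie groups to which Sections~\ref{sec:cohomology}--\ref{sec:geo-cocycle} apply.
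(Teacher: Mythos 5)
Your proposal is correct and follows essentially the same route as the paper: quotient by the center, integrate $\fg/\fz(\fg)$ via the adjoint group using \cite[Theorem IV.3.8]{Neeb06Towards-a-Lie-theory-of-locally-convex-groups}, realize $\fg\cong\fz\oplus_{\omega}\fg_{\op{ad}}$ from the chosen complement, and apply the preceding lemma together with the integration machinery of Sections \ref{sec:cohomology}--\ref{sec:geo-cocycle}. Your explicit remarks on passing to the universal cover (to meet the 1-connectedness convention) and on why one cannot induct on the center are sensible elaborations of steps the paper leaves implicit, not deviations from its argument.
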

 
 \begin{proof}
  Since $\fg$ is locally exponential, $\fg_{\op{ad}}:=\fg/\fz$ has
  $G_{\op{ad}}\se\Aut(\fg)$ as Lie group with $L(G_{\op{ad}})=\fg_{\op{ad}}$
  \cite[Theorem IV.3.8]{Neeb06Towards-a-Lie-theory-of-locally-convex-groups}.
  Since $\fz$ is complemented, we have that
  $\fg\cong \fz\oplus_{\omega_{\op{ad}}}\fg_{\op{ad}}$ for the continuous
  cocycle $\omega_{\op{ad}}\from \fg_{\op{ad}}\times\fg_{\op{ad}}\to\fz$,
  determined by $(x,y)\mapsto [x,y]_{1}$, where $[x,y]_{1}$ is the
  $\fz$-component with respect so some chosen topological isomorphism
  $\fg\cong \fz\times\fg_{\op{ad}}$. Thus we may apply the preceding lemma to
  the central extension
  \begin{equation*}
   \fz(\fg)\to\fg\to\fg_{\op{ad}},
  \end{equation*}
  which shows the claim.
 \end{proof}
 
\section{An interpretation in terms of 2-connected covers}
\label{sec:an_interpretation_in_terms_of_2_connected_covers}

\begin{tabsection}
 The integration of the Lie algebra cocycle $\omega$ to a locally smooth group
 cocycle is obstructed by $\pi_{2}(G)$ (or more precisely by
 $\per_{\omega}(\pi_{2}(G))$). In any case, the obstruction vanishes if
 $\pi_{2}(G)$ does (which is in particular the case for finite-dimensional
 Lie groups). For non-simply connected Lie groups one can always pass to the
 simply connected cover to resolve obstructions coming from non-trivial
 fundamental groups, but $\pi_{2}(G)$ cannot be ruled out in a similar fashion
 for the following reasons.
 \begin{itemize}
  \item The 2-connected ``cover'' ${G}^{\sharp}\to G$ exists as a topological
        group, but it might not have a Lie group structure.
  \item Even if $G^{\sharp}$ has a Lie group structure and $G^{\sharp}\to G$ is
        a submersion (for instance for certain loop groups or
        $U(\ell^{2}(\mathbb{C}))\to PU(\ell^{2}(\mathbb{C}))$), then
        $G^{\sharp}$ will in general not have a Lie algebra isomorphic to the
        one of $G$, since the kernel of $G^{\sharp}\to G$ cannot be discrete.
 \end{itemize}
 However, the central extension $\Pi_{2}(G)\to\disc{G}$ from Example
 \ref{ex:2-connected-cover} takes over the r\^ole of the 2-connected cover of
 $G$. This is made precise by the following theorem. Note that the underlying
 Lie groupoid of $\Pi_{2}(G)$ determines a simplicial manifold
 $\bB \Pi_{2}(G)_{\bullet}$. If we now assume that $G$ is metrizable, then each
 $\bB \Pi_{2}(G)_{n}$ is so and thus $\bB \Pi_{2}(G)_{\bullet}$ is in particular a
 simplicial group object in the category of compactly generated Hausdorff spaces. On
 these objects the (ordinary) geometric realization
 $|\Pi_{2}(G)|:=|\bB \Pi_{2}(G)_{\bullet}|$ is particularly well-behaved. Under
 these requirements we can now show that the canonical map
 $|\Pi_{2}(G)|\xrightarrow{q} G$, induced by $\ev\from P_{e}G\to G$, is a
 2-connected cover, i.e., is a fibration, $\pi_{2}(|\Pi_{2}(G)|)$ vanishes and
 $\pi_{i}(q)$ is an isomorphism for $i\neq 2$
\end{tabsection}
 
 \begin{theorem}
  If $G$ is a metrizable 1-connected Lie group, then
  $|\Pi_{2}(G)|\xrightarrow{q} G$ is a 2-connected cover of $G$. Moreover, the
  multiplication functor of $\Pi_{2}(G)$ induces on $|\Pi_{2}(G)|$ the
  structure of a group object in the category of compactly generated Hausdorff
  spaces and $q$ is a morphism thereof.
 \end{theorem}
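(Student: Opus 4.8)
The plan is to split the statement into a homotopy-theoretic half (that $q$ is a $2$-connected cover) and an algebraic half (that $|\Pi_{2}(G)|$ is a group object and $q$ a homomorphism), and to reduce both to standard properties of the geometric realization of good simplicial spaces. The metrizability hypothesis enters exactly here: as already noted in the paragraph preceding the theorem, it makes each $B\Pi_{2}(G)_{n}$ metrizable, hence compactly generated Hausdorff, and it makes $B\Pi_{2}(G)_{\bullet}$ proper (degeneracies are closed cofibrations). For such simplicial spaces geometric realization preserves finite products, sends levelwise weak equivalences to weak equivalences, and turns levelwise fiber bundles into fibration sequences; I will use these three facts as black boxes (cf.\ May, Segal).

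First I would pin down the homotopy type of $|\Pi_{2}(G)|$. The underlying Lie groupoid of $\Pi_{2}(G)$ is the action groupoid of $\wt{\Omega G}$ acting on $P_{e}G$ through $\tau$ and left translation, so its nerve realizes to the homotopy quotient $(P_{e}G)_{h\wt{\Omega G}}$, which sits in a fibration $P_{e}G\to |\Pi_{2}(G)|\to B\wt{\Omega G}$; properness of the relevant simplicial spaces (from metrizability) is what legitimizes this identification. Since $P_{e}G$ is contractible (paths retract to the constant path at $e$), this yields a weak equivalence $|\Pi_{2}(G)|\xrightarrow{\simeq} B\wt{\Omega G}$. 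Next I would identify $q$ under this equivalence. The evaluation $\ev\from P_{e}G\to G$ is a locally trivial principal $\Omega G$-bundle with contractible total space, so $G$ is a model for $B\Omega G$; and the universal covering $\pi_{2}(G)\to \wt{\Omega G}\to \Omega G$ is a principal $\pi_{2}(G)$-bundle, so applying the bar construction produces a fibration $K(\pi_{2}(G),1)\to B\wt{\Omega G}\to B\Omega G\simeq G$ which one checks agrees with $q$ up to homotopy.

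From this fibration sequence the homotopy computation is immediate, using $\pi_{n}(K(\pi_{2}(G),1))=\pi_{2}(G)$ for $n=1$ and $0$ otherwise. For $i\ge 3$ one reads off that $\pi_{i}(q)$ is an isomorphism, for $i=1$ both groups vanish, and at the critical spot the long exact sequence gives the segment $0\to \pi_{2}(|\Pi_{2}(G)|)\to \pi_{2}(G)\xrightarrow{\partial}\pi_{2}(G)\to \pi_{1}(|\Pi_{2}(G)|)\to 0$. The heart of the argument is that the connecting map $\partial$ is an isomorphism, which holds because $\wt{\Omega G}$ is the \emph{universal} cover of $\Omega G$: this forces $\pi_{1}(B\wt{\Omega G})=\pi_{2}(B\wt{\Omega G})=0$, so $\partial$ must identify $\pi_{2}(G)=\pi_{2}(B\Omega G)$ with $\pi_{1}(K(\pi_{2}(G),1))=\pi_{2}(G)$. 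Hence $\pi_{2}(|\Pi_{2}(G)|)=0$ and $q$ is an isomorphism on all other homotopy groups, i.e.\ a $2$-connected cover; that $q$ itself may be taken to be a fibration follows because it is realized from the levelwise principal $\pi_{2}(G)$-bundle of bar constructions above, to which the fibration-preservation property applies.

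For the second assertion I would use that the crossed module underlying $\Pi_{2}(G)$ makes it a \emph{strict} group object in the $1$-category of Lie groupoids (the excerpt records that its multiplication and inversion are represented by honest smooth functors). Consequently the nerve $B\Pi_{2}(G)_{\bullet}$ is a simplicial object in Lie groups, its levelwise operation being induced by the strict functor $m$ together with product-preservation of the nerve. Since geometric realization preserves finite products in compactly generated Hausdorff spaces, $|\Pi_{2}(G)|=|B\Pi_{2}(G)_{\bullet}|$ inherits a group structure and is a group object there; and $q$ is the realization of the simplicial group homomorphism induced by the $2$-group morphism $\Pi_{2}(G)\to \disc{G}$ coming from $\ev$, hence a morphism of group objects. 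The main obstacle throughout is not conceptual but the bookkeeping that guarantees metrizability genuinely delivers the three realization properties (product-, equivalence-, and fibration-preservation); once these are secured, both halves of the theorem are formal.
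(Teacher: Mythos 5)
Your proof is correct, but it reaches the conclusion by a genuinely different route than the paper. The paper quotes \cite[Proposition 4.9]{NikolausSachseWockel11A-Smooth-Model-for-the-String-Group} to get that $q$ is a fibration with fibre the $K(\pi_{2}(G),1)$ realizing $\cK=\ker(\Pi_{2}(G)\to\disc{G})$, and then, following \cite[Theorem 28]{BaezCransStevensonSchreiber07From-loop-groups-to-2-groups}, reduces everything to showing that $\pi_{1}(|\disc{\Omega G}|)\to\pi_{1}(|\cK|)$ is an isomorphism; this is done by comparing the extension $\disc{\pi_{2}(G)}\to\disc{\wt{\Omega G}}\to\disc{\Omega G}$ with $\disc{\pi_{2}(G)}\to\cE(\wt{\Omega G})\to\cK$ and using contractibility of the realized pair groupoid. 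You instead contract $P_{e}G$ out of the homotopy quotient to identify $|\Pi_{2}(G)|\simeq B\wt{\Omega G}$ and $G\simeq B\Omega G$, so that $q$ becomes $B$ applied to the universal covering $\wt{\Omega G}\to\Omega G$, and the crucial fact that the connecting map $\partial\from\pi_{2}(G)\to\pi_{2}(G)$ is an isomorphism falls out of $\pi_{1}(B\wt{\Omega G})=\pi_{2}(B\wt{\Omega G})=0$. Both arguments ultimately rest on the same input (universality of $\wt{\Omega G}\to\Omega G$), but yours makes the 2-connected-cover structure more transparent and handles the group-object claim more explicitly (levelwise groups in the nerve of the crossed module, product preservation of realization), whereas the paper's version outsources the point-set work — that $q$ is an honest fibration and that the fibre is identified on the nose, not merely up to weak equivalence — to the cited proposition. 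The one place where you are lighter than the paper is exactly there: your final sentence defers the fibration property of $q$ itself (as opposed to the weakly equivalent map $B\wt{\Omega G}\to B\Omega G$) to unspecified realization lemmas for proper simplicial spaces; you acknowledge this, and it is fixable by the same reference the paper uses, so I would not count it as a gap.
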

 
 \begin{proof}
  (cf.\ \cite[Theorem
  28]{BaezCransStevensonSchreiber07From-loop-groups-to-2-groups}) From
  \cite[Proposition
  4.9]{NikolausSachseWockel13A-smooth-model-for-the-string-group} it follows
  that $|\Pi_{2}(G)|\xrightarrow{q} G$ is a fibration and that the fibre is a
  $K(\pi_{2}(G),1)$. Note that the latter agrees with the geometric realization
  of
  \begin{equation*}
   \cK:=\ker(\Pi_{2}(G)\to G)=\left(\wt{\Omega G}\rtimes \Omega G\rrarrow \Omega G\right).
  \end{equation*}
  Then the same argument as in \cite[Theorem
  28]{BaezCransStevensonSchreiber07From-loop-groups-to-2-groups} now shows that
  the claim follows if we can show that for the canonical morphism
  $\disc{\Omega G}\to \cK$ the induced morphism
  $\pi_{1}(|\disc{\Omega G}|)\to \pi_{1}(|\cK|)$ is an isomorphism. If we
  consider the diagram
  \begin{equation*}
   \vcenter{\xymatrix{
   \disc{\pi_{2}(G)}
   \ar[r]
   \ar[d]&
   \disc{\wt{\Omega G}}
   \ar[r]
   \ar[d]&
   \disc{\Omega G}
   \ar[d]\\
   \disc{\pi_{2}(G)}
   \ar[r]&
   \cE(\wt{\Omega G})
   \ar[r]&
   \cK
   }}
  \end{equation*}
  of \tgps (where $\cE(\wt{\Omega G})$ is the pair groupoid of
  $\wt{\Omega G}$), then the top row is exact by construction and the bottom
  row is exact by \cite[Proposition
  4.9]{NikolausSachseWockel13A-smooth-model-for-the-string-group}. Since
  $|\cE(\wt{\Omega G})|$ is contractible by \cite[\S
  3]{Segal68Classifying-spaces-and-spectral-sequences} and \cite[Lemma
  4.4]{NikolausSachseWockel13A-smooth-model-for-the-string-group} and
  $\pi_{1}(\disc{\Omega G})\to \pi_{0}(\disc{G})$ is an isomorphism by
  construction, the claim follows from the commutativity of
  \begin{equation*}
   \vcenter{\xymatrix{
   \pi_{1}(|\disc{\Omega G}|)
   \ar[r]^{\cong}
   \ar[d]&
   \pi_{0}(|\disc{\pi_{2}(G)}|)
   \ar[d]^{\cong}\\
   \pi_{1}(|\cK|)
   \ar[r]^{\cong}&
   \pi_{0}(|\disc{\pi_{2}(G)}|)
   }}.
  \end{equation*}
~
 \end{proof}
 
\begin{tabsection}
 Thus $\Pi_{2}(G)\to\disc{G}$ may be interpreted as a 2-connected cover.
 Moreover, there is a smooth version $\Pi_{2}^{\infty}(G)$ of Example
 \ref{ex:2-connected-cover}, where one replaces $P_{e}G$ with
 \begin{equation}
  P_{e}^{\infty} G:=\{\gamma\in C^{\infty}([0,1],G)\mid \gamma(0)=e\}
 \end{equation}
 and $\Omega G$ with
 \begin{equation}
  \ker(\ev)\cap P_{e}^{\infty}G:= \Omega^{\infty} G:=\{\gamma\in C^{\infty}([0,1],G)\mid \gamma(0)=e=\gamma(1)\}.
 \end{equation}
 Since the inclusions $P_{e}^{\infty}G \hookrightarrow P_{e}G $ and
 $\Omega ^{\infty}G\to \Omega G$ are homotopy equivalences it follows from
 \cite[Proposition
 4.5]{NikolausSachseWockel13A-smooth-model-for-the-string-group} that
 $|\Pi_{2}^{\infty}(G)|\to G$ is also a 2-connected cover.
 
 If we apply the construction from Section
 \ref{sec:differentiable_cocycles} to $\fz=0$
 and thus the zero cocycle, then we obtain a differentiable cocycle $\phi_{0}$
 and by Proposition \ref{prop:central-extensions-from-differentiable-cocycles}
 a central extension
 \begin{equation*}
  [\pi_{2}(G)\to 0]\to \wh{G}_{\phi_{0}}\to \disc{G}.
 \end{equation*}
 As above, the geometric realization $|\wh{G}_{\phi_{0}}|\to G$ is a
 2-connected cover of $G$, but now only as an $A_{\infty}$-space since
 $|\wh{G}_{\phi_{0}}|$ is in general not a group (cf.\ \cite[\S
 3.7]{Schommer-Pries10Central-Extensions-of-Smooth-2-Groups-and-a-Finite-Dimensional-String-2-Group})\footnote{This
 can be made precise by constructing an explicit morphism
 $\wh{G}_{\phi_{0}}\to \Pi_{2}^{\infty}(G)$ with the aid of the maps $\aone{}$
 and $\atwo{}$. However, while the construction in Section
 \ref{sec:differentiable_cocycles} relies on concatenations of paths and
 triangles, the construction of $\Pi_{2}^{\infty}(G)$ relies on point-wise
 multiplication. This is the reason why the morphism
 $\wh{G}_{\phi_{0}}\to \Pi_{2}^{\infty}(G)$ would respect the composition (and
 also the group structure) only up to homotopy. Explicit formulae for this map
 would exceed its use.}. Moreover, we have a canonical morphism
 $\wh{G}_{\phi}\to \wh{G}_{\phi_{0}}$, induced by forgetting the
 $\fz$-component in each term. This then gives rise to a central extension
 \begin{equation*}
  \disc{\fz}\to \wh{G}_{\phi}\to \wh{G}_{\phi_{0}}
 \end{equation*}
 (in the more general setting from \cite[\S
 3.6]{Schommer-Pries10Central-Extensions-of-Smooth-2-Groups-and-a-Finite-Dimensional-String-2-Group}),
 which can be seen as a central extension of the 2-connected cover
 $\wh{G}_{\phi_{0}}$ of $G$. In this light the construction from Section
 \ref{sec:differentiable_cocycles} looks as if we have first passed to the
 2-connected cover $\wh{G}_{\phi_{0}}$, where we then have solved the integration
 problem which is trivial due to the 2-connectedness of $\wh{G}_{\phi_{0}}$.
\end{tabsection}
 
 \appendix
 
 \section{Appendix: Differential calculus on locally convex spaces}
 \label{sect:appendix}
 
\begin{tabsection}
 We provide some background material on locally convex Lie groups and their Lie
 algebras in this appendix. See also
 \cite{Wockel13Infinite-dimensional-and-higher-structures-in-differential-geometry} for more details on this.
\end{tabsection}
 
\begin{definition}
 \label{def:diffcalcOnLocallyConvexSpaces} Let \mbox{ $X$} and \mbox{ $Y$} be
 locally convex spaces and \mbox{ $U\se X$} be open. Then \mbox{
 $f\from U\to Y$} is \emph{differentiable} or \emph{\mbox{ $C^{1}$}} if it is
 continuous, for each \mbox{ $(x,v)\in U\times X$} the differential
 \[
  df (x).v:=\lim_{h\to 0}\frac{f (x+hv)-f (x)}{h}
 \]
 exists and if the map \mbox{ $df\from U\times X\to Y$} is continuous. If
 \mbox{ $n>1$} we inductively define \mbox{ $f$} to be \emph{\mbox{ $C^{n}$}}
 if it is \mbox{ $C^{1}$} and \mbox{ $df$} is \mbox{ $C^{n-1}$} and to be
 \mbox{ $C^{\infty}$} or \emph{smooth} if it is \mbox{ $C^{n}$} for all
 $n\in\N$. We denote the corresponding spaces of maps by \mbox{ $C^{n}(U,Y)$}
 and \mbox{ $C^{\infty}(U,Y)$}.
 
 A (locally convex) manifold is a topological Hausdorff space (without any
 further topological requirements) that is locally homeomorphic to open subsets
 of a locally convex space with smooth chart changes. A (locally convex)
 \textit{Lie group} is a group which is a smooth Hausdorff manifold modeled on
 a locally convex space such that the group operations are smooth. A locally
 convex Lie algebra is a Lie algebra, whose underlying vector space is locally
 convex and whose Lie bracket is continuous.
\end{definition}
 
 The proof of the following theorem is standard, see for instance \cite[Proposition
 III.1.9.18]{Bourbaki98Lie-groups-and-Lie-algebras.-Chapters-1--3}.
 
 \begin{theorem}\label{thm:globalisation-of-smooth-structures-on-groups}
  Let $G$ be a group $U\se G$ be a subset containing $e$ and let $U$ be
  endowed with a manifold structure. Moreover, assume that there exists an open
  neighborhood $V\se U$ of $e$ such that
  \begin{itemize}
   \item [i)] $V^{-1}=V$ and $V\cdot V\se U$,
   \item [ii)] $V\times V\ni (g,h)\mapsto gh \in U$ is smooth,
   \item [iii)] $V\ni g\mapsto g^{-1}\in V$ is smooth and
   \item [iv)] $V$ generates $G$ as a group.
  \end{itemize}
  Then there exists a manifold structure on $G$ such that $V$ is open in $G$
  and such that group multiplication and inversion is smooth. Moreover, for
  each other choice of $V$, satisfying the above conditions, the resulting
  smooth structures on $G$ coincide.
 \end{theorem}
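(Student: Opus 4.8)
The plan is to equip $G$ with a manifold structure by transporting the given one on $V$ around the group via left translations, and to reduce every smoothness claim to hypothesis (ii), that multiplication is smooth on $V\times V$. Since $V$ is open in the manifold $U$ it carries an atlas; for each $g\in G$ and each chart $(\Omega,\varphi)$ of $V$ I take $(g\Omega,\,\varphi\circ L_{g^{-1}})$ as a candidate chart of $G$, where $L_g\from x\mapsto gx$. These chart domains cover $G$ because $e\in V$ and $V$ generates $G$, so every element lies in some $gV$ (indeed $g\in gV$). The crux of the construction is the smoothness of the transition maps, which amounts to the following local statement: for $a\in G$ and $x_{0}\in V$ with $ax_{0}\in V$, the translation $x\mapsto ax$ is smooth from a neighborhood of $x_{0}$ in $V$ into $V$.

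To prove this local statement I would factor $L_a$ through multiplication on $V$. Put $y_{0}:=ax_{0}\in V$ and, for $x$ near $x_{0}$, write $ax=y_{0}\cdot(x_{0}^{-1}x)$. Since $V=V^{-1}$ by (i) we have $x_{0}^{-1}\in V$, so $x\mapsto x_{0}^{-1}x$ is the restriction of the smooth multiplication $V\times V\to U$ to $\{x_{0}^{-1}\}\times V$ and hence smooth into $U$; at $x=x_{0}$ its value is $e\in V$, so on a smaller neighborhood it maps smoothly into $V$. Composing with $z\mapsto y_{0}z$, again a restriction of multiplication $V\times V\to U$, and using that its value at $x_{0}$ is $y_{0}\in V$ (an open set), I obtain that $x\mapsto ax$ is smooth into $V$ near $x_{0}$. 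Applied with $a=g^{-1}h$, together with the smoothness of the chart changes internal to $V$, this shows that $(g\Omega,\varphi\circ L_{g^{-1}})$ and $(h\Omega',\varphi'\circ L_{h^{-1}})$ are smoothly compatible, so the candidate charts form a smooth atlas.

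Next I would install the topology on $G$ by declaring each $g\Omega$ open and each candidate chart a homeomorphism, equivalently: a set is open iff it meets every $gV$ in a set open in the transported topology of $V$. The local smoothness—hence continuity—of left translations just established makes every $L_g$ a homeomorphism and turns this into a group topology: continuity of multiplication at a point $(g,h)$ is read off in the charts around $g$, $h$ and $gh$ from the factorization above and (ii)--(iii), and continuity of inversion from (iii). For the Hausdorff property I would exploit that $V$ is a Hausdorff manifold containing $e$: for $x\in V$ with $x\neq e$ there is an open neighborhood of $e$ inside $V$ missing $x$, and for $x\notin V$ the set $V$ itself misses $x$; hence the intersection of all neighborhoods of $e$ equals $\{e\}$, and a topological group with this property is Hausdorff.

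With a Hausdorff smooth atlas in hand, $G$ is a locally convex manifold, and the local smoothness established above upgrades to global smoothness of $m\from G\times G\to G$ and of inversion: near any $(g,h)$ the map $m$ is, in the charts $L_g,L_h,L_{gh}$, a local multiplication on $V$ composed with translations, all smooth by the key local statement. By construction $V$ is open (it is the chart domain for $g=e$) and the chart $\varphi\circ L_e=\varphi$ recovers the given structure on $V$. For uniqueness, if $G$ carries two such Lie group structures agreeing with the given one on the open set $V$, then $\id\from G\to G$ is smooth near $e$ (it is the identity there), and near any $g$ it factors as $L_g\circ\id\circ L_{g^{-1}}$ with the translations smooth in the respective structures, so $\id$ is smooth everywhere; the same argument for its inverse shows it is a diffeomorphism, and the two structures coincide. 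The main obstacle is exactly the \emph{key local statement}—smoothness of $x\mapsto ax$ for arbitrary $a\in G$ from only smoothness of multiplication on $V\times V$—after which the group topology, Hausdorffness, global smoothness and uniqueness all follow formally from the factorization through $V$.
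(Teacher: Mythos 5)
Your construction follows the standard route (the paper itself gives no proof, deferring to Bourbaki, Prop.~18 in Ch.~III, \S 1.9), and your key local statement about left translations, including its proof via the factorization $ax=(ax_{0})(x_{0}^{-1}x)$, is correct and yields the smooth compatibility of the left-translated charts. But there is a genuine gap at the step where you pass from left translations to the group structure: in your atlas, multiplication near $(g,h)$ reads in charts as $(x,y)\mapsto (gh)^{-1}(gx)(hy)=(h^{-1}xh)\,y$, and inversion near $g$ reads as $x\mapsto g\,x^{-1}g^{-1}$, so you need smoothness (indeed, already continuity) of the conjugation $x\mapsto h^{-1}xh$ near $e$ for \emph{arbitrary} $h\in G$. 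Your key local statement does not supply this: the intermediate map $x\mapsto xh$ takes values near $h\notin V$, and reading it in the chart at $h$ brings you straight back to the conjugation, so the appeal to ``the factorization above and (ii)--(iii)'' is circular. The same issue is already present at the topological level: declaring the left translates of opens of $V$ open gives a left-invariant topology, but joint continuity of multiplication is exactly continuity of all inner automorphisms at $e$ (Bourbaki's axiom (GV$_{\mathrm{III}}$)), which is not automatic. Indeed, without hypothesis (iv) the statement is false --- take $G=\R\rtimes_{\alpha}\Z$ with $\alpha$ a discontinuous additive automorphism of $\R$ and $V=(-1,1)\times\{0\}$; no group topology makes $V$ open with its usual topology --- and tellingly, you use (iv) only for the covering claim, which as you note yourself is vacuous since $g\in gV$ for every $g$. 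An essential use of (iv) is therefore missing.

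The gap is fixable in your own style. For $a\in V$ write $c_{a}(x):=axa^{-1}=a\,(xa^{-1})$: by (i) and (ii) the map $x\mapsto xa^{-1}$ is smooth $V\to U$ with value $a^{-1}\in V$ at $x=e$, so after shrinking it maps into $V$; then $z\mapsto az$ is smooth into $U$ with value $e\in V$ at $z=a^{-1}$, so after a second shrinking $c_{a}$ is smooth from a neighborhood of $e$ into $V$ with $c_{a}(e)=e$. For arbitrary $g\in G$, hypothesis (iv) gives $g=v_{1}\cdots v_{n}$ with $v_{i}\in V$, and $c_{g}=c_{v_{1}}\circ\cdots\circ c_{v_{n}}$ is smooth near $e$ by composing on successively smaller neighborhoods. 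With this conjugation lemma in hand, your factorizations do produce a group topology, and multiplication and inversion become smooth in the left-translation atlas; the remainder of your argument (Hausdorffness via the neighborhood filter of $e$, recovery of the given structure on $V$, and uniqueness via local smoothness of the identity conjugated by translations) then goes through as written. Note that in Bourbaki's formulation the conjugation condition appears as an explicit hypothesis; in the paper's variant it is a consequence of (iv), and supplying that deduction is the one step your proposal omits. (A cosmetic point: the transition between the charts at $g$ and at $h$ is $\varphi'\circ L_{h^{-1}g}\circ\varphi^{-1}$, so your key statement should be invoked with $a=h^{-1}g$ rather than $g^{-1}h$.)
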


 \begin{definition}\label{def:exponentialfunction}
  Let \mbox{\mbox{ $G$}} be a locally convex Lie group. The group \mbox{\mbox{
  $G$}} is said to have an \emph{exponential function} if for each \mbox{\mbox{
  $x \in \fg$}} the initial value problem
  \[
   \gamma (0)=e,\quad \gamma(t)^{-1}\cdot \gamma'(t)=x
  \]
  has a solution \mbox{\mbox{ $\gamma_{x}\in C^{\infty} (\R,G)$}} and the
  function
  \[
   \exp_{G}:\fg\to G,\;\;x \mapsto \gamma_x (1)
  \]
  is smooth. Furthermore, if there exists a zero neighborhood \mbox{\mbox{
  $W\se \fg$}} such that \mbox{\mbox{ $\left.\exp_{G}\right|_{W}$}} is a
  diffeomorphism onto some open identity neighborhood of \mbox{\mbox{ $G$}},
  then \mbox{\mbox{ $G$}} is said to be \emph{locally exponential}.
 \end{definition}
 
 The Fundamental Theorem of
  Calculus for locally convex spaces (cf.\ \cite[Theorem
  1.5]{Glockner02Infinite-dimensional-Lie-groups-without-completeness-restrictions})
  yields that a locally convex Lie group \mbox{\mbox{ $G$}} can have at most
  one exponential function (cf.\ \cite[Lemma
  II.3.5]{Neeb06Towards-a-Lie-theory-of-locally-convex-groups}).
  
  Typical examples of locally exponential Lie groups are Banach-Lie groups (by
  the existence of solutions of differential equations and the inverse mapping
  theorem, cf.\ \cite{Lang99Fundamentals-of-differential-geometry}) and groups
  of smooth and continuous mappings from compact manifolds into locally
  exponential groups (\cite[\S
  3.2]{Glockner02Lie-group-structures-on-quotient-groups-and-universal-complexifications-for-infinite-dimensional-Lie-groups},
  \cite{Wockel06Smooth-extensions-and-spaces-of-smooth-and-holomorphic-mappings}).
  However, diffeomorphism groups of compact manifolds are never locally
  exponential (cf.\ \cite[Example
  II.5.13]{Neeb06Towards-a-Lie-theory-of-locally-convex-groups}) and direct
  limit Lie groups not always (cf.\ \cite[Remark
  4.7]{Glockner05Fundamentals-of-direct-limit-Lie-theory}). For a detailed
  treatment of locally exponential Lie groups and their structure theory we
  refer to \cite[Section
  IV]{Neeb06Towards-a-Lie-theory-of-locally-convex-groups}.
 
 \begin{definition}\label{def:loc-exp}
  A locally convex Lie algebra $\fg$ is said to be locally exponential if there
  exists a circular convex open zero neighborhood $U\se \fg$ and an open
  subset $D\se U\times U$ on which there exists a smooth map
  \[
   m_{U}\from D\to U,\quad (x,y)\mapsto x*y
  \]
  such that $(D,U,m_{U},0)$ is a local Lie group and such that the following
  holds.
  \begin{itemize}
   \item [i)] For $x\in U$ and $|t|, |s|, |t+s| \leq 1$, we have
         $(tx, sx) \in D$ with $tx*sx=(t+s)x$.
   \item [ii)] The second order term in the Taylor expansion of $m_{U}$ in $0$
         is $b(x,y)=\frac{1}{2}[x,y]$.
  \end{itemize}
 \end{definition}
 
  As above, one
  has that all Banach-Lie algebras are locally exponential, as well as all Lie
  algebras of locally exponential groups (cf.\ \cite[Example
  IV.2.4]{Neeb06Towards-a-Lie-theory-of-locally-convex-groups}).
 
 \begin{Theorem}
  (\cite[Theorem
  IV.3.8]{Neeb06Towards-a-Lie-theory-of-locally-convex-groups})\label{thm:gAdIsEnlargible}
  Let $\fg$ be a locally exponential Lie algebra. Then the adjoint group
  $G_{\ad}\leq \Aut(\fg)$ carries the structure of a locally exponential Lie
  group whose Lie algebra is $\fg_{\ad}:=\fg /Z(\fg)$.
 \end{Theorem}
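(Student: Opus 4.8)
The plan is to realize $G_{\ad}$ through the adjoint representation and to transport to $\fg_{\ad}$ the local group structure that local exponentiality provides on $\fg$. First I would record that $Z(\fg)=\{z\in\fg : [z,\fg]=0\}$ is a closed ideal, since the bracket is continuous, so that $\fg_{\ad}:=\fg/Z(\fg)$ is again a Hausdorff locally convex Lie algebra and $\ad$ descends to a \emph{continuous injective} homomorphism $\fg_{\ad}\to\op{der}(\fg)\se\mathcal{L}(\fg)$. The point of passing to $\fg_{\ad}$ is precisely that its adjoint action is faithful, which is what will let me avoid any splitting hypothesis on $Z(\fg)$.

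Next I would produce a chart for $G_{\ad}$ near the identity. Using the local multiplication $m_U\colon(x,y)\mapsto x\ast y$ on the circular convex zero neighborhood $U$ from Definition \ref{def:loc-exp}, the conjugation $c_x(y)=x\ast y\ast(-x)$ defines a local smooth action of the local group on $\fg$; its derivative at $y=0$ gives $\op{Ad}(x)=e^{\ad x}\in\Aut(\fg)$, and $x\mapsto\op{Ad}(x)$ is a local homomorphism from $(U,\ast)$ into $\Aut(\fg)$ whose kernel (elements mapping to the identity) is exactly $U\cap Z(\fg)$. Hence $\op{Ad}$ factors through $q\colon\fg\to\fg_{\ad}$ and induces a bijection $\exp_{\ad}$ from a zero neighborhood $\bar U\se\fg_{\ad}$ onto $\op{Ad}(U)$, whose elements generate $G_{\ad}$.

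Then I would descend the multiplication. Since $\op{Ad}$ is a local homomorphism with $\op{Ad}(x)=\op{Ad}(x')$ iff $q(x)=q(x')$ for small $x,x'$, the value $q(x\ast y)$ depends only on $q(x)$ and $q(y)$, so $q\circ m_U$ descends to a well-defined $\bar m_U$ on $\bar U$. Transported through $\exp_{\ad}$, the group multiplication of $G_{\ad}$ on $\op{Ad}(U)$ is exactly $\bar m_U$ and inversion is negation; the one-parameter-subgroup identity $tx\ast sx=(t+s)x$ and the quadratic Taylor term $\tfrac12[\,\cdot\,,\cdot\,]$ both pass to the quotient, so $(\bar U,\bar m_U,0)$ exhibits $\fg_{\ad}$ as locally exponential. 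Declaring $\exp_{\ad}$ a chart makes $\op{Ad}(U)$ a manifold on which multiplication and inversion are smooth; choosing $V\se\bar U$ with $V=-V$ and $V\ast V\se\bar U$, its image generates $G_{\ad}$, and Theorem \ref{thm:globalisation-of-smooth-structures-on-groups} globalizes this to a Lie group structure on $G_{\ad}$ with $\op{Ad}(U)$ open. The exponential function $\exp_{G_{\ad}}(q(x))=e^{\ad x}$ is then a local diffeomorphism, so $G_{\ad}$ is locally exponential with $L(G_{\ad})\cong\fg_{\ad}$.

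The hard part will be the smoothness of $\bar m_U$ on $\fg_{\ad}$, since the theorem makes \emph{no} complementedness assumption on $Z(\fg)$ and so $q$ need not admit a continuous linear section; one cannot simply push $m_U$ through a local section. The resolution I would pursue is to check smoothness downstairs through the faithful embedding $\ad(\fg)\hookrightarrow\op{der}(\fg)$: since $\ad$ intertwines $\ast$ with the genuine operator composition $e^{\ad x}\circ e^{\ad y}$, smoothness of $\bar m_U$ would follow from the smoothness of this operator product together with a smooth operator logarithm near $\op{id}$, providing the chart independently of any splitting of the center. Verifying that such an operator logarithm is available and smooth in the given locally convex setting — equivalently, that $\ad$ realizes $\fg_{\ad}$ as a closed locally exponential subalgebra of $\op{der}(\fg)$ — is the crux on which the whole argument rests.
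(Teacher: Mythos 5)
First, a point of comparison: the paper does not prove this statement at all. It is imported verbatim from \cite[Theorem IV.3.8]{Neeb06Towards-a-Lie-theory-of-locally-convex-groups} and used as a black box (hence the terminal box and the absence of a proof environment), so there is no in-paper argument to measure your sketch against; what follows assesses it on its own terms and against Neeb's proof.

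Your outline follows the right strategy --- descend the local multiplication $m_U$ of Definition \ref{def:loc-exp} to $\fg_{\ad}=\fg/Z(\fg)$, realize the resulting local group faithfully inside $\Aut(\fg)$ via $\op{Ad}$, and globalize with Theorem \ref{thm:globalisation-of-smooth-structures-on-groups} --- but two steps are genuinely open. First, the assertion that the ``kernel'' of the local homomorphism $x\mapsto\op{Ad}(x)$ on $U$ is exactly $U\cap Z(\fg)$ (equivalently, that $\op{Ad}(x)=\op{Ad}(x')$ iff $q(x)=q(x')$ for small $x,x'$) is precisely what makes $\bar m_U$ well defined, and it is not formal. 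The implication $\op{Ad}(x)=\id\Rightarrow x\in Z(\fg)$ fails for individual elements already in Banach--Lie algebras without a norm bound ($e^{\ad x}=\id$ does not force $\ad x=0$), and in the locally convex setting there is no norm or spectral calculus to make ``$x$ small'' do this work; one must instead exploit axiom (i) of Definition \ref{def:loc-exp} to see that the whole one-parameter family $t\mapsto\op{Ad}(tx)$ is trivial and differentiate at $t=0$. Likewise the inclusion $U\cap Z(\fg)\se\ker(\op{Ad})$ needs the nontrivial fact that elements of $Z(\fg)$ are central for $*$. Second, your proposed fix for the smoothness of $\bar m_U$ --- a smooth operator logarithm near $\id$ in $\op{der}(\fg)\se\mathcal{L}(\fg)$ --- is not available here: for a general locally convex $\fg$ the algebra $\mathcal{L}(\fg)$ carries no useful Lie group structure and no holomorphic functional calculus, and $\ad(\fg)$ need not be closed in it, so this route collapses outside the Banach case. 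The difficulty you single out as the crux in fact has a more elementary resolution: $q\from\fg\to\fg_{\ad}$ is a continuous, open, surjective linear map, and in the calculus of Definition \ref{def:diffcalcOnLocallyConvexSpaces} a map on an open subset of $\fg_{\ad}\times\fg_{\ad}$ is $C^{k}$ as soon as its pull-back along $q\times q$ is, since difference quotients can be computed upstairs and continuity of the differentials is tested along the open surjection $q\times q$; no continuous linear section and no logarithm are needed. With these two points supplied, the remainder of your argument (generation, Theorem \ref{thm:globalisation-of-smooth-structures-on-groups}, local exponentiality of the quotient) goes through as in \cite{Neeb06Towards-a-Lie-theory-of-locally-convex-groups}.
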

 
\begin{wrapfigure}{r}{0.4\textwidth}
	\centering
 \includegraphics[width=0.4\textwidth]{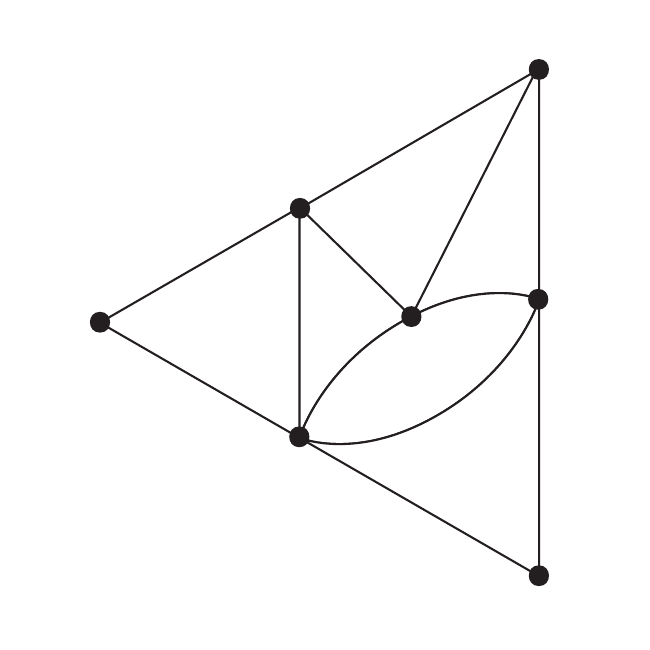}%
 \caption{The triangulation of $\Sigma$} \label{fig:Triangulation}
\end{wrapfigure}

 Recall that a locally convex space $X$ is said to be \emph{convenient} or
\emph{Mackey-complete} if each element of $C^{\infty}([0,1],X)$ has an integral
in $X$ (cf.\ \cite[\S
I.2]{KrieglMichor97The-Convenient-Setting-of-Global-Analysis}). In particular,
complete spaces are convenient \cite[Theorem
I.2.14]{KrieglMichor97The-Convenient-Setting-of-Global-Analysis}. In what
follows we silently assume that $\fz$ is a Fr\'echet space. Then all claims
follow from the results of
\cite{Wockel06Smooth-extensions-and-spaces-of-smooth-and-holomorphic-mappings}.
All what we claim also stays valid if $\fz$ is only assumed to be convenient,
the more involved arguments for this case can be found in
\cite{KrieglMichor97The-Convenient-Setting-of-Global-Analysis}.
 
 We treat $\Delta^{(n)}\se \R^{n}$ as a manifold with corners as in
\cite{Wockel06Smooth-extensions-and-spaces-of-smooth-and-holomorphic-mappings}.
Thus a map $f\from \Delta^{(n)}\to G$ is called \emph{ $C^{1}$} if it is
differentiable in the interior $\op{int}(\Delta^{(n)})$ and in each local chart
of $G$ the differentials $(x,v)\mapsto df(x).v$ extend continuously to the
boundary $\partial(\Delta^{(n)})$. It is called \emph{ $C^{k}$} if it is
$C^{1}$ and the differential is $C^{k-1}$ and smooth if it is $C^{k}$ for each
$k\in \N$. From this one defines a smooth singular $n$-chain to be a formal sum
$f_{1}+...+f_{n}$ of smooth maps $f_{i}\from \Delta^{(n)}\to G$ and denotes by
$\mathscr{C}_{n}(G)$ the abelian group of smooth $n$-chains. Since the coface
maps $\Delta^{(n-1)}\to \Delta^{n}$ are smooth, $\psing$ restricts to a
boundary operator $\psing\from \mathscr{C}_{n}(G)\to \mathscr{C}_{n-1}(G)$.
 
 If $f\from [0,1]^{n}\to \fz$ is smooth, then
 \begin{equation*}
  \hat{f}_{n}\from [0,1]\to C^{\infty}([0,1]^{n-1},\fz),\quad
  s\mapsto ((x_{1},...,x_{n-1})\mapsto f(x_{1},...,x_{n-1},s))
 \end{equation*}
 is smooth. Since $C^{\infty}([0,1]^{n-1},\fz)$ is again Fr\'echet
 (respectively convenient), it follows from \cite[Theorem
 I.2.14]{KrieglMichor97The-Convenient-Setting-of-Global-Analysis} that
 $\int_{0}^{1}f\mathop{dx_{n}}:=\int_{0}^{1}\hat{f}_{n}$ exists in $\fz$. We
 thus may define the iterated integral
 \begin{equation*}
  \int_{[0,1]^{n}}f:=\int_{0}^{1}...\left(\int_{0}^{1}f \mathop{dx_{n}}\right)...\mathop{dx_{1}}
 \end{equation*}
 If $\omega$ is a smooth $n$-form on $[0,1]^{n}$, then we set as usual
 \begin{equation*}
  \int_{[0,1]^{n}} \omega:=\int _{[0,1]^{n}}\omega( \partial_{x_{n}},\ldots,\partial_{x_{1}})
 \end{equation*}
 
 For $q_{n}\from [0,1]^{n}\to \Delta^{(n)}$ a smooth map that restricts to a
 diffeomorphism on the interior (e.g., take
 \begin{equation*}
  (x_{1},...,x_{n})\mapsto ((1-x_{2})\cdots(1-x_{n})x_{1},...,(1-x_{n})x_{n-1},x_{n})
 \end{equation*}
 as such a map) and $\omega\in \Omega^{n}(\Delta^{(n)},\fz)$, we then define
 \begin{equation*}
  \int_{\Delta^{(n)}}\omega:=\int_{[0,1]^{n}}\mathop{q_{n}^{*}}\omega.
 \end{equation*}
 For $\sigma\in C^{\infty}(\Delta^{(n)},G)$ and $\omega\in \Omega^{n}(G,\fz)$
 we define $\int_{\sigma}\omega:=\int_{\Delta^{(n)}}\sigma^{*}\omega$ and,
 moreover, for each $\sigma\in \mathscr{C}_{n}(G)$ by additive extension.

 \begin{proposition}\label{prop:smoothness_of_integration}
  The map
  \begin{equation}\label{eqn:integration_is_smooth}
   C^{\infty}(\Delta^{(n)},G) \to \fz,\quad \sigma\mapsto \int_{\sigma}\omega
  \end{equation}
  defined in the previous remark is smooth, where we endow
  $C^{\infty}(\Delta^{(n)},G)$ with the smooth structure from
  \cite{Wockel06Smooth-extensions-and-spaces-of-smooth-and-holomorphic-mappings}.
 \end{proposition}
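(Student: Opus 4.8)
The plan is to factor the integration map as a composition of three maps, each of which is manifestly smooth, and to invoke the exponential law (Cartesian closedness of the relevant category of smooth maps, as developed in \cite{Wockel06Smooth-extensions-and-spaces-of-smooth-and-holomorphic-mappings}) in order to reduce the one genuinely nontrivial smoothness assertion to the smoothness of an evaluation map.

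First I would use the fixed smooth parametrization $q_{n}\from [0,1]^{n}\to \Delta^{(n)}$ to reduce to an integral over the cube. Since precomposition by a fixed smooth map is smooth, the map
\[
 q_{n}^{*}\from C^{\infty}(\Delta^{(n)},G)\to C^{\infty}([0,1]^{n},G),\quad \sigma\mapsto \sigma {\circ} q_{n}
\]
is smooth (its adjoint is $(\sigma,x)\mapsto \ev(\sigma,q_{n}(x))$, a composition of the smooth evaluation with $\id\times q_{n}$), and by definition $\int_{\sigma}\omega=\int_{[0,1]^{n}}(\sigma{\circ} q_{n})^{*}\omega$. Thus it suffices to prove that $\tau\mapsto \int_{[0,1]^{n}}\tau^{*}\omega$ is smooth on $C^{\infty}([0,1]^{n},G)$.

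Second, I would make the integrand explicit and isolate the nonlinear step. Writing
\[
 H\from C^{\infty}([0,1]^{n},G)\to C^{\infty}([0,1]^{n},\fz),\quad H(\tau)(x)=\omega_{\tau(x)}\big(\partial_{x_{n}}\tau(x),\ldots,\partial_{x_{1}}\tau(x)\big),
\]
we have $\int_{[0,1]^{n}}\tau^{*}\omega=\int_{[0,1]^{n}}H(\tau)$. By the exponential law applied to the compact manifold-with-corners $[0,1]^{n}$, the map $H$ is smooth precisely when its adjoint $C^{\infty}([0,1]^{n},G)\times [0,1]^{n}\to\fz$, $(\tau,x)\mapsto H(\tau)(x)$, is smooth. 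This in turn follows from smoothness of the evaluation map $\ev\from C^{\infty}([0,1]^{n},G)\times [0,1]^{n}\to G$: differentiating $\ev$ in the cube variable produces the smooth maps $(\tau,x)\mapsto \partial_{x_{i}}\tau(x)$ with values in $TG$, and feeding the base point $\tau(x)$ together with these tangent vectors into the smooth alternating form $\omega$ yields a smooth $\fz$-valued function. Working in a local chart of $G$, in which $\omega$ becomes a smooth map with values in the space of continuous alternating $n$-linear maps, makes this composition routine. Finally I would note that the integration functional
\[
 \int_{[0,1]^{n}}\from C^{\infty}([0,1]^{n},\fz)\to\fz
\]
is continuous and linear — exactly the content of the iterated-integral construction preceding the proposition, where Mackey-completeness of $\fz$ guarantees both existence of the integral and its continuous dependence on the integrand — and hence smooth. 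Composing $q_{n}^{*}$, $H$ and $\int_{[0,1]^{n}}$ then gives the claim.

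The main obstacle I expect is the joint smoothness in the second step, namely that $(\tau,x)\mapsto \omega_{\tau(x)}(\partial_{x_{n}}\tau(x),\ldots,\partial_{x_{1}}\tau(x))$ is smooth. The delicate point is the correct bookkeeping of the tangent vectors $\partial_{x_{i}}\tau(x)$, which arise by differentiating the evaluation map in the manifold-with-corners variable and live in $TG$, and which then must be fed into the fibrewise-multilinear form $\omega$; once everything is expressed in charts, this reduces to smoothness of evaluation and of ordinary partial differentiation, both of which are available in the framework of \cite{Wockel06Smooth-extensions-and-spaces-of-smooth-and-holomorphic-mappings}.
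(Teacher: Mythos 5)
Your proposal is correct and follows essentially the same three-step factorization as the paper's proof: pull back along the fixed parametrization $q_{n}$, show that the integrand depends smoothly on $\sigma$, and compose with the continuous linear (hence smooth) iterated-integration functional. The only variation is in the middle step, where the paper passes to charts on the mapping space and uses that $\sigma\mapsto d\sigma$ is continuous linear together with smoothness of push-forward along $\omega\in C^{\infty}(\fg\times\fg^{n},\fz)$, whereas you invoke the exponential law and joint smoothness of the evaluation map; these are standard and interchangeable devices in this setting.
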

 
 \begin{proof}
  Since an atlas on $C^{\infty}(\Delta^{(n)},G)$ is given by
  $\gamma\mapsto \varphi \circ (\gamma^{-1}\cdot \gamma')$ for
  $\varphi\from U\to \wt{U}\se \fg$ a chart of $G$ and
  $\gamma '\in \gamma\cdot C^{\infty}(\Delta^{(n)},U)$, it clearly suffices to
  show that the map
  \begin{equation*}
   C^{\infty}(\Delta^{(n)},\fg) \to \fz,\quad \sigma\mapsto \int_{\sigma}\wt{\omega}=\int ...\int\left(\mathop{(\sigma \circ q_{n})^{*}}\wt{\omega}\right)( \partial_{x_{n}},\ldots,\partial_{x_{i}})\mathop{dx_{n}}...\mathop{dx_{1}}
  \end{equation*}
  is smooth for each $\wt{\omega}\in \Omega^{n}(\Delta^{(n)},\fg)$. Since
  $C^{\infty}(\Delta^{(n)},\fg)\ni \sigma \mapsto d \sigma\in C^{\infty}(T \Delta^{(n)},\fg)$
  is linear and continuous (by the definition of the topology on
  $C^{\infty}(\Delta^{(n)},\fg)$), and since evaluation is smooth,
  $d \sigma(Tq_{n}(\partial_{x_{i}}))\in C^{\infty}([0,1]^{n},\fg)$ depends
  smoothly on $\sigma$. By the definition of a smooth $n$-form (cf.\
  \cite[Definition I.4.1]{Neeb06Towards-a-Lie-theory-of-locally-convex-groups}),
  $\omega$ is an element of $C^{\infty}(\fg\times\fg^{n},\fz)$ and since push
  forward is a smooth map on mapping spaces by \cite[Corollary
  29]{Wockel06Smooth-extensions-and-spaces-of-smooth-and-holomorphic-mappings},
  \begin{equation*}
   \wt{\omega}_{q_{n}(\cdot)}(d \sigma(Tq_{n}(\partial_{x_{n}})),...,d \sigma(Tq_{n}(\partial_{x_{n}}))))\in C^{\infty}([0,1]^{n},\fz)
  \end{equation*}
  depends smoothly on $\sigma$.
  
  Now the integration map, sending a smooth curve to its integral, is
  continuous and linear, so in particular smooth. This implies the smoothness
  of \eqref{eqn:integration_is_smooth}.
 \end{proof}
 
 \begin{remark}\label{rem:piece-wise-smooth-maps}
  Suppose $\Sigma=\cup_{i\in I}\tau_{i}$ is a simplicial complex, where
  $\tau_{i}$ are homeomorphic images of $\Delta^{(n_{i})}$ for some
  $n_{i}\in\N_{0}$. We call $\Sigma$ a \emph{$p$-complex}, if $n_{i}=p$ for all
  $i\in I$ and \emph{finite} if $I$ is finite. Unless mentioned otherwise,
  $\Sigma$ will always refer to the simplicial 2-complex from Figure
  \ref{fig:Triangulation}.

  A map $f\from \Sigma\to G$ is called piece-wise smooth if it is continuous
  and $\left.f\right|_{\tau_{i}}$ is smooth for all $i\in I$. The space
  $C^{\infty}_{pw}(\Sigma,G)$ of piece-wise smooth maps is then also a Lie
  group with Lie algebra $C^{\infty}_{pw}(\Sigma,\fg)$, the construction from
  \cite[\S
  4]{Wockel06Smooth-extensions-and-spaces-of-smooth-and-holomorphic-mappings}
  or \cite[\S
  3.2]{Glockner02Lie-group-structures-on-quotient-groups-and-universal-complexifications-for-infinite-dimensional-Lie-groups}
  carries over to this slightly more general situation. The integral of a
  $\fz$-valued $p$-form $\omega$ over a finite $p$-simplex is defined by
  additive extension. Since the restriction of piece-wise smooth maps to the
  simplices is smooth it follows from Proposition
  \ref{prop:smoothness_of_integration} that
  \begin{equation*}
   C^{\infty}_{pw}(\Sigma,G)\to \fz,\quad \sigma\mapsto \sum_{i\in I} \int_{\left.\sigma\right|_{\tau_{i}}}\omega
  \end{equation*}
  is also smooth.
 \end{remark}
 
 \section{Appendix: Lie groupoids and the submersion Grothendieck pretopology on locally
 convex manifolds} \label{sec:grothendieck-pretop} 

\begin{tabsection}
 Our Lie group(oid)s are based on locally convex manifolds. More precisely they
 are group(oid) objects in the category of locally convex manifolds endowed
 with a Grothendieck pretopology that we will explain in this appendix. See
 also
 \cite{Wockel13Infinite-dimensional-and-higher-structures-in-differential-geometry}
 for more details on this.
 
 \begin{definition}
  A \emph {singleton Grothendieck pretopology} $\cT$
  on a category $\cC$ is a collection of morphisms, called covers, subject the
  following three axioms:
  \begin{enumerate}
   \item Isomorphisms are covers.
   \item The composition of two covers is a cover.
   \item If $U\to X$ is a cover and $Y\to X$ is a morphism, then the pull-back
         $Y\times_X  U$ is representable, and the natural morphism
         $Y\times_X U \to Y$ is a cover.
  \end{enumerate}
 \end{definition}
 
 A smooth map $f : M \to N$ of manifolds is a {\em submersion} if for each
 $x\in M$ there exist charts around $x$ and $f(x)$ such that the coordinate
 representation of $f$ is a projection. In this case the pull-back of an
 arbitrary smooth map of a submersion is a manifold and the canonical map
 $Z \times_N M \to  Z$ is again a submersion. Thus the category of such
 manifolds $\cC_0$ has a Grothendieck pretopology $\cT_0$ whose covers are
 surjective submersions \cite[Appendix
 B]{NikolausSachseWockel13A-smooth-model-for-the-string-group}.
 
 A groupoid object in $(\cC, \cT)$ is a groupoid $G_1\rrarrow G_0$ such that
 both $G_1, G_0 \in \cC$, all the structure maps are morphisms in $\cC$, and
 the source and target maps $\bt, \bs$ are covers in $\cT$. When $G_0$ is the
 terminal object $*$ in $\cC$, a groupoid object is a group object. The theory
 of groupoids object, stacks, weak equivalence, and generalized morphisms are
 well known in various categories (see for example \cite{BehrendXu11Differentiable-stacks-and-gerbes,Metzler03Topological-and-Smooth-Stacks,MoerdijkMrcun03Introduction-to-foliations-and-Lie-groupoids,Noohi08Notes-on-2-groupoids-2-groups-and-crossed-modules} and references therein). Such a theory for higher groupoids for general
 $(\cC, \cT)$ is partially developed in
 \cite{Zhu09n-groupoids-and-stacky-groupoids}. Our Lie group(oid) is a
 group(oid) object in $(\cC_{0},\cT_{0})$.
 
 We briefly recall here for 1-groupoids to make our paper self-contained the
 following
 \begin{definition}\label{defequivalence}
  \emptycomment{ A morphism $f:Z\to X$ of groupoid objects in $(\cC, \cT)$ is a
  \emph {hypercover}\footnote{This concept can be extended to higher groupoid
  objects, see \cite{Zhu09n-groupoids-and-stacky-groupoids}. It is motivated
  from the definition of a trivial fibration of Quillen.} if
  \begin{itemize}
   \item the map $Z_0\to X_0$ is a cover in $\cT$;
   \item the natural map from $Z_1$ to the pull-back
         $X_1\times_{X_0     \times X_0} Z_0 \times Z_0$ is an isomorphism.
  \end{itemize}}
  A morphism $f:Z\to X$ of groupoid objects in $(\cC, \cT)$ is a \emph {weak
  equivalence} (see \cite{MoerdijkMrcun03Introduction-to-foliations-and-Lie-groupoids} if
  \begin{itemize}
   \item the map $Z_0\times_{ X_0} X_1 \to X_0$ is a cover in $\cT$;
   \item the natural map from $Z_1$ to the pull-back
         $Z_0\times_{ X_0} X_1 \times_{X_0} Z_0(=X_1\times_{X_0     \times X_0} Z_0 \times Z_0)$
         is an isomorphism.
  \end{itemize}
 \end{definition}
 
 \begin{definition}\label{defi:gen-morp}
  A \emph {generalized morphism} between two groupoid objects $X$ and $Y$ in
  $(\cC, \cT)$ consists of a \emph {span} of morphisms
  $X\stackrel{\sim}{\leftarrow}Z\to Y$, where the map $Z\stackrel{\sim}{\to} X$
  is a weak equivalence.
 \end{definition}
 \emptycomment{
 \begin{remark}
  One may also use weak equivalence instead of a hypercover to form such a span
  to define a generalized morphism. However, there is no essential difference
  between the two methods, that is, the 2-category formed using these two sorts
  of generalized morphisms are equivalent.
 \end{remark}
 } Weak equivalences are in general not invertible as morphisms of groupoid
 objects. One has to take the symmetric closure to come to an equivalence
 relation (due to the axioms of the pretopology we only have to take spans of
 weak equivalences instead of arbitrary zig-zags to achieve this).
 
 \begin{definition}\label{defi:m-equi-2gpd}
  Two groupoid objects $X$ and $Y$ in $(\cC, \cT)$ are \emph {Morita
  equivalent} if there is another groupoid $Z$ in $(\cC, \cT)$ and weak
  equivalences
  $ X\stackrel{\sim}{\leftarrow} Z \stackrel{\sim}{\rightarrow} Y$.
 \end{definition}
 
 A {\em morphism between two generalized morphisms}
 $X\stackrel{\sim}{\leftarrow} Z \to Y$ and
 $X\stackrel{\sim}{\leftarrow} Z' \to Y$ is given by a third zig-zag
 $X\stackrel{\sim}{\leftarrow} Z'' \to Y$ such that $Z''$ also provides a weak
 equivalence $Z\stackrel{\sim}{\leftarrow} Z'' \stackrel{\sim}{\to} Z'$ and all
 triangle diagrams commute up to 2-morphisms of groupoid objects:
 \[
  \xymatrix{ & Z \ar[dl]_\sim \ar[dr] & \\
  X & Z'' \ar[l]_\sim \ar[r] \ar[d] \ar[u] & Y \\ 
  & Z' \ar[ul]^\sim \ar[ur] &}
 \]
 
 The bicategory of Lie groupoids we use in this article has Lie groupoids in
 $(\cC_0, \cT_0)$ as objects, generalized morphisms as 1-morphisms, and
 morphisms between generalized morphisms as 2-morphisms. Morita equivalent Lie
 groupoids one-to-one corresponds presentable stacks in various categories with
 Grothendieck pretopologies (see for example
 \cite{BehrendXu11Differentiable-stacks-and-gerbes,Metzler03Topological-and-Smooth-Stacks,
 Noohi08Notes-on-2-groupoids-2-groups-and-crossed-modules,Pronk96Etendues-and-stacks-as-bicategories-of-fractions}).
 The same technique will apply here for our case of $(\cC_0, \cT_0)$. Thus our
 Lie 2-group is in some sense also a stacky Lie group in the sense of
 \cite{Blohmann08Stacky-Lie-groups}.
\end{tabsection}

\bigskip
\footnotesize
\noindent\textit{Acknowledgments.}
 The authors would like to thank the referees of this paper for helpful
 suggestions that lead to several improvements.

\bibliographystyle{new}
 \def\polhk#1{\setbox0=\hbox{#1}{\ooalign{\hidewidth
  \lower1.5ex\hbox{`}\hidewidth\crcr\unhbox0}}} \def\cprime{$'$}

\end{document}